\theoremstyle{plain}
\newtheorem{theorem}{Theorem}
\newtheorem{prop}{Proposition}
\newtheorem{lemma}{Lemma}
\newtheorem{coro}{Corollary}
\theoremstyle{definition}
\newtheorem{remark}{Remark}
\newcommand{\dd}{\,\mathrm{d}}
\newcommand{\ii}{\ts\mathrm{i}\ts}
\newcommand{\ts}{\hspace{0.5pt}}
\newcommand{\Gam}{\varGamma}
\newcommand{\Lam}{\varLambda}
\newcommand{\ZZ}{\mathbb{Z}}
\newcommand{\RR}{\mathbb{R}\ts}
\newcommand{\CC}{\mathbb{C}\ts}
\newcommand{\QQ}{\mathbb{Q}\ts}
\newcommand{\NN}{\mathbb{N}}
\def\sbe{\subseteq}
\newcommand{\oo}{{\scriptstyle \mathcal{O}}}
\newcommand{\OO}{\mathcal{O}}
\newcommand{\CR}{\mathcal{R}}
\newcommand{\RE}{\mathop{\mathrm{Re}}\nolimits}
\newcommand{\IM}{\mathop{\mathrm{Im}}\nolimits}
\newcommand{\tr}{\mathop{\mathrm{tr}}\nolimits}
\newcommand{\card}{\mathop{\mathrm{card}}\nolimits}
\newcommand{\Fix}{\mathop{\mathrm{Fix}}\nolimits}
\newcommand{\OC}{\mathop{\mathrm{OC}}\nolimits}
\newcommand{\id}{\mathop{\mathrm{id}}\nolimits}
\newcommand{\lcm}{\mathop{\mathrm{lcm}}\nolimits}
\newcommand{\myfrac}[2]{\frac{\raisebox{-2pt}{$#1$}}{\raisebox{0.5pt}{$#2$}}}
\newcommand{\rect}{\raisebox{0.7pt}{$\scriptstyle \sqsubset \!\!
\sqsupset$}}
\newcommand{\nn}{\nonumber}
\begin{document}

\title[Well-rounded sublattices]{Well-rounded sublattices of planar lattices}

\author{Michael Baake}
\address{Fakult\"at f\"ur Mathematik, Universit\"at Bielefeld, 
Box 100131, 33501 Bielefeld, Germany}
\email{$\{$mbaake,pzeiner$\}$@math.uni-bielefeld.de}

\author{Rudolf Scharlau}
\address{Fakult\"at f\"ur Mathematik, 
Technische Universit\"at Dortmund, 44221 Dortmund, Germany}
\email{Rudolf.Scharlau@math.tu-dortmund.de}

\author{Peter Zeiner}


\begin{abstract}
  A lattice in Euclidean $d$-space is called well-rounded if it
  contains $d$ linearly independent vectors of minimal length. This
  class of lattices is important for various questions, including
  sphere packing or homology computations. The task of enumerating
  well-rounded sublattices of a given lattice is of interest already
  in dimension 2, and has recently been treated by several authors.
  In this paper, we analyse the question more closely in the spirit of
  earlier work on similar sublattices and coincidence site
  sublattices. Combining explicit geometric considerations with known
  techniques from the theory of Dirichlet series, we arrive, after a
  considerable amount of computation, at asymptotic results on the
  number of well-rounded sublattices up to a given index in any planar
  lattice.  For the two most symmetric lattices, the square and the
  hexagonal lattice, we present detailed results.
\end{abstract}
  
\maketitle

\section{Introduction}

A lattice in Euclidean space $\RR^d$ is \emph{well-rounded} if the
non-zero lattice vectors of minimal length span $\RR^d$.  Well-rounded
lattices are interesting for several reasons.  First of all, the
concept is put into a broader context by the notion of the
\emph{successive minima} of a lattice (more precisely, of a norm
function on a lattice). By definition, a lattice is well-rounded if
and only if all its $d$ successive minima (norms of successively
shortest linearly independent vectors) are equal to each other.

A first observation is that many important `named' lattices in
higher-dimensional space are well-rounded, such as the Leech lattice,
the Barnes-Wall lattice(s), the Coxeter-Todd lattice, all irreducible
root lattices, and many more~\cite{ConSlo}.  There are essentially two
reasons for this (which often apply both). First of all, distinct
successive minima give rise to proper subspaces of $\RR^d$ that are
invariant under the orthogonal group (automorphism group) of the
lattice. If this finite group acts irreducibly on $\RR^d$, the lattice
must be well-rounded. Secondly, a lattice which gives rise to a
locally densest sphere packing (a so-called extreme lattice), is
well-rounded. It is actually perfect by Voronoi's famous theorem (this
part goes back to Korkine and Zolotareff), and it is easily seen that
perfection implies well-roundedness; compare \cite{Martinet}.

However, these two observations are not at the core of the
notion. They might give the impression that well-rounded lattices are
very rare or special, which is not the case. In terms of Gram matrices
or quadratic forms, the well-rounded ones lie in a subspace of
codimension $d-1$ in the space of all symmetric matrices, similarly
for the cone of positive definite Minkowski-reduced forms. Despite its
codimension, this subspace is large enough so that certain questions
about general forms can be reduced to well-rounded ones. A good
illustration for this is Minkowski's proof of the fact that the
geometric mean of all $d$ successive minima of a lattice is bounded by
the same quantity $\gamma^{}_d \cdot \mathrm{disc}(\Lam)$ as the
first minimum (see Section~\ref{sec:tools}).
Here, $\gamma^{}_d$ is the Hermite constant in
dimension $d$, and for well-rounded lattices this estimate reduces to
the definition of this constant. The proof is obtained by a certain
deformation of the quadratic form; see \cite{vanderWaerden}. A
sharpened version of this technique asks for a diagonal matrix which
transforms a given lattice into a well-rounded one. In general, its
existence is unknown, but C.~McMullen~\cite{McMullen}
recently proved a weaker version
which suffices for applications to Minkowski's conjecture on the
minimum of a (multiplicative) norm function on lattices. The method of
proof is related to applications of well-rounded lattices to
cohomology questions as described in the introduction of~\cite{Kuehn};
compare the references given there.

Having this kind of `richness' of well-rounded lattices in mind, it is
tempting to ask how frequent they are in terms of counting
sublattices. So, the principal object of study in this paper is the
function
\begin{equation}\label{eq:def-of-a}
  a^{}_\Gam (n)\, :=\, \card \{ \Lam \mid \Lam \subseteq
  \Gam \text{ is a well-rounded sublattice with } [\Gam : \Lam]
  = n\},
\end{equation}
where $\Gam$ is an in principle arbitrary lattice, and $[\Gam :
\Lam]$ denotes the index of $\Lam$ in $\Gam$.  This
question is of interest already in dimension $2$ (where some of the
general features described above reduce to rather obvious
facts). Moreover, since the well-rounded sublattices are the objects
of interest, and not so much the enveloping `lattice of reference'
$\Gam$, it seems natural to focus mainly on the two most symmetric
lattices, the hexagonal lattice and the square lattice. In this
paper, we shall obtain complete and explicit results on the asymptotic
number of well-rounded sublattices, as a function of the index, of the
hexagonal lattice and of the square lattice. We also have results for
general $\Gam$ which are somewhat weaker, which seems to be
unavoidable.\medskip

In special situations, lattice enumeration problems have a long
history. The coefficients of the Dedekind zeta functions of an
algebraic number field $K$ of degree $d$ over the rationals count the
number of ideals of given index in the ring of integers $\ZZ_K$, which
is considered as a lattice in a well-known way~\cite{Borevic-Sh}.  The
perhaps most basic result on lattice enumeration, which is also one of
the most frequently rediscovered ones, is the determination of the
number $g(n)$ of all distinct sublattices of index $n$ in a given
lattice $\Gam\subset \RR^d$.  The result follows easily from the
Hermite normal form for integral matrices and reads
\begin{equation}\label{eq:Zn-count}
  g^{}_d(n) \, =\, g(n) \, =\,
   \sum_{m_1\cdot\ldots\cdot m_d=n} m_1^0\cdot m_2^1\cdots m_d^{d-1}
\end{equation}
with Dirichlet series generating function 
\begin{equation}\label{eq:Zn-Diric}
   D^{}_g(s) \, = \, \sum_{n=1}^\infty \frac{g(n)}{n^s}
  \, = \, \zeta(s)\zeta(s-1)\cdots\zeta(s-d+1)
\end{equation}
(compare \cite[p.\ 64]{Shimura}, \cite[p.\ 307]{Sol}, \cite{Lub,Baake97};
for several different proofs, see \cite[Theorem~15.1]{Lub}).  This
result of Eq.~\eqref{eq:Zn-count} is insensitive to any geometric
property of the lattice $\Gam$, in the sense that it is actually a
result for the free Abelian group of rank $d$ and its subgroups. In
\cite{duSautoy, Grune}, extensions to more general classes of finitely
generated groups are treated.

As for lattices, it is natural to refine the question by looking at
classes of sublattices with particular properties (number-theoretic or
geometric), possibly defined by an additional structure on the
enveloping vector space. In addition to the classical case of the
Dedekind zeta function mentioned above, we are aware of only few,
scattered results. Quite a while ago, in \cite{Sol, Bush}, modules in
an order in a semisimple algebra over a number field were
considered. Well-rounded lattices in dimension $2$ have recently been
analysed in \cite{Fuksh1, Fuksh3, Fuksh2, Kuehn}; see also the
references in \cite{Fuksh2}. Together with our earlier work on similar
sublattices \cite{Baake+Grimm,BSZ11} and on coincidence site
sublattices (CSLs) \cite{Baake97,pzcsl2,BGHZ08,zou}, these papers were
our starting point.

One benefit of Dirichlet series is the access to asymptotic results on
the growth of a (non-negative) arithmetical function $f(n)$.  Since
$f$ in general need not behave regularly, in particular need not be
monotone, one usually considers the average growth of $f(n)$, that is,
one studies the summatory function $F(x)=\sum_{n\leq x} f(n)$.  For
the above counting function $g^{}_d(n)$ for sublattices, the summatory
function $G^{}_d(x)$ satisfies
\begin{equation}\label{eq:Zn-asymp}
    G^{}_d(x) \, = \, c x^d + \Delta_d(x) \ts ,
\end{equation}
with $c=1$ for $d=1$ and $c = \frac{1}{d}\prod_{\ell=2}^d \zeta(\ell)$
otherwise, which follows from Eq.~\eqref{eq:Zn-Diric} by applying
Delange's theorem; compare Theorem~\ref{thm:meanvalues} in
Appendix~\ref{app-sec:ana}.  Clearly, $G^{}_1(x) = [x] $, where
$[\cdot]$ denotes the Gauss bracket,
and thus
$\Delta_1(x) = \OO(1)$.  In dimension 2, $G^{}_2 = \sigma_1(n) :=
\sum_{\ell \mid n}\ell$, so we have the well-known asymptotic growth
behaviour of the divisor function, whose error term can be estimated
as \mbox{$\Delta_2(x) = \OO\bigl(x\log (x) \bigr)$};
see~\cite[Thm~3.4]{Apostol}.

One can ask for a more refined description of the asymptotic growth of
an arithmetic function, consisting of a main term for the summatory
function, a term of second order (a `first order error term'), and an
error term of a strictly smaller order of magnitude than the term of
second order.  For instance, for the number of divisors of $n$, it is
known that
\begin{equation}\label{eq:div-asympt}
   \sum_{n\leq x} \sigma^{}_0(n) \, = \, 
   x \log(x) + (2\gamma - 1)\ts x + \OO\bigl(\sqrt{x}\ts \bigr) \ts ,
\end{equation}
where $\gamma$ is the Euler--Mascheroni constant;
compare~\cite{Apostol,Tenenbaum}. So we have a term of second order which
is linear in this case and thus of `almost the same' growth as the
main term, whereas the error term is much smaller. \medskip 

The content of this paper can now be summarised as
follows. In the short preparatory Section~\ref{sec:tools}, we recall a
few facts about reduced bases and Bravais classes of lattices in the
plane, and state some auxiliary remarks about well-rounded
(sub-)lattices.

In Section~\ref{sec:sq}, we begin with an explicit description of all
well-rounded sublattices of the square lattice, the latter viewed as
the ring $\ZZ[\ii]$ of Gaussian integers. After these preparations,
the main result then is Theorem~\ref{thm:sq2}, which gives a refined
asymptotic description of the function $A_\square$, of the kind that
we have explained above for the divisor function in
Eq.~\eqref{eq:div-asympt}; the constants for the main term and the
term of second order are determined explicitly.  The proof relies on
classic methods from analytic number theory, including Delange's
theorem and some elementary tools around Euler's summation formula and
Dirichlet's hyperbola method.  We describe the strategy and the main
steps of the proof; some of the details, which are long and technical,
have been transferred to a supplement to this paper.  A weaker result,
namely the explicit asymptotics without the second-order term, is
stated in Theorem~\ref{thm:sq1}, which is fully proved here.

Section~\ref{sec:tr} provides the analogous analysis for the hexagonal
lattice, realised as the ring of Eisenstein integers $\ZZ[\rho]$ with
$\rho=e^{2\pi \ii /3}$; Theorems~\ref{thm:tri1} and~\ref{thm:tri2} are
completely analogous to Theorems~\ref{thm:sq1} and~\ref{thm:sq2}.
\medskip

The general case of well-rounded sublattices of two-dimensional case
is treated in Section~\ref{sec:gen}, which is subdivided into two
parts.  The first one starts with a criterion for the existence of
well-rounded sublattices. The lattices that have a well-rounded
sublattice include all `rational' lattices, that is, lattices whose
Gram matrix consists of rational numbers (or even rational integers),
up to a common multiple. So these are exactly the lattices that
correspond to integral quadratic forms in the classical sense.  There
is an interesting connection between well-rounded sublattices and
CSLs, which is established in Lemma~\ref{lem:BRS}.  In the rest of
this part, it is shown in Theorem~\ref{thm:non-rat-growth} that all
non-rational lattices that contain well-rounded sublattices have
essentially the same power-law growth (linear) of their average number
$A_\Gam (x)$. The second part of Section~\ref{sec:gen} deals with the
behaviour of $A_\Gam (x)$ in the general rational case. The discussion
is more complicated, but nevertheless we can show that the growth rate
is proportional to $x \log (x)$, as in the square and hexagonal case.
Summarising, we see that three regimes exist as follows: A planar
lattice can have many, some or no well-rounded sublattices, the first
case is exactly the rational case, while the second case is explained
by the existence of an essentially unique coincidence
reflection.\medskip

Our paper is complemented by four appendices. In
Appendix~\ref{app-sec:ana}, some classic results about Dirichlet
series are collected in a way that suits our needs. In
Appendix~\ref{app-sec:asym}, we explicitly record the asymptotic
behaviour of the number of similar sublattices of the square and the
hexagonal lattice, which are a useful by-product of Sections~\ref{sec:sq}
and~\ref{sec:tr}. Appendix~\ref{app-sec:index} summarises
key properties of a special type of sublattices that we need,
while Appendix~\ref{app-sec:epst} recalls some facts about Epstein's
zeta functions.

\section{Tools from the geometry of planar lattices}\label{sec:tools}

Let us collect some simple, but useful facts from the geometric theory
of lattices.  We assume throughout this paper that we are in dimension
$d=2$, so we consider an arbitrary lattice $\Lam$ in the Euclidean
plane. Let $v \in \Lam$ be a shortest non-zero vector, and $w \in
\Lam$ shortest among the lattice vectors linearly independent from
$v$. Then $v, w$ form a basis of $\Lam$.  (The reader may consult
\cite[Chapter 2, \S 7.7]{Borevic-Sh} for this and for related statements
below.)  Changing the sign of $w$ if necessary, we may assume that the
inner product satisfies $(v,w ) \ge 0$. A basis of this kind is called
a \emph{reduced basis} of $\Lam$.  By definition, we have the
following chain of inequalities,
\begin{equation}\label{eq:red-basis}
  \left|v\right| \, \le \, \left|w\right|
  \, \le \, \left|v-w\right| \, \le \, \left|v+w\right| \ts .
\end{equation}
In terms of the quantities $a:= |v|^2, c:=|w|^2$, and $b:= ( v,w)$,
which are the entries of the Gram matrix $\left(\begin{smallmatrix} a
    & b \\ b & c \end{smallmatrix}\right)$ with respect to $v, w$,
these conditions read
\begin{equation}\label{eq:red-gram}
  0 \, \le \, 2b \, \le \, a \, \le \, c \ts .
\end{equation} 
Conversely, if we start with any two linearly independent vectors
$v,w$ satisfying Eqs.~(\ref{eq:red-basis}) or (\ref{eq:red-gram}),
then $v,w$ form a reduced basis of the lattice that they generate.
Concerning the reduction conditions (\ref{eq:red-basis}), there are
six cases possible for the pair $v, w$ as follows,
\begin{align*}
  \text{(a)}\qquad
  & \left|v\right| < \left|w\right| < \left|v-w\right| <
  \left|v+w\right|, \quad ( v,w ) > 0 & \qquad 
  \text{general type} \\
  \text{(b)}\qquad
  & \left|v\right| < \left|w\right| < \left|v-w\right| =
  \left|v+w\right|, \quad ( v,w ) = 0 & \qquad 
  \text{rectangular type} \\
  \text{(c)}\qquad
  & \left|v\right| < \left|w\right| = \left|v-w\right| <
  \left|v+w\right|, \quad ( v,w ) > 0 & \qquad
  \text{centred rectangular type} \\
  \text{(d)}\qquad
  & \left|v\right| = \left|w\right| < \left|v-w\right| <
  \left|v+w\right|, \quad ( v,w ) > 0 & \qquad 
  \text{rhombic type} \\
  \text{(e)}\qquad
  & \left|v\right| = \left|w\right| < \left|v-w\right| =
  \left|v+w\right|, \quad ( v,w ) = 0 & \qquad 
  \text{square type} \\
  \text{(f)}\qquad
  & \left|v\right| = \left|w\right| = \left|v-w\right| <
  \left|v+w\right|, \quad ( v,w ) > 0 & \qquad 
  \text{hexagonal type} 
\end{align*}
It is well-known and easily shown that the entries $a,b,c$ of the Gram
matrix with respect to a reduced basis $v, w$, only depend on the
lattice, but not on the choice of the reduced basis $v,w$.  Therefore,
it is well-defined to talk about the \emph{geometric type} of the
lattice, which is one of the types (a) to (f) above.  As a further
consequence of this uniqueness property, the orthogonal group $\mathrm
O (\Lam)$ acts transitively (and thus sharply transitively) on the set
of all (ordered) reduced bases of $\Lam$. (By definition, $\mathrm O
(\Lam)$ is the set of orthogonal transformations of the enveloping
vector space which maps the lattice into, and thus onto itself.)
$\mathrm O (\Lam)$ is cyclic of order 2 for lattices of general type,
a dihedral group of order $4$ (generated by two perpendicular
reflections) for the types (b), (c) and (d), a dihedral group of order
$8$ for the square lattice, and of order $12$ for the hexagonal
lattice.

Typically, one wants to classify lattices only up to similarity, which
means that the Gram matrix may be multiplied with a positive
constant. Clearly, a square or hexagonal lattice is unique up to
similarity. Similarity classes of rhombic type depend on one
parameter, the angle $\alpha$ formed by $v$ and $w$, where $\pi/3 <
\alpha < \pi /2$.  The limiting cases $\alpha = \pi / 3$ and $\alpha =
\pi / 2$ lead to the hexagonal, respectively square lattice. \medskip

A lattice $\Lam$ (in any dimension) is called \emph{rational} if its
similarity class contains a lattice with rational Gram matrix.  The
\emph{discriminant} $\mathrm{disc}(\Lam)$
of a lattice $\Lam$ is the determinant of any of
its Gram matrices. (This is the square of the volume of a fundamental
domain for the action of $\Lam$ by translations.)

Two lattices $\Gam, \Lam$ (on the same space) are called
\emph{commensurate} (or commensurable) if their intersection $\Gam
\cap \Lam$ has finite index in both. Equivalently, there exists a
non-zero integer $a$ such that $a \Gam \subseteq \Lam \subseteq a^{-1}
\Gam$. This in turn is equivalent to the condition that $\Gam $ and
$\Lam$ generate the same space over the rationals, $\QQ \Gam = \QQ
\Lam$. If $\Gam$ and $\Lam$ are commensurate, the ratio of their
discriminants is a rational square.

A \emph{coincidence isometry} for $\Lam$ is an isometry (an orthogonal
transformation $R$ of the underlying real space) such that $\Lam$ and
$R \Lam$ are commensurate. In earlier work~\cite{Baake97}, we have
introduced the notation $\OC (\Lam)$ for the set of all coincidence
isometries for $\Lam$. If $R \in \OC (\Lam)$, it follows that $R\QQ
\Lam = \QQ R \Lam = \QQ \Lam$ (see above), i.e.\ $R$ induces an
orthogonal transformation of the rational space $\QQ
\Lam$. Conversely, any such orthogonal transformation maps $\Lam$ onto
a lattice of full rank in the same rational space, which, by the above
remarks, is commensurate with $\Lam$. Altogether, $\OC (\Lam)$ is
equal to the rational orthogonal group $\mathrm O (\QQ \Lam)$ (in
particular, it is a group). If $\Gam$ and $\Lam$ are commensurate,
their groups of coincidence isometries coincide, 
\[
   \OC (\Gam) \, = \, \mathrm{O} (\QQ \Gam)
   \, = \, \mathrm{O} (\QQ \Lam) \, = \, \OC (\Lam)\ts .
\]  
A \emph{coincidence site lattice} (CSL) for $\Lam$ is a sublattice of
the form $\Lam \cap R\Lam$ with $R \in \OC (\Lam)$; see \cite{Baake97}
for further motivation concerning this notion.

Geometric types as introduced above are closely related, but not
identical, with the so-called \emph{Bravais types} of lattices, which
are defined in any dimension.  Two lattices $\Gam$ and $\Lam$ are
Bravais equivalent if and only if there exists a linear transformation
which maps $\Gam$ onto $\Lam$ and also conjugates $\mathrm O (\Gam)$
into $\mathrm O (\Lam)$.  The Bravais type (or Bravais class) of a
lattice depends only on its geometric type; the centred rectangular
and the rhombic lattices belong to the same Bravais type (thus we call
them \mbox{rhombic-cr} lattices).  Otherwise, geometric types and
Bravais types (or rather the respective equivalence classes of
lattices) coincide.\medskip

Let us return to well-rounded lattices.  Clearly, a planar lattice is
well-rounded if and only if it is of rhombic, square or hexagonal
type.  Any \mbox{rhombic-cr} lattice contains a rectangular sublattice
of index $2$.  In fact, if $v$ and $w$ form a reduced basis, then
$v-w$ and $v+w$ are orthogonal, and form a reduced basis of the
desired sublattice. Conversely, if $v,w$ is a reduced basis of a
rectangular lattice, and if we further assume that $|w^2|=c < 3a =
3|v|^2$, then $v+w$ and $-v+w$ form a reduced basis of a rhombic
sublattice of index $2$. (If $c=3a$, this sublattice is hexagonal,
whereas for $c>3a$, we have $|2v|< |\pm v + w|$, and thus the vectors
are not shortest any more; in this case, the sublattice is centred
rectangular.)

Similarly, a hexagonal lattice contains a rectangular sublattice of
index $2$, or more precisely, it contains exactly three rectangular
sublattices of index $2$ for symmetry reasons.  Analogously, the
square lattice contains precisely one square sublattice of index $2$.

\section{Well-rounded sublattices of $\ZZ[\ii ]$}\label{sec:sq}

We use the Gaussian integers as a representation of the square
lattice.  Note that there is no hexagonal sublattice of $\ZZ[\ii ]$
(consider the discriminant).  Hence, all well-rounded sublattices are
either rhombic or square lattices, which we treat separately, in line
with the geometric classification explained above.

A fundamental quantity that will appear frequently below is the
Dirichlet series generating function for the number of similar
sublattices of $\ZZ[\ii]$, compare \cite{Baake+Grimm,BSZ11}, which is
equal to the Dedekind zeta function of the quadratic field $\QQ(\ii)$,
\begin{equation}\label{eq:Dedekind-square}
   \Phi^{}_{\square} (s) \, = \, \zeta^{}_{\QQ (\ii)} (s)
   \, = \, \zeta (s) \ts L (s,\chi^{}_{-4}) \, \ts .
\end{equation}
Here, $\zeta (s)$ is Riemann's zeta function, and $L(s,\chi^{}_{-4})$
is the $L$-series corresponding to the Dirichlet character
$\chi^{}_{-4}$ defined by
\[
    \chi^{}_{-4} (n) \, = \,
    \begin{cases} 0 , & \text{if $n$ is even, } \\
    1 , & \text{if $n\equiv 1 \bmod 4$, } \\
    -1 , & \text{if $n\equiv 3 \bmod 4$;}
    \end{cases}
\] 
see \cite{Baake97, BSZ11, Zagier} and Appendix~\ref{app-sec:ana}.

Before dealing with the well-rounded sublattices, let us consider all
\mbox{rhombic-cr} and square sublattices of $\ZZ[\ii ]$ (recall that
the term `\mbox{rhombic-cr}' means rhombic or centred rectangular).
Let $z_1, z_2 \in \ZZ[\ii ]$ be any two elements of equal norm. The
sublattice $\Gam=\langle z_1,z_2\rangle_\ZZ$ is of rhombic or centred
rectangular or square type, and every \mbox{rhombic-cr} or square
sublattice is obtained in this way (see Section 2).  We can write
$z_1+z_2$ and $z_1-z_2$ as $z_1+z_2=pz$ and $z_1-z_2=\ii q z$ where
$p,q$ are integers and $z$ is primitive, which means that $\RE(z)$ and
$\IM(z)$ are relatively prime. W.l.o.g., we may assume that $p$ and
$q$ are positive (interchange $z_1$ and $z_2$ if necessary).  Thus
$\Gam=\langle z_1,z_2\rangle_\ZZ= \langle \frac{p+\ii q}{2}z,
\frac{p-\ii q}{2}z\rangle_\ZZ$ is a sublattice of $\ZZ[\ii]$ of index
$\frac{1}{2}pq|z|^2$.  The lattice $\Gam$ is a square lattice if and
only if $p=q$.  Determining the number of \mbox{rhombic-cr} and square
sublattices is thus equivalent to finding all rectangular and square
sublattices of $\ZZ[\ii ]$ with the additional constraint that
$(p+q\ii)z$ is divisible by $2$.

We distinguish two cases (note that $z$ is primitive, hence, in particular,
not divisible by $2$, and thus
$p$ and $q$ must have the same parity), which we call `rectangular'
and `rhombic case' for reasons that will become clear later.
\begin{enumerate}
\item \label{cases:`rect'} `rectangular' case: $z$ is not divisible by
  $1+\ii$, hence $p$ and $q$ must be even.  We write $p=2p',
  q=2q'$. The index is even since it is given by $2p'q'|z|^2$.  Note
  that $p',q'$ may take any positive integral value, even or odd.
\item \label{cases:`rhomb'}`rhombic' case: $z$ is divisible by
  $1+\ii$. We write $z=(1+\ii)w$.
    \begin{enumerate}
    \item \label{cases:`rhomb':even} If $p$ and $q$ are both even, we
      again write $p=2p', q=2q'$.  The index is divisible by 4 since
      it is given by $4p'q'|w|^2$.  Note that $p',q'$ may take any
      positive integral value, even or odd.
    \item \label{cases:`rhomb':odd} If $p$ and $q$ are both odd, the
      index is odd and given by $pq|w|^2$.
    \end{enumerate}
\end{enumerate}
For fixed $z$, interchanging $p\neq q$ gives a \mbox{rhombic-cr} (and
rectangular) lattice which is rotated through an angle
$\frac{\pi}{2}$, hence we count no lattice twice if we let $p,q$ run
over all positive integers.

Let $\Phi_{\text{\rm even}}(s)$ be the Dirichlet series for the number
of \mbox{rhombic-cr} and square sublattices of even index. This comprises the
cases~(\ref{cases:`rect'}) and (\ref{cases:`rhomb':even}).  As $p',q'$
run over all positive integers, they each contribute a factor of
$\zeta(s)$, and since $z$ is primitive, this gives the factor
$\Phi^{\mathsf{pr}}_{\square}(s)$, where
$\Phi^{\mathsf{pr}}_{\square}(s)$ is the Dirichlet series generating
function of primitive similar sublattices of $\ZZ[\ii]$.
The additional factor of $2$
in the index formula gives a contribution of $2^{-s}$, and combining
all these factors finally yields
\begin{equation}
   \Phi_{\text{\rm even}}(s) \, = \,
   \frac{1}{2^s}\, \zeta(s)^2
   \, \Phi^{\mathsf{pr}}_{\square}(s).
\end{equation}
It remains to calculate the number of \mbox{rhombic-cr} and square
sublattices of odd
index, with generating function $\Phi_{\text{\rm odd}}(s)$. Here, $p$ and $q$ run
over all odd positive integers and hence each contribute a factor of
$(1-2^{-s})\zeta(s)$, whereas $w$ runs over all primitive $w$ with
$|w|^2$ odd, and hence gives the contribution
$\frac{1}{1+2^{-s}}\Phi^{\mathsf{pr}}_{\square}(s)$, so that we have
\begin{equation}
   \Phi_{\text{\rm odd}}(s) \, = \,
   \frac{(1-2^{-s})^2}{1+2^{-s}}
   \, \zeta(s)^2 \, \Phi^{\mathsf{pr}}_{\square}(s) \ts .
\end{equation}
In total, the generating function $\Phi_{\lozenge+\square}(s)$ for the number of
all \mbox{rhombic-cr} and square sublattices is given by
\begin{equation}
   \Phi_{\lozenge+\square}(s) \, = \, 
   \Phi_{\text{\rm even}}(s)+\Phi_{\text{\rm odd}}(s) \, = \,
   \frac{1-2^{-s}+2^{-2s+1}}{1+2^{-s}}\,
   \zeta(s)^2 \, \Phi^{\mathsf{pr}}_{\square}(s) \ts .
\end{equation}
Via standard arguments involving Moebius inversion (see \cite{BSZ11} 
and references therein),
the number of \emph{primitive} \mbox{rhombic-cr} and square
sublattices together is given by
\begin{equation}
   \Phi_{\lozenge+\square}^{\mathsf{pr}}(s) \, = \, 
   \frac{1}{\zeta(2s)}\, \Phi_{\lozenge+\square}(s) \, = \,
   \frac{1-2^{-s}+2^{-2s+1}}{1+2^{-s}}\,
   \frac{\zeta(s)^2}{\zeta(2s)}\,
   \Phi^{\mathsf{pr}}_{\square}(s).
\end{equation}
Putting all this together, we obtain the generating functions
$\Phi^{\mathsf{pr}}_{\square}$,  $\Phi_{\lozenge}^{\mathsf{pr}}$ and 
$\Phi_{\rect}^{\mathsf{pr}}$ for the number of primitive square, 
\mbox{rhombic-cr} and rectangular sublattices, respectively, as
\begin{align}
  \Phi^{\mathsf{pr}}_{\square}(s)
     &\, = \, (1+2^{-s})\prod_{p\equiv 1(4)}\frac{1+p^{-s}}{1-p^{-s}}
      \, = \, \frac{\zeta(s) \ts L(s,\chi^{}_{-4})}{\zeta(2s)} 
      \ts , \\[1mm]
  \Phi_{\lozenge}^{\mathsf{pr}}(s)
     &\, = \, \left(\frac{1-2^{-s}+2^{-2s+1}}{1+2^{-s}}\,
       \frac{\zeta(s)^2}{\zeta(2s)}-1\right)
       \Phi^{\mathsf{pr}}_{\square}(s) \ts , \\[3mm]
  \Phi_{\rect}^{\mathsf{pr}}(s)
     &\, = \, \left(\frac{\zeta(s)^2}{\zeta(2s)}-1\right)
        \Phi^{\mathsf{pr}}_{\square}(s) \ts ,
\end{align}
with the $L$-series and the character $\chi^{}_{-4}$ from above (see
Appendix~\ref{app-sec:ana} for details and notation).  Note that the
last equation follows from the fact that the generating function for
all rectangular lattices including the square lattices is given by
$\zeta(s)^2 \Phi^{\mathsf{pr}}_{\square}(s)$.

Let us return to the well-rounded sublattices. Since $z_1$ and $z_2$
are shortest (non-zero) vectors, we have $|z_1\pm z_2|^2\geq|z_1|^2
=|z_2|^2$, which is equivalent to $\min(p^2,q^2)\geq
\frac{p^2+q^2}{4}$, which in turn is equivalent to $3p^2\geq q^2\geq
\frac{1}{3}p^2$. Note that this condition is also sufficient.  Hence,
we have to apply this extra condition to our considerations from
above.  We distinguish two cases:
\begin{enumerate}
\item $p$ and $q$ are both even, $\sqrt{3}p\geq q\geq \frac{1}{\sqrt{3}}p$,
  and $z$ may or may not be divisible by $1+\ii$. We write $p=2p', q=2q'$,
  for which we likewise have $\sqrt{3}p'\geq q'\geq \frac{1}{\sqrt{3}}p'$.
  The index is even since it is given by
  $2p'q'|z|^2$.  Here, $p'$ and $q'$ may take any positive integral values,
  even or odd, which satisfy $\sqrt{3}p'\geq q'\geq
  \frac{1}{\sqrt{3}}p'$. This corresponds to
  $\mathcal{E},\mathcal{E}'$ in Eqs.~(29) and (31) of~\cite{Fuksh1}.
\item $p$ and $q$ are both odd, $\sqrt{3}p\geq q\geq \frac{1}{\sqrt{3}}p$,
  and $z$ is divisible by $1+\ii$. We write $z=(1+\ii)w$.  The index is odd
  and given by $pq|w|^2$. This corresponds to
  $\mathcal{O},\mathcal{O}'$ in Eqs.~(30) and (32) of \cite{Fuksh1}.
\end{enumerate}
The set of all possible indices of well-rounded sublattices is thus given by
(we may interchange $p$ and $q$ if necessary)
\begin{equation}\label{eq:sq-indices}
  \bigl\{ 2pq|z|^2 \, \big| \, q\leq p\leq \sqrt{3}q, z\in\ZZ[\ii] \bigr\}
  \, \cup \, \bigl\{ pq|z|^2 \, \big| \,  q\leq p\leq \sqrt{3}q, 
  z\in\ZZ[\ii], \ts 2\nmid pq|z|^2 \bigr\}
\end{equation}
Note that this set is a proper subset of
Fukshansky's~\cite[Thm~1.2, Thm~3.6]{Fuksh1} index set 
\begin{equation}
   \mathcal{D} \, := \, \bigr\{ pq|z|^2 \, \big| \, q\leq p\leq 
             \sqrt{3}q, z\in\ZZ[\ii] \bigl\}
\end{equation}
since $6=2\cdot 3\cdot |1|^2\in \mathcal{D}$, but $6$ is not contained
in the set (\ref{eq:sq-indices}).

The Dirichlet series generating function for the well-rounded
sublattices may now be calculated as above by taking the condition
$\sqrt{3}p\geq q\geq \frac{1}{\sqrt{3}}p$ into account, so that the
generating Dirichlet series for the well-rounded sublattices of even
index is given by
 \begin{equation}\label{eq:sq-even0}
   \frac{1}{2^s}  \sum_{p\in\NN}\,
   \sum_{\frac{1}{\sqrt{3}} p < q < \sqrt{3}p}
   \frac{1}{p^s q^s} \, \Phi^{\mathsf{pr}}_{\square}(s) \ts .
\end{equation}
Clearly, this sum is symmetric in $p$ and $q$, and comprises the
similar sublattices. In fact, if we exclude the square sublattices
(those lattices with $p=q$) from Eq.~\eqref{eq:sq-even0} and note that
$\sum_{p\in\NN}\sum_{\frac{1}{\sqrt{3}} p < q <
  p}=\sum_{q\in\NN}\sum_{q< p <\sqrt{3}q}$, we obtain the generating
function for the rhombic lattices with even index as
\begin{equation}\label{eq:sq-even}
   \Phi_{\mathsf{wr},\text{\rm even}}(s)\, = \, \frac{2}{2^s}
   \sum_{p\in\NN}\, \sum_{p < q < \sqrt{3}p}\frac{1}{p^s q^s}
   \, \Phi^{\mathsf{pr}}_{\square}(s) \ts .
\end{equation}
The case of odd indices is slightly more cumbersome. Here, we have to
replace the factor $(1-2^{-s})^2\zeta(s)^2$ by the corresponding sum
over all odd integers with $p < q < \sqrt{3}p$. Writing
$p=2k+1$ and $q=2\ell+1$, our condition reads $k < \ell <
\sqrt{3}k+\frac{\sqrt{3}-1}{2}$. Since this inequality has no integral
solution for $k=0$, we may start our sum with $k=1$, and finally
arrive at
\begin{equation}\label{eq:sq-odd}
  \Phi_{\mathsf{wr}, \text{\rm odd}}(s) \, = \,
  \frac{2}{1+2^{-s}}\, \Phi^{\mathsf{pr}}_{\square}(s)
  \sum_{k\in\NN}\, \sum_{k < \ell < \sqrt{3}k+\frac{\sqrt{3}-1}{2}}
  \frac{1}{(2k+1)^s (2\ell+1)^s} \ts .
\end{equation}
Now, $\Phi_{\mathsf{wr}, \text{\rm even}}(s)+\Phi_{\mathsf{wr},
  \text{\rm odd}}(s)+\Phi^{}_{\square}(s)$ gives the Dirichlet series
generating function $\Phi^{}_{\square,\mathsf{wr}}(s)$ for the
arithmetic function $a^{}_{\square} (n)$ of well-rounded sublattices
of $\ZZ[\ii]$ of index $n$.  To get a better understanding of it, we
`sandwich' it, on the half-axis $s>1$, between two explicitly known
meromorphic functions.  All these Dirichlet series satisfy the
conditions of Theorem~\ref{thm:meanvalues} (see
Appendix~\ref{app-sec:ana}).  This gives a result on the asymptotic
growth and its error as follows.
\begin{theorem}\label{thm:sq1}
  Let\/ $a^{}_{\square} (n)$ be the number of well-rounded sublattices
  of index\/ $n$ in the square lattice, and\/
  $\Phi^{}_{\square,\mathsf{wr}}(s) =\sum_{n=1}^\infty
  a^{}_{\square}(n)n^{-s}$ the corresponding Dirichlet series
  generating function. The latter is given by
\[
   \Phi^{}_{\square,\mathsf{wr}}(s) \, = \,
   \Phi^{}_{\square} (s) + \Phi^{}_{\mathsf{wr},\text{\rm even}} (s) +
   \Phi^{}_{\mathsf{wr},\text{\rm odd}} (s)
\]
via Eqs.~\eqref{eq:Dedekind-square}, \eqref{eq:sq-even}
and~\eqref{eq:sq-odd}.  The generating function\/
$\Phi^{}_{\square,\mathsf{wr}}$ is meromorphic in the half plane
\mbox{$\{\RE(s) > \frac{1}{2}\}$}, with a pole of order $2$ at $s=1$,
and no other pole in the half plane \mbox{$\{\RE(s) \ge 1\}$}.

If $s>1$, we have the inequality
\[
    D^{}_{\square} (s) - \Phi^{}_{\square} (s) \, < \, 
    \Phi^{}_{\square,\mathsf{wr}}(s) \, < \,
    D^{}_{\square} (s) + \Phi^{}_{\square} (s) \ts ,  
\]
with\/ $\Phi^{}_{\square} (s)$ from Eq.~\eqref{eq:Dedekind-square} and the
function
\[
    D^{}_{\square} (s) \, = \,
    \myfrac{2 + 2^{s}}{1 + 2^{s}} \,
    \frac{1-\sqrt{3}^{1-s}}{s-1} \,
    \frac{L(s,\chi^{}_{-4})}{\zeta (2s)} 
    \, \zeta (s)\ts \zeta (2s-1) \ts . 
\]
As a consequence, the summatory function\/ $A^{}_{\square} (x) =
\sum_{n\leq x} a^{}_{\square} (n)$ possesses the asymptotic growth
behaviour
\[
    A^{}_{\square} (x) \, = \, \myfrac{\log(3) }{2\ts \pi} \,
      x \log(x)  + \oo\bigl(x \log(x)\bigr) ,
   \quad \text{as } x \to\infty\ts .
\]
\end{theorem}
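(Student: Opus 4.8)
Starting from the decomposition $\Phi^{}_{\square,\mathsf{wr}} = \Phi^{}_{\square} + \Phi^{}_{\mathsf{wr},\text{\rm even}} + \Phi^{}_{\mathsf{wr},\text{\rm odd}}$ already in hand, with the last two summands given by the wedge sums in Eqs.~\eqref{eq:sq-even} and~\eqref{eq:sq-odd}, the plan is to isolate the dominant part by replacing, for each outer index, the inner sum over the wedge by the corresponding integral. For the even part this is $\int_p^{\sqrt 3\ts p} t^{-s}\dd t = \frac{1-\sqrt3^{\ts1-s}}{s-1}\ts p^{1-s}$, after which the outer sum over all $p\in\NN$ collapses to $\zeta(2s-1)$; for the odd part the offset $\frac{\sqrt3-1}{2}$ is chosen precisely so that the substitution $u=2\ell+1$ turns the wedge into $2k+1 < u < \sqrt3\ts(2k+1)$, producing $\frac{1-\sqrt3^{\ts1-s}}{2(s-1)}(2k+1)^{1-s}$ and, after summation over $k\in\NN$, the factor $(1-2^{\ts1-2s})\zeta(2s-1)-1$. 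Using the elementary identities $\frac{1-2^{-2s}}{1+2^{-s}} = 1-2^{-s}$ and $2^{\ts1-s}+\frac{1-2^{\ts1-2s}}{1+2^{-s}} = \frac{2+2^{s}}{1+2^{s}}$ together with $\Phi^{\mathsf{pr}}_{\square}(s) = \zeta(s)L(s,\chi^{}_{-4})/\zeta(2s)$, one checks that this integral approximation equals $D^{}_{\square}(s) - R(s)$, where $R(s) = \frac{1-\sqrt3^{\ts1-s}}{(s-1)(1+2^{-s})}\ts\Phi^{\mathsf{pr}}_{\square}(s)$. By Euler's summation formula, the difference between the true wedge sums and this integral approximation is a series that, against the outer weights, converges absolutely for $\RE(s) > \frac12$; as $R(s)$ and $\Phi^{}_{\square}(s) = \zeta(s)L(s,\chi^{}_{-4})$ are meromorphic with only a simple pole at $s=1$, while $L(s,\chi^{}_{-4})$ and $\frac{1-\sqrt3^{\ts1-s}}{s-1}$ are entire, $1/\zeta(2s)$ is holomorphic on $\{\RE(s)\ge\frac12\}$ (no zero of $\zeta$ has real part $\ge 1$), and $\zeta(s),\zeta(2s-1)$ contribute simple poles only at $s=1$, this yields the asserted meromorphic continuation of $\Phi^{}_{\square,\mathsf{wr}}$ to $\{\RE(s)>\frac12\}$ with a single, second-order pole at $s=1$ and no further pole in $\{\RE(s)\ge1\}$, the leading Laurent coefficient being that of $D^{}_{\square}$.

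\textbf{The sandwich.} Work on the real half-axis $s>1$, where $q^{-s}$ is positive and strictly decreasing. From $q^{-s} < \int_{q-1}^{q} t^{-s}\dd t$ and $\lfloor\sqrt3\ts p\rfloor < \sqrt3\ts p$ one gets $\sum_{p<q<\sqrt3 p} q^{-s} < \int_{p}^{\sqrt3 p} t^{-s}\dd t$, and likewise for the odd wedge; hence $\Phi^{}_{\mathsf{wr},\text{\rm even}}(s) + \Phi^{}_{\mathsf{wr},\text{\rm odd}}(s) < D^{}_{\square}(s) - R(s) < D^{}_{\square}(s)$, and adding $\Phi^{}_{\square}(s) > 0$ gives the upper bound $\Phi^{}_{\square,\mathsf{wr}}(s) < D^{}_{\square}(s) + \Phi^{}_{\square}(s)$. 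In the opposite direction, $q^{-s} > \int_{q}^{q+1} t^{-s}\dd t$ telescopes to $\int_{p}^{\sqrt3 p} t^{-s}\dd t - \sum_{p<q<\sqrt3 p} q^{-s} < p^{-s}$ (respectively $< (2k+1)^{-s}$ for the odd wedge); summing these defects against the outer weights and invoking $\zeta(2s)\ts\Phi^{\mathsf{pr}}_{\square}(s) = \Phi^{}_{\square}(s)$ together with the two identities above (in particular $2^{\ts1-s} + 2(1-2^{-s}) = 2$), the total loss is at most $2\ts\Phi^{}_{\square}(s) - \frac{2}{1+2^{-s}}\ts\Phi^{\mathsf{pr}}_{\square}(s)$. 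Therefore $\Phi^{}_{\square,\mathsf{wr}}(s) \ge D^{}_{\square}(s) - \Phi^{}_{\square}(s) - R(s) + \frac{2}{1+2^{-s}}\ts\Phi^{\mathsf{pr}}_{\square}(s)$, and since $R(s) < \frac{2}{1+2^{-s}}\ts\Phi^{\mathsf{pr}}_{\square}(s)$ — because $\frac{1-\sqrt3^{\ts1-s}}{s-1} < \frac12\log(3) < 2$ on $(1,\infty)$, the map $u\mapsto(1-3^{-u/2})/u$ being decreasing — the lower bound $\Phi^{}_{\square,\mathsf{wr}}(s) > D^{}_{\square}(s) - \Phi^{}_{\square}(s)$ follows.

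\textbf{The asymptotics.} The pole of $D^{}_{\square}$ at $s=1$ is of order two, coming from $\zeta(s)$ and $\zeta(2s-1)$, with leading Laurent coefficient
\[
   \lim_{s\to1}(s-1)^2 D^{}_{\square}(s)
   \, = \, \frac{4}{3}\cdot\frac{\log(3)}{2}\cdot
   \frac{L(1,\chi^{}_{-4})}{\zeta(2)}\cdot\frac12
   \, = \, \frac{4}{3}\cdot\frac{\log(3)}{2}\cdot\frac{3}{2\ts\pi}\cdot\frac12
   \, = \, \frac{\log(3)}{2\ts\pi}\ts ,
\]
where $\frac43$ is the value of $\frac{2+2^{s}}{1+2^{s}}$ at $s=1$, $\lim_{s\to1}\frac{1-\sqrt3^{\ts1-s}}{s-1} = \log\sqrt3 = \frac12\log(3)$, $L(1,\chi^{}_{-4}) = \frac{\pi}{4}$, $\zeta(2) = \frac{\pi^2}{6}$ and $\lim_{s\to1}(s-1)\zeta(2s-1) = \frac12$. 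By the first paragraph, $\Phi^{}_{\square,\mathsf{wr}}$ therefore has a pole of order exactly two at $s=1$ with this same leading coefficient $\frac{\log(3)}{2\pi}$ (the remaining constituents contributing at most a simple pole there), and no other pole in $\{\RE(s)\ge1\}$; the sandwich of the second paragraph reconfirms this by squeezing $\lim_{s\to1^{+}}(s-1)^2\Phi^{}_{\square,\mathsf{wr}}(s)$ between $\lim_{s\to1}(s-1)^2\bigl(D^{}_{\square}\pm\Phi^{}_{\square}\bigr)(s) = \frac{\log(3)}{2\pi}$. Since $\Phi^{}_{\square,\mathsf{wr}}(s) = \sum_{n} a^{}_{\square}(n)\ts n^{-s}$ has non-negative coefficients, Delange's theorem (Theorem~\ref{thm:meanvalues}) applies and, the pole being of order two with $\Gamma(2)=1$, gives $A^{}_{\square}(x) = \frac{\log(3)}{2\pi}\ts x\log(x) + \oo\bigl(x\log(x)\bigr)$.

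\textbf{Main obstacle.} The delicate part is the bookkeeping in the first two paragraphs: keeping every sum-versus-integral comparison one-sided while remaining sharp enough that the accumulated defect fits within the $\pm\Phi^{}_{\square}$ margin, in particular the odd-wedge analysis — where the offset $\frac{\sqrt3-1}{2}$, the substitution $u=2\ell+1$ and the correction $-1$ in $\sum_{k\ge1}(2k+1)^{1-2s} = (1-2^{\ts1-2s})\zeta(2s-1)-1$ all interact — and verifying that the two halves recombine into the single clean factor $\frac{2+2^{s}}{1+2^{s}}$ of $D^{}_{\square}$. The refined error term (the content of Theorem~\ref{thm:sq2}) requires carrying this computation much further and is deferred to the supplement; here, where only an $\oo\bigl(x\log(x)\bigr)$ error is claimed, the crude one-sided estimates above suffice.
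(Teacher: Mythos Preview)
Your argument is correct and follows essentially the same route as the paper's proof: replace each inner wedge sum by the corresponding integral via Euler's summation formula (the paper packages this as Lemma~\ref{lem:est-sum} with $\alpha=\sqrt{3}$, $\beta=\gamma=0$ for the even part and $\beta=(\sqrt{3}-1)/2$, $\gamma=\tfrac12$ for the odd part), assemble the result into $D^{}_{\square}$, observe that the remainder is analytic on $\{\RE(s)>\tfrac12\}$ up to simple-pole contributions at $s=1$, and conclude by Delange's theorem. Your write-up is actually more explicit than the paper's on two points the paper leaves as ``straight-forward calculation'': the verification that the even and odd integral contributions recombine into the single factor $\tfrac{2+2^s}{1+2^s}$, and the handling of the extra term $R(s)$ together with the inequality $\tfrac{1-\sqrt{3}^{\,1-s}}{s-1}<2$ needed to close the lower sandwich bound.
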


\begin{proof}
  Clearly, $\Phi^{}_{\square,\mathsf{wr}}(s)$ is the sum of
  $\Phi^{}_{\square} (s)$ and the two contributions from
  Eqs.~\eqref{eq:sq-even} and~\eqref{eq:sq-odd}.  For real $s>1$, the
  latter can be both bounded from below and above by an application of
  Lemma~\ref{lem:est-sum} from Appendix~\ref{app-sec:ana} with $\alpha
  = \sqrt{3}$, the former with parameters $\beta = \gamma = 0$ and the
  latter (after pulling out a factor of $2^s$ in the denominator) with
  $\beta = (\sqrt{3} - 1)/2$ and $\gamma = \frac{1}{2}$. A
  straight-forward calculation leads to the explicit expression for
  the function $D^{}_{\square} (s)$, as well as to the inequality
  stated.
  
  It follows from the explicit expression for $D^{}_{\square} (s)$
  that it is a meromorphic function in the whole plane.  Using the
  Euler summation formula, we see that the difference
  $\Phi^{}_{\square,\mathsf{wr}}(s)-D^{}_{\square} (s)$ is an analytic
  function for $\RE(s) > \frac{1}{2}$, guaranteeing that
  $\Phi^{}_{\square,\mathsf{wr}}(s)$ is meromorphic in the half plane
  \mbox{$\{\RE(s) > \frac{1}{2} \}$}.

  The right-most singularity of $\zeta (s) \zeta(2s-1)$ is $s=1$, with
  a pole of the form $\frac{1}{2 (s-1)^2}$, while the entire factor of
  $D^{}_{\square} (s)$ in front of it is analytic near $s=1$ (as well
  as on the line $\{ \RE (s) = 1 \}$).  An application of
  Theorem~\ref{thm:meanvalues} from Appendix A now leads to the
  claimed growth rate.
\end{proof}

The difference of the bounds in Theorem~\ref{thm:sq1} is $2
\Phi^{}_{\square} (s)$, which is a Dirichlet series that itself allows
an application of Theorem~\ref{thm:meanvalues}.  The corresponding
summatory function has an asymptotic growth of the form $c x +
\oo(x)$, which suggests that the error term of $A^{}_{\square} (x)$
might be improved in this direction. However, it seems difficult to
extract good error terms from Delange's theorem; compare the example
in~\cite[Sec 1.8]{Bruedern}.  Since numerical calculations support the
above suggestion, we employed direct methods such as Dirichlet's
hyperbola method; compare~\cite[Sec~3.5]{Apostol}
or~\cite[Sec. I.3]{Tenenbaum}.  A lengthy calculation
(see~\cite{suppl} for the details) finally leads to the following
result.

\begin{theorem}\label{thm:sq2}
  Let\/ $a^{}_{\square} (n)$ be the number of well-rounded sublattices
  of index\/ $n$ in the square lattice.  Then, the summatory function\/
  $A^{}_{\square} (x) \, = \, \sum_{n\leq x} a^{}_{\square} (n)$
  possesses the asymptotic growth behaviour
\begin{align*}
  A^{}_{\square} (x) \, & = \, \frac{\log(3) }{3}
      \frac{L(1,\chi^{}_{-4})}{\zeta(2)} x (\log(x) - 1)
      + c^{}_{\square} x + \OO\bigl(x^{3/4}\log(x)\bigr) \\[1mm]
  & = \, \frac{\log(3) }{2\pi} x \log(x) + \left( c^{}_{\square} -
    \frac{\log(3) }{2\pi} \right) x + \OO\bigl(x^{3/4}\log(x)\bigr)
  \nn
\end{align*}
where, with $\gamma$ denoting the Euler--Mascheroni constant,
\begin{align*}
  c^{}_{\square}& := \frac{L(1,\chi^{}_{-4})}{\zeta(2)} \Biggl(
  \zeta(2) + \frac{\log(3)}{3} \left(
    \frac{L'(1,\chi^{}_{-4})}{L(1,\chi^{}_{-4})} + \gamma -
    2\frac{\zeta'(2)}{\zeta(2)} \right) +
  \frac{\log(3)}{3}\left(2\gamma - \frac{\log(3)}{4} -
    \frac{\log (2)}{6} \right) \\[1mm]
  & \quad - \sum_{p=1}^\infty \frac{1}{p} \biggl(\frac{\log(3)}{2} -
  \sum_{p<q< p\sqrt{3}} \frac{1}{q} \biggr)  
  - \frac{4}{3}\sum_{k=0}^\infty \frac{1}{2k+1}\biggl(
  \frac{1}{4}\log(3) - \sum_{k<\ell< k\sqrt{3}+(\sqrt{3}-1)/2}
  \frac{1}{2\ell+1} \biggr) \Biggr) \\
  & \approx 0.6272237 
\end{align*}
is the coefficient of\/ $(s-1)^{-1}$ in the Laurent series of\/
$\sum_{n \geq 1} a^{}_{\square} (n) n^{-s}$ around\/ $s=1$.
\end{theorem}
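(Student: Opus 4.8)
The plan is to read off the refined asymptotics of $A^{}_{\square}$ from the three summands of $\Phi^{}_{\square,\mathsf{wr}}$ in Theorem~\ref{thm:sq1}, treating each one separately on the level of summatory functions and replacing Delange's theorem (which only yields the $\oo(x\log x)$ error) by Dirichlet's hyperbola method. The contribution of $\Phi^{}_{\square}=\zeta^{}_{\QQ(\ii)}$ is the classical count of Gaussian ideals, $\tfrac{\pi}{4}x+\OO(x^{1/3})$, a purely linear term whose coefficient $L(1,\chi^{}_{-4})=\tfrac{\pi}{4}$ becomes the summand ``$\zeta(2)$'' inside the parentheses of $c^{}_{\square}$ once the common prefactor $L(1,\chi^{}_{-4})/\zeta(2)$ is pulled out. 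By Eqs.~\eqref{eq:sq-even} and~\eqref{eq:sq-odd}, each of the two rhombic summands equals an elementary factor analytic and non-vanishing at $s=1$ (the constant $2$, resp.\ $2/(1+2^{-s})$), times the primitive-similar-sublattice series $\Phi^{\mathsf{pr}}_{\square}(s)=\zeta(s)L(s,\chi^{}_{-4})/\zeta(2s)$, which has a simple pole at $s=1$, times a constrained double Dirichlet sum: $T(s)=\sum_{p<q<\sqrt 3\ts p}(2pq)^{-s}$ in the even case and the analogous sum over odd integers $2k+1<2\ell+1<\sqrt 3\,(2k+1)$ in the odd case. Since the inner sum over $q$ behaves like $\tfrac{1-3^{(1-s)/2}}{s-1}\ts p^{1-s}$, one gets $T(s)\sim\tfrac{\log 3}{8}(s-1)^{-1}$ (resp.\ $\tfrac{\log 3}{16}(s-1)^{-1}$), so each rhombic summand has a double pole at $s=1$; on the level of coefficients this says that, apart from the ideal count, $a^{}_{\square}(n)$ is a sum of two Dirichlet convolutions $2\,(g*h)$, where $h$ denotes the non-negative coefficients of $\Phi^{\mathsf{pr}}_{\square}$ (resp.\ of $\Phi^{\mathsf{pr}}_{\square}/(1+2^{-s})$) and $g(n)=\#\{(p,q):p<q<\sqrt 3\ts p,\ 2pq=n\}$ (resp.\ the analogous count over odd integers).

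Next I would establish second-order asymptotics with a power-saving error for each of the two building blocks. For $h$ this is routine: from $h(n)=\sum_{d^2e=n}\mu(d)\ts r(e)$ with $r$ the Gaussian ideal-counting function, $H(x):=\sum_{m\le x}h(m)=\tfrac{L(1,\chi^{}_{-4})}{\zeta(2)}\ts x+\OO(\sqrt x)$, and the Laurent expansion $\Phi^{\mathsf{pr}}_{\square}(s)=\tfrac{L(1,\chi^{}_{-4})}{\zeta(2)}\bigl(\tfrac{1}{s-1}+\gamma+\tfrac{L'(1,\chi^{}_{-4})}{L(1,\chi^{}_{-4})}-2\tfrac{\zeta'(2)}{\zeta(2)}\bigr)+\OO(s-1)$ is read off from those of $\zeta$, $L(\cdot,\chi^{}_{-4})$ and $1/\zeta(2\cdot)$ — this produces exactly the corresponding block of $c^{}_{\square}$. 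For $g$ I would count lattice points of the hyperbolic triangle $\{(p,q):p<q<\sqrt 3\ts p,\ pq\le x\}$, splitting the $p$-range at $p\asymp x^{1/2}/3^{1/4}$ — below that value $q$ runs through all of $(p,\sqrt 3\ts p)$, above it through $(p,x/p)$ — and applying Euler's summation formula in each part; this gives $G(x)=\tfrac{\log 3}{4}x+\OO(\sqrt x)$ (up to the harmless rescaling $x\mapsto x/2$ in the even case) together with the constant term in the Laurent expansion of $T(s)$ at $s=1$. The boundary terms of these Euler summations are precisely the conditionally convergent sums $\sum_{p}\tfrac1p\bigl(\tfrac{\log 3}{2}-\sum_{p<q<p\sqrt 3}\tfrac1q\bigr)$ and $\sum_{k}\tfrac{1}{2k+1}\bigl(\tfrac{\log 3}{4}-\sum_{k<\ell<k\sqrt 3+(\sqrt 3-1)/2}\tfrac{1}{2\ell+1}\bigr)$ that appear in $c^{}_{\square}$; they converge because each bracket is $\OO(1/p)$ by a further Euler summation, and the $\log 2$ terms arise from the odd-versus-even index dichotomy and from expanding the prefactor $1/(1+2^{-s})$.

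I would then assemble the pieces through the hyperbola identity
\[
   \sum_{mk\le x} h(m)\ts g(k) \, = \, \sum_{m\le\sqrt x} h(m)\ts G(x/m)
   \, + \, \sum_{k\le\sqrt x} g(k)\ts H(x/k) \, - \, H(\sqrt x)\ts G(\sqrt x)\ts ,
\]
substitute the two expansions, and evaluate the resulting sums $\sum_{m\le\sqrt x}h(m)/m$ and $\sum_{k\le\sqrt x}g(k)/k$ by partial summation against $H$ and $G$. Since $\sum h(m)m^{-s}$ has a simple pole at $s=1$ of residue $L(1,\chi^{}_{-4})/\zeta(2)$, one has $\sum_{m\le T}h(m)/m=\tfrac{L(1,\chi^{}_{-4})}{\zeta(2)}\log T+C_h+\OO(T^{-1/2})$, and it is this logarithm that promotes the two $x$-linear blocks into the $x\log x$ main term. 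Collecting the coefficients of $x\log x$ and of $x$ and adding the ideal-count contribution then gives the stated expansion, with error controlled by the tails $\sqrt x\sum_{m\le\sqrt x}|h(m)|/\sqrt m\ll x^{3/4}\log x$ (using $|h(m)|\le 2^{\omega(m)}\ll\sigma^{}_0(m)$) and the analogous $k$-sum. As an independent check on all constants, the leading coefficient and $c^{}_{\square}$ must equal the coefficients of $(s-1)^{-2}$ and $(s-1)^{-1}$ in the Laurent expansion of $\Phi^{}_{\square,\mathsf{wr}}$ at $s=1$, since a Dirichlet series with principal part $a^{}_{-2}(s-1)^{-2}+a^{}_{-1}(s-1)^{-1}$ has summatory function $a^{}_{-2}\ts x(\log x-1)+a^{}_{-1}\ts x$ to this order; summing the residues found above yields $a^{}_{-2}=\tfrac{\log 3}{3}\tfrac{L(1,\chi^{}_{-4})}{\zeta(2)}=\tfrac{\log 3}{2\pi}$, which matches the main term.

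The main obstacle is not any single analytic step — Euler's summation formula, the ideal-count asymptotics, and partial summation are all standard — but the bookkeeping of all $(s-1)^0$-order constants as they pass through two nested Euler summations (the inner $q$- and $\ell$-sums, then the outer $p$- and $k$-sums), through the expansions of the elementary prefactors, and through the hyperbola split, while keeping every remainder uniform in $x$. Combining these many contributions into the single closed form for $c^{}_{\square}$, and verifying that the $x\log x$ coefficients from the even part, the odd part and the cross-terms add up to $\tfrac{\log 3}{3}L(1,\chi^{}_{-4})/\zeta(2)$, is exactly the lengthy computation deferred to~\cite{suppl}; it is where arithmetic slips are easiest to make, and where the consistency check against the Laurent expansion is the most valuable safeguard.
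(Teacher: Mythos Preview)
Your proposal is correct and follows essentially the same route as the paper: decompose $\Phi^{}_{\square,\mathsf{wr}}$ into its three summands, realise the two rhombic pieces as Dirichlet convolutions, obtain second-order asymptotics for each factor via Euler's summation formula, and then combine them with Dirichlet's hyperbola identity, the $\OO(x^{3/4}\log x)$ error arising from the cross terms. The only cosmetic difference is the grouping of factors: the paper writes the even part as a triple convolution $c*w*b$ with $b$ the coefficients of $\Phi^{}_{\square}$, $w$ those of the constrained double sum, and $c$ those of $2\cdot 2^{-s}/\zeta(2s)$, applying the hyperbola formula to $w*b$ (and, one level down, to $b=\chi^{}_{-4}*1$) and the simpler single-sum convolution formula to bring in $c$; you instead absorb $1/\zeta(2s)$ into $b$ to work directly with $h\leftrightarrow\Phi^{\mathsf{pr}}_{\square}$ and apply the hyperbola method once to $g*h$. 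Either grouping leads to the same constants and the same error term.
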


Note that $L'(1,\chi^{}_{-4})$ can be computed efficiently via
\begin{equation}
\frac{L'(1,\chi^{}_{-4})}{L(1,\chi^{}_{-4})} 
    \, = \, \log\left(M(1,\sqrt{2})^2 \, \frac{e^\gamma}{2}\right)
    \, = \, \log\biggl(\Gamma\biggl(\frac{3}{4}\biggr)^{\! 4} 
    \, \frac{e^\gamma}{\pi}\biggr)
    \, \approx \, 0.2456096 \ts ,
\end{equation}
where $M(x,y)$ is the arithmetic-geometric mean of $x$ and $y$, and
$\Gamma$ denotes the gamma function; see~\cite{Moree} and references
therein.

\begin{proof}[Sketch of proof]
  $\Phi^{}_{\square,\mathsf{wr}}(s)=\sum_{n=1}^\infty
  a^{}_{\square}(n)n^{-s}$ is a sum of three Dirichlet series, each of
  which is itself a product of several Dirichlet series. Hence, each
  contribution to $a^{}_{\square}(n)$ is a Dirichlet convolution of
  arithmetic functions. The asymptotic behaviour can thus be
  calculated by elementary methods as described in
  \cite[Sec. 3.5]{Apostol}, making use of Euler's summation
  formula~\eqref{eq:euler-sum} wherever appropriate.  To be more
  specific, let
\begin{equation}
  \Phi_{\mathsf{wr},\text{\rm even}}(s)\, =
   \sum_{n\in\NN}\frac{a_{\text{\rm even}}(n)}{n^s} \ts ,
\end{equation}
which is a product of the Dirichlet series
\begin{align}
  \frac{2}{2^s}\frac{1}{\zeta(2s)}&\, =\sum_{n\in\NN}
         \frac{c(n)}{n^s} \ts , \nn\\
  \sum_{p\in\NN}\, \sum_{p < q < \sqrt{3}p}\frac{1}{p^s q^s}
          &\, =\sum_{n\in\NN}\frac{w(n)}{n^s} \ts , \nn\\
  \Phi^{}_{\square}(s)&\, =\sum_{n\in\NN}\frac{b(n)}{n^s} \ts . \nn
\end{align}
Hence $a_{\text{\rm even}}=c*w*b$ is the Dirichlet convolution of
$c,w,b$. The summatory function of a Dirichlet convolution $f*g$ can
now be calculated via the classic formulas (compare~\cite{Apostol}
and~\mbox{\cite[Sec. I.3.2]{Tenenbaum})}
\begin{align}\label{eq:diri-conv1}
  \sum_{n\leq x} \left(f*g\right)(n)&
     \, =\sum_{m\leq x}\;\sum_{d\leq x/m} f(m)g(d)\\
     &\, =\sum_{m\leq \sqrt{x}}\;\sum_{m<d\leq x/m} 
      \bigl(f(m)g(d)+f(d)g(m)\bigr)
      +\sum_{m\leq \sqrt{x}}f(m)g(m) \ts , 
      \label{eq:diri-conv2}
\end{align}
where the latter formula is used for the convolutions $w*b$ and
$b=\chi^{}_{-4}*1$.
\end{proof}

\section{Well-rounded sublattices of $\ZZ[\rho]$}\label{sec:tr}

Next, we consider the hexagonal lattice $\ZZ[\rho]$, with
$\rho=\frac{1+\ii \sqrt{3}}{2}$. As an arithmetic object, it is the
ring of Eisenstein integers, the maximal order of the quadratic field
$\QQ(\ii\sqrt{3}\, )$.  The Dirichlet series generating function for the
number of similar sublattices of $\ZZ[\rho]$ is
\begin{equation}\label{eq:Dedekind-tri}
   \Phi^{}_{\triangle} (s) \, = \, \zeta^{}_{\QQ (\rho)} (s)
   \, = \, L (s,\chi^{}_{-3}) \ts \zeta (s) \ts ,
\end{equation}
with the character
\[
    \chi^{}_{-3} (n) \, = \,
    \begin{cases} 0 , & \text{if $n \equiv 0 \bmod 3$, } \\
    1 , & \text{if $n\equiv 1 \bmod 3$, } \\
    -1 , & \text{if $n\equiv 2 \bmod 3$, }
    \end{cases}
\]
see \cite{BSZ11, Zagier} and Appendix~\ref{app-sec:ana}.

Let $\{ z^{}_{1}, z^{}_{2} \}$ be a reduced basis of a well-rounded
sublattice of $\ZZ[\rho]$.  The orthogonality of $z_1+z_2$ and
$z_1-z_2$ implies that $\frac{z_1+z_2}{z_1-z_2}=\ii\sqrt{3}\ r$ with
$r\in\QQ$.  This shows that square lattices cannot occur here since
this would require $|z_1+z_2|^2=|z_1-z_2|^2$, which is impossible.
Thus, the well-rounded sublattices of $\ZZ[\rho]$ are
\mbox{rhombic-cr} or hexagonal lattices. However, at least one of
$z_1+z_2$ and $z_1-z_2$ is divisible by $\ii\sqrt{3}=\rho-\bar{\rho}$,
and w.l.o.g.\ we may assume that $\ii\sqrt{3}$ divides
$z_1-z_2$. Hence, there exist $p$ and $q\in\ZZ$ together with a
primitive $z\in\ZZ[\rho]$ such that $z_1+z_2=pz$ and
$z_1-z_2=\ii\sqrt{3}qz$. Here, primitive means that $n=1$ is the only
integer $n\in \NN$ that divides $z$. We may again choose $p$ and $q$
positive and
\begin{equation}
  \textstyle\Gam \, = \,\langle z_1,z_2\rangle_\ZZ \, = \, 
  \left\langle \frac{p+\ii\sqrt{3} q}{2}z, 
    \frac{p-\ii\sqrt{3} q}{2}z\right\rangle_\ZZ
    = \, \left\langle (\frac{p-q}{2}+\rho q)z, 
   (\frac{p+q}{2}-\rho q)z\right\rangle_\ZZ
\label{eq:rhombic-crsublattice}
\end{equation} 
is thus a sublattice of index $pq|z|^2$. In particular, $\Gam$ is
a hexagonal lattice if and only if $p=q$ or $p=3q$. Note that
Eq.~(\ref{eq:rhombic-crsublattice}) shows that $p$ and $q$ have the same
parity.

Well-rounded sublattices must satisfy the additional constraints
$|z_1\pm z_2|^2\geq|z_1|^2=|z_2|^2$, which, in this case, are
equivalent to $q\leq p \leq 3q$. The set of possible indices of
well-rounded sublattices is thus given by
\begin{equation}
  \bigl\{ 4pq|z|^2 \, \big| \,q\leq p\leq 3q, z\in\ZZ[\rho] \bigr\} 
  \, \cup \, 
  \bigl\{ pq|z|^2 \, \big| \, q\leq p\leq 3q, z\in\ZZ[\rho], 
  \ts 2\nmid pq \bigr\} \ts .
\end{equation}
An alternative parametrisation of this set can be found
in~\cite[Cor.~4.9]{Fuksh3}. The equivalence of these formulations can
easily be checked by recalling that the (rational) primes represented
by the norm form $m^2-mn+n^2$ of $\ZZ[\rho]$ are precisely $3$ and all
primes $p\equiv 1\pmod 3$.

Counting the number of distinct well-rounded sublattices of a given
index works essentially as in the square lattice case. However, we
have to avoid counting the same lattice twice. Let $z$ be divisible by
$\ii\sqrt{3}$, so that $z=\ii\sqrt{3}w$. Then,
\begin{align}
  z_1&\, = \, \frac{p+\ii\sqrt{3} q}{2}z
        \, = \, -\frac{3q-\ii\sqrt{3} p}{2}w \ts ,\\
  z_2&\, = \, \frac{p-\ii\sqrt{3} q}{2}z
        \, = \, \frac{3q+\ii\sqrt{3} p}{2}w
\end{align}
shows that the tuples $(p,q,z)$ and $(3q,p,w)$ correspond to the same
sublattice. Thus, we only sum over primitive $z$ that are not divisible by
$\ii\sqrt{3}$.

Since we know the generating function~\eqref{eq:Dedekind-tri} for the
similar sublattices already from~\cite{Baake+Grimm}, we concentrate on
the rhombic sublattices here (excluding hexagonal sublattices,
as before).  The
summation over all primitive $z\in\ZZ[\rho]$ not divisible by
$\ii\sqrt{3}$ gives the contribution
$\frac{1}{1+3^{-s}}\Phi^{\mathsf{pr}}_{\triangle}(s)$. The generating
function of all rhombic sublattices of even index then reads
\begin{equation}\label{eq:tri-even}
   \Phi^{}_{\triangle,\mathsf{wr},\text{\rm even}}(s) 
   \, = \, \frac{3}{4^s(1+3^{-s})}
   \sum_{p\in\NN}\ \sum_{p < q < 3 p}\frac{1}{p^s q^s}\,
   \Phi^{\mathsf{pr}}_{\triangle}(s) \ts ,
\end{equation}
where the factor of $3$ reflects that each sublattice occurs in three
different orientations.

In the case of odd indices, we substitute again $p=2k+1$ and
$q=2\ell+1$, wherefore our constraints read $k < \ell < 3k+1$.  This
leads to the following expression for the generating function of all
rhombic sublattices of odd index:
\begin{equation}\label{eq:tri-odd}
  \Phi_{\triangle,\mathsf{wr}, \text{\rm odd}}(s) \, = \, \frac{3}{1+3^{-s}}
  \sum_{k\in\NN}\ \sum_{k < \ell < 3k+1}
  \frac{1}{(2k+1)^s (2\ell+1)^s}\,
  \Phi^{\mathsf{pr}}_{\triangle}(s) \ts .
\end{equation}

Now, we can apply the same strategy as in the square lattice case.

\begin{theorem}\label{thm:tri1}
  Let\/ $a^{}_{\triangle} (n)$ be the number of well-rounded
  sublattices of index\/ $n$ in the hexagonal lattice, and\/
  $\Phi^{}_{\triangle,\mathsf{wr}} (s) = \sum_{n=1}^{\infty}
  a^{}_{\triangle} (n)n^{-s}$ the corresponding Dirichlet series
  generating function. It is given by
\[
  \Phi^{}_{\triangle,\mathsf{wr}} (s) \, = \,
   \Phi^{}_{\triangle} (s) + \Phi^{}_{\triangle,\mathsf{wr}, \text{\rm even}}
    (s) + \Phi^{}_{\triangle,\mathsf{wr},\text{\rm odd}} (s) \ts ,
\]
with the series from Eqs.~\eqref{eq:Dedekind-tri}, \eqref{eq:tri-even}
and~\eqref{eq:tri-odd}.

If\/  $s>1$, we have the inequality
\[
   D^{}_{\triangle} (s) - E^{}_{\triangle} (s) \, < \, 
   \Phi^{}_{\triangle,\mathsf{wr}} (s) \, < \,
   D^{}_{\triangle} (s) \ts,
\]
   with the functions
\[ 
\begin{split}
   D^{}_{\triangle} (s) & \, =\, \myfrac{1}{2}\myfrac{3}{1 + 3^{-s}}\,
   \myfrac{1 - 3_{}^{1-s}}{s-1}\, \frac{L(s,\chi^{}_{-3})}{\zeta (2s)}
   \, \zeta (s)\ts \zeta (2s-1) \, , \\
   E^{}_{\triangle} (s) & \, = \, 
   \myfrac{3}{1+3^{-s}} \, L(s,\chi^{}_{-3}) \ts \zeta(s)\ts . 
\end{split}
\]
The function\/ $\Phi^{}_{\triangle,\mathsf{wr}}(s)$ is meromorphic in
the half plane \mbox{$\{\RE(s) > \frac{1}{2}\}$}, with a pole of
order\/ $2$ at\/ $s=1$, and no other pole in the half plane
\mbox{$\{\RE(s) \ge 1\}$}.  As a consequence, the summatory function\/
$A^{}_{\triangle} (x) \, = \sum_{n\leq x} a^{}_{\triangle} (n)$, as $x\to\infty$,
possesses the asymptotic growth behaviour
\[
         A^{}_{\triangle} (x) \, = \,
         \myfrac{3 \sqrt{3} \ts \log(3)}{8 \ts \pi}\,
          x\log(x)  + \oo\bigl(x\log(x)\bigr).
\]
\end{theorem}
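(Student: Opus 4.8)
The plan is to mirror exactly the proof of Theorem~\ref{thm:sq1}, which is the square-lattice analogue. The decomposition $\Phi^{}_{\triangle,\mathsf{wr}}(s) = \Phi^{}_{\triangle}(s) + \Phi^{}_{\triangle,\mathsf{wr},\text{\rm even}}(s) + \Phi^{}_{\triangle,\mathsf{wr},\text{\rm odd}}(s)$ is immediate from Eqs.~\eqref{eq:tri-even} and~\eqref{eq:tri-odd} together with the fact that the similar-sublattice contribution is $\Phi^{}_{\triangle}(s)$ from Eq.~\eqref{eq:Dedekind-tri}. The core of the argument is then to sandwich the two ``rhombic'' pieces between explicit meromorphic functions on the real half-axis $s>1$. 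For this I would invoke Lemma~\ref{lem:est-sum} from Appendix~\ref{app-sec:ana} with $\alpha = 3$: the even sum $\sum_{p}\sum_{p<q<3p} (pq)^{-s}$ gets the parameters $\beta=\gamma=0$, and the odd sum $\sum_{k}\sum_{k<\ell<3k+1}\bigl((2k+1)(2\ell+1)\bigr)^{-s}$ gets $\beta = 1$ and $\gamma = \tfrac12$ after pulling the factor $2^s$ out of each of the two denominators (note that $q = 3p$ is a genuine boundary point, but it corresponds to a hexagonal sublattice and is therefore already excluded, consistent with the strict inequality in the lemma). Adding the two upper bounds, multiplying by the common prefactors $\tfrac{3}{1+3^{-s}}$ and $\tfrac{1}{4^s}$, and using $\Phi^{\mathsf{pr}}_{\triangle}(s) = \zeta(s)L(s,\chi^{}_{-3})/\zeta(2s)$, a routine computation collapses everything into the stated $D^{}_{\triangle}(s)$, while the lower bound differs from it by exactly $E^{}_{\triangle}(s) = \tfrac{3}{1+3^{-s}} L(s,\chi^{}_{-3})\zeta(s)$ (coming from the ``$-\gamma$''-type correction term in the lemma applied to the even part). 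This yields the claimed two-sided inequality.

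Next I would address meromorphy. The closed form for $D^{}_{\triangle}(s)$ shows it is meromorphic in the whole plane, since $\zeta(s)$, $\zeta(2s-1)$, $L(s,\chi^{}_{-3})$ and $1/\zeta(2s)$ all are, and the factor $(1-3^{1-s})/(s-1)$ is entire. To get meromorphy of $\Phi^{}_{\triangle,\mathsf{wr}}(s)$ in $\{\RE(s)>\tfrac12\}$, I would show that the difference $\Phi^{}_{\triangle,\mathsf{wr}}(s) - D^{}_{\triangle}(s)$ extends analytically past $\RE(s) = 1$; as in Theorem~\ref{thm:sq1} this follows from applying the Euler summation formula~\eqref{eq:euler-sum} to the inner sums over $q$ (resp. $\ell$), which turns each double sum into its integral approximation plus a remainder that converges for $\RE(s) > \tfrac12$. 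One then checks that the right-most singularity of $\zeta(s)\zeta(2s-1)$ is the double pole at $s=1$ of the form $\tfrac{1}{2(s-1)^2}$, that the remaining factor $\tfrac12\cdot\tfrac{3}{1+3^{-s}}\cdot(1-3^{1-s})\cdot L(s,\chi^{}_{-3})/\zeta(2s)$ of $D^{}_{\triangle}(s)$ is analytic and non-zero near $s=1$ and on the line $\{\RE(s)=1\}$ (here one uses $L(1,\chi^{}_{-3}) \neq 0$ and $\zeta(s) \neq 0$ on $\RE(s)=1$), so that $\Phi^{}_{\triangle,\mathsf{wr}}$ inherits a pole of order exactly $2$ at $s=1$ and no other pole in $\{\RE(s)\geq 1\}$.

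Finally, for the asymptotics of $A^{}_{\triangle}(x)$ I would apply Theorem~\ref{thm:meanvalues} (Delange's theorem) from Appendix~\ref{app-sec:ana}: all three constituent Dirichlet series satisfy its hypotheses (the bounding functions $D^{}_{\triangle}$ and $D^{}_{\triangle} - E^{}_{\triangle}$ do, by construction, and $\Phi^{}_{\triangle,\mathsf{wr}}$ is sandwiched between them), so the leading term of $A^{}_{\triangle}(x)$ is governed by the leading Laurent coefficient of the order-2 pole at $s=1$. Concretely, the residue computation gives the coefficient of $x\log(x)$ as the value at $s=1$ of $\tfrac12\cdot\tfrac{3}{1+3^{-1}}\cdot(1-3^{0})$\dots — here I must be slightly careful, since $1 - 3^{1-s}$ vanishes at $s=1$ and cancels one power of $(s-1)$, so the effective coefficient of $\tfrac{1}{2(s-1)^2}$ near $s=1$ is $\tfrac12\cdot\tfrac{3}{4/3}\cdot\log(3)\cdot L(1,\chi^{}_{-3})/\zeta(2) = \tfrac{9\log(3)}{8}\cdot L(1,\chi^{}_{-3})/\zeta(2)$. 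Inserting $L(1,\chi^{}_{-3}) = \pi/(3\sqrt{3})$ and $\zeta(2) = \pi^2/6$ and combining with the $\tfrac1{(s-1)^2}$-to-$x\log x$ dictionary from Delange's theorem produces the stated constant $\tfrac{3\sqrt3\,\log(3)}{8\pi}$. The main obstacle, as in the square case, is purely bookkeeping: matching the parameters of Lemma~\ref{lem:est-sum} to the odd-index sum correctly after extracting the $2^s$ factors, and carrying the constants $L(1,\chi^{}_{-3})$, $\zeta(2)$ through the double-pole residue without arithmetic slips; there is no genuine conceptual difficulty beyond what Theorem~\ref{thm:sq1} already resolved.
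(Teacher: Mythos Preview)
Your proposal follows exactly the paper's approach: apply Lemma~\ref{lem:est-sum} with $\alpha=3$ to the inner sums (parameters $\beta=\gamma=0$ for the even piece, $\beta=1$, $\gamma=\tfrac12$ for the odd piece after extracting $2^s$), combine into $D_\triangle$, transfer meromorphy via Euler summation, and conclude by Delange's theorem. The paper's own proof is only a sketch saying precisely this, so there is nothing to add on the level of strategy.

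One point of bookkeeping is slightly off in your description of $E_\triangle$. You say the correction ``comes from the `$-\gamma$'-type correction term in the lemma applied to the even part'', but in fact it comes from \emph{both} parts. The lower-bound correction in Lemma~\ref{lem:est-sum} is $-(\ell+\gamma)^{-s}$ regardless of whether $\gamma=0$ or $\gamma=\tfrac12$; summing these corrections over $p$ (even part) and over $k$ (odd part), multiplied by the respective prefactors, gives contributions $\tfrac{1}{4^s}\zeta(2s)$ and $(1-4^{-s})\zeta(2s)$ which add to $\zeta(2s)$; this, times $\tfrac{3}{1+3^{-s}}\Phi^{\mathsf{pr}}_\triangle(s)$, is exactly $E_\triangle(s)$. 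So the identity you want is
\[
  \tfrac{1}{4^s}+\bigl(1-\tfrac{1}{4^s}\bigr)=1
  \quad\text{and}\quad
  \tfrac{1}{4^s}+\tfrac12\bigl(1-2^{1-2s}\bigr)=\tfrac12,
\]
the first collapsing the $\zeta(2s)$-corrections, the second collapsing the $\zeta(2s-1)$-main terms. This does not affect the validity of your argument, only the attribution of where $E_\triangle$ originates.
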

\begin{proof}[Sketch of proof]
  In analogy to before, $\Phi^{}_{\triangle,\mathsf{wr}} (s)$ is the
  sum of the contributions from Eqs.~\eqref{eq:tri-even}
  and~\eqref{eq:tri-odd}. The calculation of the upper and lower
  bounds can be done as in Theorem~\ref{thm:sq1} via
  Lemma~\ref{lem:est-sum}, this time with $\alpha = 3$ and appropriate
  choices for $\beta$ and $\gamma$. The conclusion on the growth rate
  of $A^{}_{\triangle}(x)$ follows as before from
  Theorem~\ref{thm:meanvalues}.
\end{proof}

As for the square lattice, we can improve the error term considerably
by lengthy but elementary calculations (see~\cite{suppl} for the
details).  Eventually, we obtain the following result.

\begin{theorem}\label{thm:tri2}
  Let\/ $a^{}_{\triangle} (n)$ be the number of well-rounded
  sublattices of index\/ $n$ in the hexagonal lattice.  Then, the
  summatory function\/ $A^{}_{\triangle} (x) \, =  \sum_{n\leq x}
  a^{}_{\triangle} (n)$ possesses the asymptotic growth behaviour
\begin{align*}
  A^{}_{\triangle} (x) 
    & \, = \,  \frac{9\log(3)}{16}\,
      \frac{L(1,\chi^{}_{-3})}{\zeta(2)} \,
      x (\log(x) - 1)
              + c^{}_\triangle x + \OO\bigl(x^{3/4}\log(x)\bigr) \\[1mm]
    & \, = \, \frac{3\sqrt{3}\, \log(3)}{8\pi} x (\log(x) - 1)
              + c^{}_\triangle x + \OO\bigl(x^{3/4}\log(x)\bigr) , 
\end{align*}
where
\begin{align*}
  c^{}_\triangle& \, = \, L(1,\chi^{}_{-3}) + \frac{9 \log(3)
    L(1,\chi^{}_{-3})}{16 \zeta(2)}\Biggl( \biggl( \gamma +
  \frac{L'(1,\chi^{}_{-3})}{L(1,\chi^{}_{-3})} - 2
  \frac{\zeta'(2)}{\zeta(2)}\biggr)
  + 2\gamma - \frac{\log(3)}{4}  \\[1mm]
  & \qquad - \sum_{p=1}^\infty \frac{1}{p} \biggl(\log(3) -
  \sum_{p<q\leq 3p-1} \frac{1}{q} \biggr) - \sum_{k=0}^\infty
  \frac{4}{2k+1}\biggl( \frac{1}{2}\log(3) - \sum_{k<\ell\leq 3k}
  \frac{1}{2\ell+1}\biggr)
  \Biggr)   \\
  & \, \approx \, 0.4915036 
\end{align*}
is the coefficient of\/ $(s-1)^{-1}$ in the Laurent series of\/
$\sum_n\frac{a^{}_{\triangle} (n)}{n^s}$ around\/ $s=1$.   \qed
\end{theorem}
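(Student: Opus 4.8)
The plan is to mirror the structure of the proof of Theorem~\ref{thm:sq2}, exploiting that $\Phi^{}_{\triangle,\mathsf{wr}}(s)$ is, by Theorem~\ref{thm:tri1}, the sum of three Dirichlet series, each of which factorises as a product of simpler Dirichlet series whose coefficients are concrete arithmetic functions. Concretely, $\Phi^{}_{\triangle}(s) = L(s,\chi^{}_{-3})\zeta(s)$ contributes $b = \chi^{}_{-3} * 1$, while the even and odd pieces in Eqs.~\eqref{eq:tri-even} and~\eqref{eq:tri-odd} factor as $(\text{constrained double sum}) \cdot \Phi^{\mathsf{pr}}_{\triangle}(s)$, with $\Phi^{\mathsf{pr}}_{\triangle}(s) = \Phi^{}_{\triangle}(s)/\zeta(2s)$. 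So each summand of $a^{}_{\triangle}(n)$ is a Dirichlet convolution of three explicit arithmetic functions: a Möbius-type function $c$ (whose Dirichlet series is $\tfrac{3}{4^s(1+3^{-s})}\tfrac{1}{\zeta(2s)}$ in the even case, $\tfrac{3}{1+3^{-s}}\tfrac{1}{\zeta(2s)}$ in the odd case), the coefficient function $w$ of the constrained double sum $\sum_{p}\sum_{p<q<3p} p^{-s}q^{-s}$ (resp.\ the odd-index variant over $2k+1,2\ell+1$ with $k<\ell<3k+1$), and $b = \chi^{}_{-3}*1$.

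First I would pin down the summatory function of $b = \chi^{}_{-3}*1$ with an explicit second-order term and a $\OO(\sqrt{x})$ error via Dirichlet's hyperbola method, exactly as in Eq.~\eqref{eq:div-asympt} but with $L(1,\chi^{}_{-3})$ and $L'(1,\chi^{}_{-3})$ replacing the relevant constants; this uses $\sum_{n\le x}\chi^{}_{-3}(n) = \OO(1)$ and partial summation. Next I would handle the constrained double sum: using Euler's summation formula~\eqref{eq:euler-sum} on the inner sum $\sum_{p<q<3p} q^{-s}$ one extracts its Laurent expansion around $s=1$, the pole coming from $\sum_p p^{-1}\big(\log 3 - \text{(tail correction)}\big)$, which is exactly where the constant $\sum_{p}\tfrac1p(\log 3 - \sum_{p<q\le 3p-1} q^{-1})$ and its odd-index analogue enter $c^{}_\triangle$. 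Then I would combine the three convolution factors using the two-variable hyperbola formulas \eqref{eq:diri-conv1}–\eqref{eq:diri-conv2}, applying the symmetric split \eqref{eq:diri-conv2} to the convolutions $w*b$ and $b = \chi^{}_{-3}*1$ (those are the factors with a $\sqrt{x}$-type error), while the Möbius factor $c$ is summed trivially by absolute convergence of $1/\zeta(2s)$. Multiplying out the asymptotic expansions and collecting terms of order $x\log x$, $x$, and the error yields $A^{}_{\triangle}(x) = \tfrac{9\log 3}{16}\tfrac{L(1,\chi^{}_{-3})}{\zeta(2)}\,x(\log x - 1) + c^{}_\triangle x + \OO(x^{3/4}\log x)$, and the two displayed forms agree via $L(1,\chi^{}_{-3}) = \pi/(3\sqrt{3})$, so that $\tfrac{9\log 3}{16}\cdot\tfrac{L(1,\chi^{}_{-3})}{\zeta(2)} = \tfrac{3\sqrt 3\,\log 3}{8\pi}$ after using $\zeta(2) = \pi^2/6$.

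The main obstacle is bookkeeping: keeping track of all the second-order constants as three convolutions are multiplied together, since every factor contributes both a main term and a subleading term, and the cross terms at order $x$ must be assembled without error — this is the source of the lengthy expressions for $c^{}_\triangle$ and the reason the computation is relegated to~\cite{suppl}. A secondary technical point is justifying the uniformity needed to get the $x^{3/4}\log x$ error rather than something larger: one must choose the truncation points in the hyperbola method (here $\sqrt{x}$ for each two-variable split, giving a net $x^{3/4}$ after one further level) carefully and control the contribution of $1/\zeta(2s)$ — i.e.\ of the squarefree-type kernel $c$ — by noting its partial sums are $\OO(\sqrt{x})$, which is absorbed. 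Finally, the equivalence of our index parametrisation with Fukshansky's~\cite[Cor.~4.9]{Fuksh3} plays no role in the asymptotics but should be recorded for consistency, as already indicated before Theorem~\ref{thm:tri1}; the Laurent-coefficient interpretation of $c^{}_\triangle$ follows a posteriori by comparing the derived asymptotic with Theorem~\ref{thm:meanvalues} applied to $\Phi^{}_{\triangle,\mathsf{wr}}(s)$.
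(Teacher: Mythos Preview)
Your proposal is correct and follows essentially the same approach as the paper: the paper gives no proof of Theorem~\ref{thm:tri2} beyond referring to the supplement~\cite{suppl} and the parallel sketch for Theorem~\ref{thm:sq2}, and your plan---writing each of the three pieces of $\Phi^{}_{\triangle,\mathsf{wr}}(s)$ as a convolution $c*w*b$ with $b=\chi^{}_{-3}*1$, applying Euler's summation formula~\eqref{eq:euler-sum} to the constrained inner sum, and assembling via the hyperbola identities~\eqref{eq:diri-conv1}--\eqref{eq:diri-conv2}---is exactly that sketch transported from the square to the hexagonal case. The only point to watch is that the Möbius-type factor in the hexagonal case also carries the $\tfrac{1}{1+3^{-s}}$ from Eqs.~\eqref{eq:tri-even}--\eqref{eq:tri-odd}, which you have recorded correctly.
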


The number $L'(1,\chi^{}_{-3})$ can be computed efficiently as well,
via a formula involving the arithmetic-geometric mean (see~\cite{Moree}), 
and reads
\begin{equation}
\frac{L'(1,\chi^{}_{-3})}{L(1,\chi^{}_{-3})} 
    \, = \, \log\left(\frac{2_{}^{\frac{3}{4}}\ts M \! \left(1,
          \cos (\frac{\pi}{12})
          \right)^2 \,   e^\gamma}{3}\right)
    \, = \, \log\left(\frac{2^4 \pi^4\ts e^\gamma}{
           3^{\frac{3}{2}}\, \Gamma\!\left(\frac{1}{3}\right)^6}\right)
    \, \approx \, 0.3682816 \ts .
\end{equation}

Above and in the previous section, we have seen that the asymptotic
growth rate for the hexagonal and square lattice is of the form $c_1
x\log(x) + c_2 x + \OO\bigl(x^{3/4}\log(x)\bigr)$.  Actually,
numerical calculations suggest that the error term is $\OO(x^{1/2})$
or maybe even slightly better.

Let us now see what we can say about the other planar lattices.

\section{The general case}\label{sec:gen}

\subsection{Existence of well-rounded sublattices}\label{sub:existence}

Recall from Section~\ref{sec:tools} that a lattice allows a
well-rounded sublattice if and only if it contains a rectangular or
square sublattice. The following lemma contains several reformulations
of this property.
\begin{lemma}\label{lem:BRS}
  Let\/ $\Gam$ be any planar lattice. There are natural bijections
  between the following objects:
\begin{enumerate}
\item \emph{Rational orthogonal frames} for $\Gam$, that is, unordered
  pairs $\QQ w, \QQ z$ of perpendicular ($w \bot z$), one-dimensional
  subspaces of the rational space $\QQ \Gam$ generated by $\Gam$ (so
  we may assume $w, z \in \Gam$).
\item Unordered pairs $\{\pm R\}$ of coincidence reflections of
  $\Gam$; from now on, we shall simply write $\pm R$ for such a pair.
\item \emph{Basic} rectangular or square sublattices $\Lam \sbe \Gam$,
  where `basic' means that $\Lam = \langle w, z \rangle _\ZZ$ with $w,
  z$ primitive in $\Gam$ (so $\QQ w \cap \Gam = \ZZ w$ and $\QQ z \cap
  \Gam = \ZZ z$). We shall call them \emph{BRS sublattices} for short.
\item Four-element subsets $\{\pm w, \pm z\} \subset \Gam$ 
  of non-zero primitive
  lattice vectors with $w \perp z$.
\end{enumerate}
\end{lemma}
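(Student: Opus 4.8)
The plan is to establish the four bijections by building an explicit cycle of maps $(1)\to(2)\to(3)\to(4)\to(1)$ and checking that the composition around the cycle is the identity on each set; since all four sets are then in a chain of surjections that composes to the identity, they must all be bijections. The geometric content is elementary once the right objects are identified, so the emphasis should be on setting up the maps cleanly. I would begin by recalling from Section~\ref{sec:tools} that $\OC(\Gam) = \mathrm{O}(\QQ\Gam)$, so a \emph{coincidence reflection} of $\Gam$ is precisely an orthogonal reflection of the plane whose mirror and normal lines are both rational with respect to $\Gam$; this is the fact that glues (1) and (2) together.

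First, from a rational orthogonal frame $\QQ w \perp \QQ z$ (item~(1)), one gets the reflection $R$ in the line $\QQ w$ (equivalently, $-R$ is the reflection in $\QQ z$); since both lines are rational, $R \in \mathrm{O}(\QQ\Gam) = \OC(\Gam)$, giving the pair $\pm R$ of item~(2), and conversely the $(+1)$- and $(-1)$-eigenspaces of any coincidence reflection are a pair of perpendicular rational lines. Next, from the frame one passes to item~(3): because $\QQ w$ and $\QQ z$ are rational lines, $\QQ w \cap \Gam$ and $\QQ z \cap \Gam$ are rank-one sublattices, hence of the form $\ZZ w_0$, $\ZZ z_0$ with $w_0, z_0$ primitive; set $\Lam = \langle w_0, z_0\rangle_\ZZ$, which is a rectangular (possibly square) BRS sublattice, independent of the choice of generators up to sign. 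For the step $(3)\to(4)$, a BRS sublattice $\Lam = \langle w, z\rangle_\ZZ$ with $w,z$ primitive and perpendicular yields the four-element set $\{\pm w, \pm z\}$ of its primitive vectors lying on the two axes — here one should note that $w, z$ are uniquely determined up to sign by $\Lam$, since they are (up to sign) the primitive vectors of $\Gam$ on the two perpendicular rational lines spanned by $\Lam$. Finally $(4)\to(1)$ sends $\{\pm w, \pm z\}$ to the frame $\{\QQ w, \QQ z\}$, which is a pair of perpendicular rational lines since $w \perp z$ and $w, z \in \Gam$. Tracing an element of~(1) around the full cycle returns the same pair of rational lines, so the cycle composes to the identity; the analogous check starting from~(2), (3) or~(4) is the same computation read from a different starting point, and I would simply remark that it is immediate.

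The only genuinely substantive point — and the one I would state most carefully — is the well-definedness and injectivity built into item~(3): that a \emph{basic} rectangular or square sublattice is recovered uniquely from the pair of rational lines it spans, i.e.\ that passing from $\Lam$ to $(\QQ w, \QQ z)$ and back to $\QQ w \cap \Gam, \QQ z \cap \Gam$ returns $\Lam$ itself. This uses exactly the definition of ``basic'': $w, z$ are chosen primitive in $\Gam$, so $\ZZ w = \QQ w \cap \Gam$ and $\ZZ z = \QQ z \cap \Gam$, whence $\langle w, z\rangle_\ZZ$ is canonically attached to the frame. Everything else — that a reflection is determined by its eigenlines, that a rational line meets $\Gam$ in a rank-one lattice, that $\pm R \leftrightarrow \{\QQ w, \QQ z\}$ matches up the unordered-pair structure on both sides — is routine and I would dispatch it in a sentence or two. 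I do not expect a real obstacle here; the ``hard part'' is purely organisational, namely phrasing the four maps so that the cyclic composition is visibly the identity rather than verifying twelve separate round-trips.
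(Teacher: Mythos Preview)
Your proposal is correct, and in fact it supplies considerably more detail than the paper itself: the paper states Lemma~\ref{lem:BRS} without proof, treating the bijections as essentially definitional (the text immediately after the statement simply adopts the notation $\CR_\Gam$ and moves on). Your cycle $(1)\to(2)\to(3)\to(4)\to(1)$ is a clean way to organise what the authors regard as evident, and your identification of the one substantive point --- that ``basic'' is precisely the condition making $\Lam \mapsto \{\QQ w,\QQ z\} \mapsto \langle \QQ w\cap\Gam,\ \QQ z\cap\Gam\rangle_\ZZ$ a genuine inverse --- is exactly right and matches how the notion is used later (e.g.\ in the notation $\Gam_R$, $\Gam_w$ introduced just after the lemma).
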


Given $\Gam$, we use the notation $\CR = \CR_\Gam$ for the set of all
pairs $\pm R$ of coincidence reflections of $\Gam$. So $\CR_\Gam$ is
in natural bijection with any of the four sets described in
Lemma~\ref{lem:BRS}.  For the rest of the paper, we introduce the
following notation, based on Lemma~\ref{lem:BRS}.  For $\pm R \in
\CR_\Gam$, we denote by $\Gam_R$ (rather than $\Gam_{\pm R}$) the
corresponding BRS sublattice. Explicitly, this is
\[
\begin{split}
  \Gam_R & \, = \, \Gam \cap \Fix (R) \oplus \Gam \cap \Fix (-R) \\
         & \, = \, \ZZ w \oplus \ZZ z, \; 
           \text{ where } Rw = w,\, Rz = -z
\end{split}
\]
(thus $w, z$ are primitive in $\Gam$). In accordance with part (2) of
Lemma~\ref{lem:BRS}, we have $\Gam_{R}=\Gam_{-R}$, with the roles of
$w$ and $z$ interchanged.  If we start with an arbitrary primitive
vector $w \in \Gam$, we similarly write
\[ 
   \Gam_w \, := \, \ZZ w \oplus \ZZ z, \; \text{ where } z 
  \perp w,\, z  \text{ primitive in } \Gam.
\]
The four element set $\{\pm w, \pm z\}$ is uniquely determined by any
of its members, and $\Gam_w$ is the unique BRS-sublattice belonging to
this set, according to part (4) of the remark.

In addition to $\Gam_R$, there is a second sublattice of $\Gam$ which
is invariant under $R$ and contains $w, z$ as primitive vectors. This
is
\begin{equation}\label{eq:gamma-tilde}
  \widetilde \Gam _R \, := \, \Bigl\langle \frac{w+z}{2}, 
   \frac{w-z}{2}\Bigr\rangle_{\ZZ} \, ,
\end{equation}
the unique superlattice of $\Gam_R$ containing $\Gam_R$ with index $2$
in such a way that $w,z$ are still primitive in $\widetilde \Gam _R$.
By the way, it is a purely algebraic fact that, if $R$ is a non-trivial
automorphism of order $2$ of an abstract lattice $\Lam$ (free
$\ZZ$-module) of rank $2$, i.e. $R^2 = \id \ne \pm R$, then either
$\Lam$ has a $\ZZ$-basis $w,z$ of eigenvectors of $R$ (so $Rz=z,\,
Rw=-w$), or $\Lam$ possesses a $\ZZ$-basis $u,v$ with $Ru=v$. Thus,
already on the level of abstract reflections, one can distinguish
between `rectangular type' and `rhombic type' of a reflection acting
on a lattice. In the situation considered above, the reflection $R$ on
$\Gam_R$ is of rectangular type, and the lattice $\Gam_R$ itself thus
of rectangular or square Bravais type, whereas the reflection $R$ on
$\widetilde \Gam_R$ is of rhombic type, which implies that $\widetilde
\Gam_R$ is of rhombic-cr, square or hexagonal Bravais type. The
significance of $\widetilde \Gam_R$ is explained by the following
lemma.
\begin{lemma}\label{lem:R-over-lattices}
  Given $\Gam$ and $\pm R \in \CR_\Gam$ as above, let $\Lam \supseteq
  \Gam_R = \langle w,z\rangle$ be an $R$-invariant superlattice
  containing\/ $w,z$ as primitive vectors. Then, either $\Lam = \Gam_R$
  or $\Lam = \widetilde \Gam_R$.
\end{lemma}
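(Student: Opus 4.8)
The plan is to argue purely on the level of the abstract rank-$2$ lattice $\Lam$ together with the involution $R$, using the index constraint. First I would observe that since $\Gam_R = \ZZ w \oplus \ZZ z \subseteq \Lam$ with $w,z$ primitive in $\Lam$, the quotient $\Lam / \Gam_R$ is a finite cyclic group: indeed, the discriminant form / elementary divisor theory gives $\Lam/\Gam_R \cong \ZZ/d_1 \oplus \ZZ/d_2$ with $d_1 \mid d_2$, and primitivity of $w$ (hence of $z = $ the other eigenvector) in $\Lam$ forces $d_1 = 1$. So $[\Lam : \Gam_R] = m$ for some $m \ge 1$, and there is a vector $u \in \Lam$ with $\Lam = \langle \Gam_R, u\rangle_\ZZ$ and $m u \in \Gam_R$, say $m u = \alpha w + \beta z$ with $0 \le \alpha, \beta < m$ and $\gcd(\alpha,\beta,m)=1$.

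Next I would use $R$-invariance. Since $Ru \in \Lam$ and $R$ fixes $\Gam_R$ setwise (acting as $+1$ on $\ZZ w$, $-1$ on $\ZZ z$), we get $m(u + Ru) = 2\alpha w \in \Gam_R$ and $m(u - Ru) = 2\beta z \in \Gam_R$, while $u + Ru, u - Ru \in \Lam$. Because $w$ is primitive in $\Lam$, the vector $\tfrac{2\alpha}{m} w$ lies in $\Lam$ only if $m \mid 2\alpha$, and similarly $m \mid 2\beta$. Combined with $0 \le \alpha,\beta < m$ this yields $\alpha, \beta \in \{0, m/2\}$; the coprimality condition $\gcd(\alpha,\beta,m)=1$ together with $mu \notin \Gam_R$ (i.e.\ $(\alpha,\beta)\neq(0,0)$ unless $m=1$) then forces $m \in \{1,2\}$. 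If $m=1$ then $\Lam = \Gam_R$. If $m = 2$, then $(\alpha,\beta)$ is one of $(1,0),(0,1),(1,1)$; the first two contradict primitivity of $w$ resp.\ $z$ in $\Lam$ (e.g.\ $(\alpha,\beta)=(1,0)$ gives $w/2 \in \Lam$), so $(\alpha,\beta) = (1,1)$, i.e.\ $u \equiv \tfrac{w+z}{2} \bmod \Gam_R$, and hence $\Lam = \langle w, z, \tfrac{w+z}{2}\rangle_\ZZ = \bigl\langle \tfrac{w+z}{2}, \tfrac{w-z}{2}\bigr\rangle_\ZZ = \widetilde\Gam_R$.

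I expect the only real subtlety to be bookkeeping: being careful that the elementary-divisor reduction of $\Lam/\Gam_R$ really is governed by primitivity of \emph{both} $w$ and $z$ (which is automatic once one is primitive, since they span the two eigenspaces of the order-$2$ automorphism and $R$ permutes the elementary divisors), and keeping the case analysis on $(\alpha,\beta)$ clean. Everything else is the standard structure of index-$2$ superlattices, already sketched in the paragraph preceding the lemma, so the proof can be quite short; in fact one may simply invoke that any $R$-invariant superlattice in which $w,z$ stay primitive corresponds, via the algebraic dichotomy recalled there (basis of eigenvectors versus basis swapped by $R$), to the two possibilities $\Gam_R$ and $\widetilde\Gam_R$, and then note that an index larger than $2$ is impossible because it would destroy primitivity of $w$ or $z$. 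I would present the explicit computation above as it is self-contained and leaves no gaps.
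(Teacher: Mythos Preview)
Your proof is correct and is essentially the paper's argument: the paper simply extends the primitive vector $z$ to a $\ZZ$-basis $u,z$ of $\Lam$ with $u=\tfrac{1}{m}w+\tfrac{k}{m}z$, $m=[\Lam:\Gam_R]$, and lets the condition $Ru\in\Lam$ force $m\in\{1,2\}$ and $k\in\{0,1\}$ --- exactly the computation you carry out via the cyclic quotient $\Lam/\Gam_R$ and its generator. Two harmless slips: ``$mu\notin\Gam_R$'' should read ``$u\notin\Gam_R$'', and your aside that primitivity of $z$ follows from that of $w$ is not actually true (take $\Lam=\ZZ w\oplus\ZZ\tfrac{z}{2}$), but this is irrelevant since both primitivities are explicit hypotheses of the lemma.
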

\begin{proof}
  Since $z$ is primitive, $\Lam$ has a $\ZZ$-basis $u,z$, where $u$ is
  of the form $u = \frac{1}{m}w + \frac{k}{m}z$ with $m = [\Lam :
  \Gam_R]$ and $ 0 \le k <m$. The condition $Ru \in \Lam$ immediately
  leads to $m \in \{1,2\}$ and $k \in\{0, 1\}$, respectively.
\end{proof}
\begin{lemma}\label{lem:even-index}
  Given $\Gam$ and $\pm R \in \CR_\Gam$ as above, $\widetilde \Gam_R$
  is contained in $\Gam$ if and only if the index $[\Gam : \Gam_R]$ is
  even.
\end{lemma}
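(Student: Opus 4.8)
The plan is to relate the index $[\Gam : \Gam_R]$ to the index $[\widetilde\Gam_R : \Gam_R] = 2$ and deduce the containment $\widetilde\Gam_R \subseteq \Gam$ from a divisibility statement. Recall $\Gam_R = \ZZ w \oplus \ZZ z$ with $w, z$ primitive in $\Gam$ and $w \perp z$, while $\widetilde\Gam_R = \langle \frac{w+z}{2}, \frac{w-z}{2}\rangle_\ZZ$, which contains $\Gam_R$ with index $2$. The key structural input is Lemma~\ref{lem:R-over-lattices}: any $R$-invariant lattice between $\Gam_R$ and (the rational span) that still has $w, z$ primitive is either $\Gam_R$ or $\widetilde\Gam_R$. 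I would apply this to $\Lam := \Gam_R + (\Gam \cap \QQ\Gam_R)$, or more simply, note that $\Gam$ itself is $R$-invariant, so $\Gam \cap \widetilde\Gam_R$ is an $R$-invariant lattice sandwiched between $\Gam_R$ and $\widetilde\Gam_R$ in which $w, z$ are still primitive; hence it equals $\Gam_R$ or $\widetilde\Gam_R$, i.e. $\widetilde\Gam_R \subseteq \Gam$ iff $\Gam \cap \widetilde\Gam_R = \widetilde\Gam_R$.

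First I would establish the ``only if'' direction: if $\widetilde\Gam_R \subseteq \Gam$, then since $\Gam_R \subseteq \widetilde\Gam_R \subseteq \Gam$ and $[\widetilde\Gam_R : \Gam_R] = 2$, multiplicativity of the index gives $[\Gam : \Gam_R] = 2\,[\Gam : \widetilde\Gam_R]$, which is even. For the ``if'' direction, suppose $n := [\Gam : \Gam_R]$ is even. I want to produce a vector of $\Gam$ lying in $\widetilde\Gam_R \setminus \Gam_R$, e.g. $\frac{w+z}{2}$ (which, together with $z$, generates $\widetilde\Gam_R$). Consider the quotient group $\Gam / \Gam_R$, a finite abelian group of even order $n$. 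Since $w$ and $z$ are primitive in $\Gam$, the cyclic subgroups they generate in $\Gam/\Gam_R$ are trivial — that is, $\Gam/\Gam_R$ has the property that no nonzero multiple of the class of $w$ or of $z$... wait, more carefully: primitivity of $w$ means $\QQ w \cap \Gam = \ZZ w$, so the image of $w$ in $\Gam/\Gam_R$ generates a subgroup that injects appropriately. I would instead argue: pick a $\ZZ$-basis of $\Gam$ adapted to $\Gam_R$; writing $\Gam = \langle u_1, u_2\rangle$ with $\Gam_R = \langle d_1 u_1, d_2 u_2\rangle$ via Smith/elementary divisor form, with $d_1 \mid d_2$ and $n = d_1 d_2$, and then translate primitivity of $w, z$ into constraints. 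The cleanest route: because $\Gam$ is $R$-invariant, decompose over $\QQ$ into the $\pm 1$ eigenspaces $\Fix(R)$ and $\Fix(-R)$; then $\Gam \cap \Fix(R) = \ZZ w$ and $\Gam \cap \Fix(-R) = \ZZ z$ (this is exactly how $\Gam_R$ was defined), so $\Gam_R = (\Gam \cap \Fix(R)) \oplus (\Gam \cap \Fix(-R))$, and $\Gam / \Gam_R$ embeds into $(\Fix(R)/\ZZ w) \times (\Fix(-R)/\ZZ z)$ in a way that the projection to each factor is... actually the standard fact here is that for $\Gam$ between $\Gam_R$ and a lattice with basis $\frac{1}{m}w, \frac{1}{m}z$ scaled appropriately, $\Gam/\Gam_R$ is cyclic, generated by the class of $\frac{1}{m}(w+z)$ for the relevant $m$ — combined with Lemma~\ref{lem:R-over-lattices} reasoning applied after localizing at $2$.

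The main obstacle I anticipate is the ``if'' direction: translating ``$[\Gam:\Gam_R]$ even'' into ``$\frac{w+z}{2} \in \Gam$'' requires knowing that the $2$-part of $\Gam/\Gam_R$ is forced to be cyclic and generated by the right element. This is where Lemma~\ref{lem:R-over-lattices} does the real work: apply it with $\Lam := \Gam \cap \frac{1}{2}\Gam_R$. This $\Lam$ is $R$-invariant, contains $\Gam_R$, and $w, z$ remain primitive in $\Lam$ (as they are primitive in the larger $\Gam$); hence $\Lam \in \{\Gam_R, \widetilde\Gam_R\}$. So either $\Gam \cap \frac{1}{2}\Gam_R = \Gam_R$, meaning $\Gam$ contains no ``half-vectors'' of $\Gam_R$ at all, or $\Gam \cap \frac{1}{2}\Gam_R = \widetilde\Gam_R \subseteq \Gam$. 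In the first case I must show $[\Gam : \Gam_R]$ is odd: if $2 \mid [\Gam:\Gam_R]$, then $\Gam/\Gam_R$ has a subgroup of order $2$, i.e. there is $v \in \Gam \setminus \Gam_R$ with $2v \in \Gam_R$, so $v \in \frac{1}{2}\Gam_R \cap \Gam = \Gam_R$, a contradiction. Hence $[\Gam:\Gam_R]$ even forces the second case, $\widetilde\Gam_R \subseteq \Gam$. Conversely $\widetilde\Gam_R \subseteq \Gam$ gives $[\Gam:\Gam_R] = 2[\Gam:\widetilde\Gam_R]$ even, completing the equivalence. I would write this up in about ten lines, the crux being the clean application of Lemma~\ref{lem:R-over-lattices} to $\Gam \cap \frac{1}{2}\Gam_R$.
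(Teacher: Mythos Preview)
Your final argument is essentially the paper's own proof, but you take an unnecessary detour through Lemma~\ref{lem:R-over-lattices} that introduces a small gap. You assert that $\Lam := \Gam \cap \tfrac{1}{2}\Gam_R$ is $R$-invariant; this would follow if $\Gam$ itself were $R$-invariant, but $R$ is only a \emph{coincidence} reflection, so $R\Gam \neq \Gam$ in general. (The claim happens to be true a posteriori, but verifying it amounts to doing the direct argument anyway, so the appeal to Lemma~\ref{lem:R-over-lattices} is circular.) The paper bypasses this entirely: an element of order~$2$ in $\Gam/\Gam_R$ is represented by some $\tfrac{1}{2}(aw+bz)$ with $a,b \in \{0,1\}$; primitivity of $w$ and $z$ in $\Gam$ rules out $(a,b)=(1,0)$ and $(0,1)$, forcing $(a,b)=(1,1)$, so $\tfrac{w+z}{2}\in\Gam$ and hence $\widetilde\Gam_R\subseteq\Gam$. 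Your step ``if $2\mid[\Gam:\Gam_R]$ then there exists $v\in\Gam\setminus\Gam_R$ with $2v\in\Gam_R$'' is exactly the same starting point; just finish by inspecting the four cosets of $\Gam_R$ in $\tfrac{1}{2}\Gam_R$ directly, rather than invoking Lemma~\ref{lem:R-over-lattices}. The ``only if'' direction via multiplicativity of the index is identical to the paper's.
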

\begin{proof}
  If $[\Gam : \Gam_R] = [\Gam : \langle w,z \rangle] $ is even and
  $\frac{1}{2} (aw+bz)$ with $ a,b \in \{0,1\}$ represents an element
  of order $2$ in the factor group $\Gam / \Gam_R$, then, since $w/2,
  z/2 \notin \Gam$, we must have $a=b=1$, leading to the sublattice
  $\widetilde \Gam_R$. The converse is clear.
\end{proof}
\begin{coro}\label{cor:even-index}
  For any pair of coincidence reflections $\pm R \in \CR_\Gam$, the
  coincidence site lattice $\Gam (R) = \Gam \cap R\Gam$ is equal to
  $\Gam_R$ or to $\widetilde \Gam_R$. The latter occurs if and only if
  the index\/ $[\Gam : \Gam_R]$ is even. \qed
\end{coro}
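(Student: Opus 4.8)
The plan is to deduce Corollary~\ref{cor:even-index} from the two preceding lemmas, using the fact that the coincidence site lattice $\Gam(R) = \Gam \cap R\Gam$ is itself an $R$-invariant sublattice of $\Gam$ that contains $w$ and $z$ as primitive vectors, so that Lemma~\ref{lem:R-over-lattices} applies to it directly.

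First I would verify the containments that make $\Gam(R)$ eligible for Lemma~\ref{lem:R-over-lattices}. By definition $R \in \OC(\Gam)$, so $\Gam(R) = \Gam \cap R\Gam$ is a sublattice of $\Gam$ of full rank, and it is $R$-invariant: indeed $R\bigl(\Gam \cap R\Gam\bigr) = R\Gam \cap R^2\Gam = R\Gam \cap \Gam$ since $R^2 = \id$ (a reflection is an involution). Next, since $Rw = w$ and $Rz = -z$, both $w$ and $z$ lie in $R\Gam$ as well as in $\Gam$, hence $w, z \in \Gam(R)$; moreover $\langle w, z\rangle_\ZZ = \Gam_R \subseteq \Gam(R)$. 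Since $w$ and $z$ are already primitive in $\Gam$ and $\Gam(R) \subseteq \Gam$, they are a fortiori primitive in $\Gam(R)$. Applying Lemma~\ref{lem:R-over-lattices} with $\Lam = \Gam(R)$ then forces $\Gam(R) = \Gam_R$ or $\Gam(R) = \widetilde\Gam_R$.

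It remains to pin down which alternative occurs. If $\Gam(R) = \widetilde\Gam_R$, then in particular $\widetilde\Gam_R \subseteq \Gam$, so Lemma~\ref{lem:even-index} gives that $[\Gam : \Gam_R]$ is even. Conversely, if $[\Gam : \Gam_R]$ is even, then $\widetilde\Gam_R \subseteq \Gam$ by Lemma~\ref{lem:even-index}; but $\widetilde\Gam_R$ is $R$-invariant (the reflection $R$ swaps $\tfrac{w+z}{2}$ and $\tfrac{w-z}{2}$) and contains $w,z$ primitively, so $\widetilde\Gam_R \subseteq \Gam \cap R\Gam = \Gam(R)$, while also $\Gam(R) \in \{\Gam_R, \widetilde\Gam_R\}$ with $[\widetilde\Gam_R : \Gam_R] = 2$; since $\Gam_R \subsetneq \widetilde\Gam_R \subseteq \Gam(R)$ and $\Gam(R)$ cannot properly contain $\widetilde\Gam_R$, we get $\Gam(R) = \widetilde\Gam_R$. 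Thus $\Gam(R) = \widetilde\Gam_R$ iff $[\Gam : \Gam_R]$ is even, and $\Gam(R) = \Gam_R$ otherwise.

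The only mild subtlety — and the one point worth stating carefully rather than the routine group-theory — is the reverse implication: from $[\Gam:\Gam_R]$ even one must actually identify $\Gam(R)$ with $\widetilde\Gam_R$ and not merely know it is one of the two candidates. The argument above closes this by a containment chain, using that $\widetilde\Gam_R$ is $R$-stable and sits between $\Gam_R$ and $\Gam$, together with the index-$2$ bound from Lemma~\ref{lem:R-over-lattices}. No genuinely new input beyond Lemmas~\ref{lem:R-over-lattices} and~\ref{lem:even-index} is needed, which is why the statement is flagged as a corollary.
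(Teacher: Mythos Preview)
Your proof is correct and is exactly the argument the paper has in mind: the corollary is stated with a bare \qed\ and no proof, signalling that it follows immediately from Lemmas~\ref{lem:R-over-lattices} and~\ref{lem:even-index} in the way you spell out. Your careful handling of the reverse implication (showing $\widetilde\Gam_R \subseteq \Gam(R)$ via $R$-invariance once $\widetilde\Gam_R \subseteq \Gam$) is the right way to close the only non-automatic step.
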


The following basic result partitions the set of all planar lattices
admitting a well-rounded (or rectangular) sublattice into two disjoint
classes, as announced at the end of the introduction. Clearly, a
rational lattice possesses infinitely many BRS sublattices, since for any
non-zero lattice vector $v$, the orthogonal subspace of $v$ also
contains a non-zero lattice vector (simply by solving a linear
equation with rational coefficients). 
In contrast, the non-rational case can be analysed as follows.
\begin{prop}\label{prop:wr-exist}
  Let\/ $\Gam$ be non-rational planar lattice which possesses a
  rectangular sublattice, so that\/ $\CR_\Gam \ne \varnothing$ by
  Lemma~$\ref{lem:BRS}$. Then, $|\CR_\Gam | =1$, whence $\Gam$
  possesses exactly one BRS sublattice, and one pair of coincidence
  reflections.
\end{prop}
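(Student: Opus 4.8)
The plan is to argue by contradiction: if $\Gam$ possessed two distinct pairs of coincidence reflections, their composition would be a non-trivial rational coincidence rotation, and I would show that the mere existence of such a rotation already forces $\Gam$ to be rational. So suppose $\pm R_1,\pm R_2\in\CR_\Gam$ with $\pm R_1\neq\pm R_2$. Since $\OC(\Gam)=\mathrm{O}(\QQ\Gam)$ is a group, the composition $\varrho:=R_1R_2$ lies again in $\OC(\Gam)$; it has determinant $1$ because both $R_i$ are reflections, and $\varrho\neq\pm\id$, as $\varrho=\id$ would give $R_2=R_1$ and $\varrho=-\id$ would give $R_2=-R_1$, either of which contradicts $\pm R_1\neq\pm R_2$. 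Hence $\varrho$ is a non-trivial coincidence rotation of $\Gam$. Choosing a $\QQ$-basis of $\QQ\Gam$ inside $\Gam$, it is represented by a matrix $M\in\mathrm{SL}_2(\QQ)$, which over $\RR$ is a genuine rotation through some angle $\theta\notin\pi\ZZ$.

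Next I would convert the quadratic invariance of the Gram matrix $G$ of $\Gam$ (in the chosen basis) into a linear one. By the Cayley--Hamilton theorem, $M^{-1}=(\tr M)\ts I-M$, so $M^{\top}GM=G$ is equivalent to the homogeneous linear equation $M^{\top}G+GM=(\tr M)\ts G$ in the entries of the symmetric matrix $G$. This is a linear system with rational coefficients, so its solution space inside the three-dimensional space $\mathrm{Sym}_2$ of symmetric $2\times2$ matrices has the same dimension over $\QQ$ as over $\RR$. The step I expect to be the only genuinely non-formal one is the claim that this common dimension equals $1$: since $M$ is an isometry of the positive definite form $G$, a $G$-orthonormal change of basis conjugates $M$ to a standard planar rotation $\neq\pm I$, whose centraliser among symmetric matrices consists of scalar matrices only, so $M$ preserves, up to a factor, only $G$ itself. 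Consequently the rational solution space is spanned by a single rational symmetric matrix $G_0$, whence $G=\lambda G_0$ for some $\lambda\in\RR\setminus\{0\}$. As $G_0$ is then a non-zero rational multiple of a positive definite matrix, the similarity class of $\Gam$ contains a lattice with rational Gram matrix, i.e.\ $\Gam$ is rational --- contradicting the hypothesis.

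This contradiction yields $|\CR_\Gam|\le1$, and since $\CR_\Gam\neq\varnothing$ by Lemma~\ref{lem:BRS} (because $\Gam$ has a rectangular sublattice), we obtain $|\CR_\Gam|=1$. The remaining assertions, that $\Gam$ carries exactly one BRS sublattice and exactly one pair of coincidence reflections, are then immediate from the natural bijections in Lemma~\ref{lem:BRS}. An alternative to the conjugation argument in the middle paragraph is to observe that $S\mapsto M^{\top}SM-S$ is a $\QQ$-linear endomorphism of $\mathrm{Sym}_2$ and to compute, using that $M$ has eigenvalues $e^{\pm\ii\theta}$ with $e^{2\ii\theta}\neq1$, that its kernel has real dimension exactly $1$ (the symplectic form accounting for the second fixed direction among all, not only symmetric, matrices); one may also simply invoke the known description of $\OC(\Gam)$ for planar lattices in \cite{Baake97}.
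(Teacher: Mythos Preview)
Your argument is correct, but it takes a genuinely different route from the paper's proof. The paper works directly with the existing rectangular sublattice: scaling so that its orthogonal basis $v,w$ has $|v|=1$ and $|w|^2=c$, any second orthogonal pair $u=rv+sw$, $u'=r'v+s'w$ with $rs\neq 0$ forces $rr'+css'=0$, hence $c=-rr'/ss'\in\QQ$, and $\Gam$ is rational. This is a four-line coordinate computation.

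Your approach is more structural: you compose the two reflections to obtain a non-trivial coincidence rotation $\varrho$, represent it by $M\in\mathrm{SL}_2(\QQ)$, linearise $M^{\top}GM=G$ via Cayley--Hamilton, and exploit that a planar rotation $\neq\pm I$ fixes a unique quadratic form up to scalars, so the rational solution space is one-dimensional and $G$ must be a real multiple of a rational matrix. This is longer and uses more machinery, but it actually proves the stronger statement that \emph{any} non-trivial coincidence rotation (not just one coming from two reflections) already forces rationality of $\Gam$; equivalently, $\SOC(\Gam)=\{\pm\id\}$ for every non-rational planar lattice. That is a pleasant structural insight that the paper's quick computation does not isolate. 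Conversely, the paper's argument has the advantage of being completely elementary and of making transparent precisely which quantity (the squared length ratio of the orthogonal basis) becomes rational.
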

\begin{proof}
  $\Gam$ has a sublattice $\Lam$ with an orthogonal basis $v,w$, where
  we may assume $|v|=1$ and $|w|^2 = c > 0$. Now assume that there is
  a further vector $u=rv+sw$ with $rs \ne 0$ admitting an orthogonal,
  non-zero vector $u' = r'v + s'w$. Then, $rr' + c ss' = 0$ and
  necessarily $s' \ne 0$, thus $c = -rr'/ss' \in \QQ$. Therefore
  $\Lam$, and thus also $\Gam$, is rational.
\end{proof}
The previous result (with a slightly more complicated proof) is also
found in \cite{Kuehn}, Lemma 2.5 and Remark 2.6. Our approach suggests
the following distinction of cases.

\begin{prop}\label{prop:wr-exist-expl}
  Let\/ $\Gam=\langle 1,\tau\rangle_\ZZ$ be a lattice in\/
  $\RR^2\simeq\CC$, and write\/ $n=|\tau|^2$ and\/
  $t=\tau+\bar{\tau}$. Then, $\Gam$ has a well-rounded sublattice if
  and only if one of the following conditions is satisfied:
\begin{enumerate}
  \item \label{enum:all-rat} 
    $\Gam$ is rational, i.e.\ both $t$ and $n$ are rational;
  \item \label{enum:t-rat} 
    $t$ is rational, but $n$ is not;
  \item \label{enum:t-irr} 
    $t$ is irrational, and there exist $q,r\in\QQ$ with 
    $\sqrt{q+r^2}\in\QQ$ and $n=q+rt$.
\end{enumerate}
\end{prop}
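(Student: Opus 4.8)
The plan is to turn the geometric question into a single Diophantine condition over $\QQ$, and then to run a short case split on whether $t$ is rational. For the reduction, recall the criterion from the start of Section~\ref{sub:existence} together with Lemma~\ref{lem:BRS}: $\Gam=\langle 1,\tau\rangle_\ZZ$ admits a well-rounded sublattice if and only if it contains two nonzero perpendicular vectors. Two nonzero perpendicular vectors are automatically $\RR$-linearly independent, and multiplying through by a common denominator carries a perpendicular pair from $\QQ\Gam=\QQ+\QQ\tau$ back into $\Gam$, where it spans a genuine rectangular (or square) sublattice; so the existence of a well-rounded sublattice is equivalent to the existence of nonzero $w=a+b\tau$, $z=c+d\tau$ with $a,b,c,d\in\QQ$ and $\RE(w\bar z)=0$. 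Since $\RE(\tau)=\tfrac{1}{2}t$ and $|\tau|^2=n$, this last condition unwinds to
\[
   2ac+2bd\ts n+(ad+bc)\ts t \, = \, 0 ,
\]
so the proposition becomes the assertion that this equation has a nonzero rational solution exactly when (1), (2) or (3) holds. I would also note at this point that $\Gam$ is rational precisely when its Gram matrix $\left(\begin{smallmatrix}1 & t/2\\ t/2 & n\end{smallmatrix}\right)$ is rational up to a positive scalar, i.e.\ when $t$ and $n$ are both rational, so that conditions (1) and (2) together are nothing but the case $t\in\QQ$.

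If $t$ is rational, I would simply exhibit a solution and conclude that a well-rounded sublattice always exists, which settles cases (1) and (2): take $w=1$ and $z=-\tfrac{1}{2}t+\tau$. This $z$ lies in $\QQ\Gam$, is nonzero because $\IM z=\IM\tau\neq0$, and has vanishing real part, hence $w\perp z$.

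It then remains to treat $t$ irrational, where (1) and (2) both fail and one must show a well-rounded sublattice exists if and only if (3) holds. For the forward direction I would start from a nonzero rational solution $(a,b),(c,d)$ and first dispose of the degenerate cases: $b=0$ forces $a\neq0$, and then the equation gives $2c+dt=0$ with $d\neq0$, making $t$ rational, a contradiction, and symmetrically $d\neq0$. With $b,d\neq0$ one may divide by $2bd$ to read off $n=q+rt$ with $q:=-ac/(bd)$ and $r:=-(ad+bc)/(2bd)$ rational, and a short computation gives the identity $q+r^2=\bigl((ad-bc)/(2bd)\bigr)^2$, so $\sqrt{q+r^2}\in\QQ$ and (3) holds. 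For the converse, given (3) I put $s:=\sqrt{q+r^2}\in\QQ$ and propose the pair $w:=(s-r)+\tau$ and $z:=-(s+r)+\tau$ in $\QQ\Gam$; both are nonzero for the same imaginary-part reason, and $\RE(w\bar z)=-(s^2-r^2)-rt+n=-q-rt+n=0$ follows from $s^2-r^2=q$ and $n=q+rt$, so clearing denominators yields the desired sublattice.

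I do not expect a genuine obstacle: after the first reduction the argument is essentially bookkeeping. The points that need a little care are precisely the bookkeeping ones — checking that all the auxiliary vectors are truly nonzero, and ruling out the degenerate $b=0$ or $d=0$ (which, pleasingly, cannot arise exactly because they would force $t$ rational). The one mildly creative ingredient is the algebraic identity $q+r^2=\bigl((ad-bc)/(2bd)\bigr)^2$: it is what makes the square-root condition in (3) emerge so cleanly, and choosing the normalisation of $q$ and $r$ so that it appears in this form is the only step that is not purely mechanical.
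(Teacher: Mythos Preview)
Your proposal is correct and follows essentially the same route as the paper: both reduce to the orthogonality equation $ac+bd\,n+(ad+bc)\tfrac{t}{2}=0$ over $\QQ$ and then split on whether $t$ is rational. Your organisation is a bit more streamlined---you handle cases (1) and (2) together via the single explicit pair $w=1$, $z=-\tfrac{t}{2}+\tau$, and for $t$ irrational you extract $q,r$ directly from a solution and verify the identity $q+r^{2}=\bigl((ad-bc)/(2bd)\bigr)^{2}$, whereas the paper fixes $q,r$ a priori and detects solvability via the discriminant of the resulting quadratic in $b/a$; these are two sides of the same computation. Your explicit converse construction $w=(s-r)+\tau$, $z=-(s+r)+\tau$ is also a nice touch not spelled out in the paper.
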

Note that case (\ref{enum:t-irr}) includes both rational and
irrational $n$.  In the case that $n$ is rational, this means that $n$
has to be a rational square.

\begin{proof}
  Recall that $\Gam$ has a well-rounded sublattice if and only if it
  has a rectangular or a square sublattice.  This happens if and only
  if there exist integers $a,b,c,d$ such that the non-zero vectors
  $a+b\tau$ and $c+d\tau$ are orthogonal. The latter condition holds
  if and only if
\begin{equation}\label{eq:orth}
   ac+bd n+(ad+bc)\frac{t}{2} \, = \, 0
\end{equation}
has a non-trivial integral solution, where $n=|\tau|^2$ and
$t=\tau+\bar{\tau}$ are the norm and the trace of $\tau$,
respectively. In fact, there exists an integral solution if and only
if there exists a rational one. This leads to the following three
cases:
\begin{enumerate}
  \item Clearly, Eq.~(\ref{eq:orth}) has a solution if both $t$ and $n$
    are rational.
  \item Let $t\in\QQ, n\not\in\QQ$: 
    Condition (\ref{eq:orth}) is equivalent to $bd=0=ac+(ad+bc)\frac{t}{2}$.
    With $\frac{t}{2}=\frac{p}{q}$, $p,q\in\ZZ$, an integer solution is given
    by $a=1, b=0, c=p, d=-q$.
  \item Let $t\not\in\QQ$, with $n=q+rt$. 
    As $n>0$, at least one of $q$ and $r$
    is non-zero. Here, condition (\ref{eq:orth}) is equivalent to 
    $ac+bdq=0$ and $2bdr+(ad+bc)=0$. As $a=c=0$ would imply
    $a+b\tau=0$ or $c+d\tau=0$, we may assume w.l.o.g.\ that $a\neq 0$. 
    This gives $c=-\frac{bdq}{a}$ and
    $1+2\frac{b}{a}r-\left(\frac{b}{a}\right)^2 q=0$, where we have assumed
    $d\ne 0$ in the latter equation, since otherwise $c+d\tau=0$.
    The latter has a rational solution if and only if
    $r^2+q$ is a square. 
\end{enumerate}
Finally, we have to check that the remaining case does not allow for integral
solutions. Let $t$ and $n$ be irrational and assume that they are
independent over $\QQ$. This clearly requires
$ac=bd=ad+bc=0$, which implies $a+b\tau=0$ or $c+d\tau=0$.
\end{proof}

\begin{remark}\label{rem:kuehn}
  After we had arrived at Proposition~\ref{prop:wr-exist-expl}, we
  became aware of an essentially equivalent result by K\"uhnlein
  \cite[Lemma 2.5]{Kuehn}, where the invariant $\delta(\Gam) = \dim
  \langle 1, t, n \rangle_\QQ$ is introduced.  Clearly, condition (1)
  of Proposition~\ref{prop:wr-exist-expl} is equivalent with
  $\delta(\Gam) = 1$, and our conditions (2) and (3) are equivalent
  with $\delta(\Gam) = 2$ together with the condition that
  K\"uhnlein's `strange invariant' $\sigma(\Gam)$ is the class of all
  squares in $\QQ^\times$.  Here, $\sigma(\Gam)$ is the square class
  of $-\det (X)$, where $X=\left(\begin{smallmatrix} x & y \\ y &
      z \end{smallmatrix}\right)$ is a non-trivial integral matrix
  satisfying $\tr (X G) =0$, with $G=\bigl(\begin{smallmatrix}1 &
    t/2 \\ t/2 & n \end{smallmatrix}\bigr)$ being the Gram matrix of
  $\Gam$.  Altogether, this shows that our criterion is equivalent to
  K\"uhnlein's.
\end{remark}

In the situation of Proposition~\ref{prop:wr-exist}, let $R$ be the
unique (up to a sign) coincidence reflection and $\Gam_R = \langle w,
z \rangle$ the unique BRS sublattice.  We get all well-rounded
sublattices by considering the rectangular sublattices generated by
$kw$, $\ell z$ with the constraint
\begin{equation}
   k\frac{1}{\sqrt{3}}\frac{|w|}{|z|} \, \leq \, \ell 
   \, \leq \, k\sqrt{3}\ \frac{|w|}{|z|} \ts ,
\end{equation}
whose superlattice $\left\langle \frac{1}{2}kw\pm\frac{1}{2}\ell
  z\right\rangle_\ZZ$ is a sublattice of $\Gam$. The latter requires
that $k$ and $\ell$ have the same parity. By
Lemma~\ref{lem:even-index}, odd values $k,\ell$ occur if and only if
the index $\sigma = \sigma_\Gam := [\Gam : \Gam_R]$ is even.  This
gives the following result.

\begin{prop}\label{prop:wr-non-rational}
  Let\/ $\Gam$ be a lattice that has a well-rounded sublattice and
  assume that $\Gam$ is not rational (cf.\
  Proposition~$\ref{prop:wr-exist}$).  Let $\sigma$ be the index of
  its unique BRS sublattice $\Gam_R$ and $\kappa$ be the ratio of the
  lengths of its orthogonal basis vectors. The generating function for
  the number of well-rounded sublattices then reads as follows.
\begin{enumerate}
  \item If\/ $\sigma$ is odd, one has
    \[
       \Phi^{}_{\Gam, \mathsf{wr}}(s) \, = \, \frac{1}{\sigma^s}
        \, \phi^{}_{\mathsf{wr},\text{\rm even}}(\kappa;s),
    \]
    with
    \[
    \phi^{}_{\mathsf{wr},\text{\rm even}}(\kappa;s) \, = \,
    \frac{1}{2^s} \sum_{k\in\NN}\ \sum_{\frac{\kappa}{\sqrt{3}} k \leq
      \ell \leq \sqrt{3} \, \kappa \, k} \frac{1}{k^s \ell^s}\, .
          \]
  \item If\/ $\sigma$ is even, one has
    \[
      \Phi^{}_{\Gam, \mathsf{wr}}(s) \, = \, 
           \frac{1}{\sigma^s}\,
           \phi^{}_{\mathsf{wr},\text{\rm even}}(\kappa;s)
            + \frac{2^s}{\sigma^s}\,
           \phi^{}_{\mathsf{wr},\text{\rm odd}}(\kappa;s),
    \]
    with\/ $\phi^{}_{\mathsf{wr},\text{\rm even}}(\kappa;s)$ as above and
    \[
    \phi^{}_{\mathsf{wr},\text{\rm odd}}(\kappa;s) \, = 
    \sum_{k\in\NN}\ \sum_{\frac{\kappa}{\sqrt{3}} (k+\frac{1}{2})-
      \frac{1}{2} \leq \ell \leq \sqrt{3} \, \kappa \,
      (k+\frac{1}{2})-\frac{1}{2}} \frac{1}{(2k+1)^s (2\ell+1)^s}\, .
     \]
\end{enumerate}
\end{prop}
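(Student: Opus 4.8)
The plan is to directly translate the geometric description of well-rounded sublattices established in the discussion preceding Proposition~\ref{prop:wr-non-rational} into a counting problem, exactly mirroring the arguments used for $\ZZ[\ii]$ and $\ZZ[\rho]$ in Sections~\ref{sec:sq} and~\ref{sec:tr}. First I would recall that, by Proposition~\ref{prop:wr-exist}, the non-rational lattice $\Gam$ has a unique pair $\pm R$ of coincidence reflections and a unique BRS sublattice $\Gam_R = \langle w,z\rangle$ with $w\perp z$, $w,z$ primitive in $\Gam$. Every rectangular sublattice of $\Gam$ has an orthogonal basis of the form $kw,\ell z$ with $k,\ell\in\NN$ (this is the content of Lemma~\ref{lem:R-over-lattices} applied scale by scale, together with primitivity of $w,z$), and its index in $\Gam$ is $k\ell\ts\sigma$, where $\sigma = [\Gam:\Gam_R]$. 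Such a rectangular sublattice is \emph{well-rounded} precisely when it is rhombic, square or hexagonal, i.e.\ when $|kw\pm\ell z|^2 \ge \min(|kw|^2,|\ell z|^2)$; writing $\kappa = |w|/|z|$, a short computation turns this into $\frac{\kappa}{\sqrt{3}}\ts k \le \ell \le \sqrt{3}\ts\kappa\ts k$. Conversely, every well-rounded sublattice of $\Gam$ arises as the $\ZZ$-span of $\frac12(kw\pm\ell z)$ for such a pair $(k,\ell)$ — this is where $\widetilde{\Gam}_R$ enters — provided this half-integer combination actually lies in $\Gam$.

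The next step is to sort out the parity/divisibility constraint, which is governed by Lemma~\ref{lem:even-index} and Corollary~\ref{cor:even-index}. The lattice $\bigl\langle \frac12 k w \pm \frac12 \ell z\bigr\rangle_\ZZ$ contains $\langle kw,\ell z\rangle$ with index $2$ and has $kw,\ell z$ still primitive only when $k$ and $\ell$ have the same parity; and it is a sublattice of $\Gam$ exactly when either (i) $k,\ell$ are both even — always allowed — or (ii) $k,\ell$ are both odd, which by Lemma~\ref{lem:even-index} is allowed if and only if $\sigma$ is even. In case (i), writing the resulting well-rounded lattice in terms of the halved data, its index in $\Gam$ is $\tfrac12(2k)(2\ell)\sigma\cdot\frac12$... more carefully: the superlattice has index $\tfrac12 k\ell\sigma$ in $\Gam$ when $k,\ell$ are both even, so setting $k=2k',\ell=2\ell'$ the index is $2k'\ell'\sigma$ and $(k',\ell')$ ranges over all pairs with $\frac{\kappa}{\sqrt3}k'\le\ell'\le\sqrt3\kappa k'$; in case (ii) with $k,\ell$ odd, the index is $k\ell\sigma$. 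This is exactly the content of the two displayed formulas, with the factor $2^{-s}$ (resp.\ $2^s/\sigma^s$) tracking the extra $2$ (resp.\ the absence of it relative to the even normalisation), and the shifted summation bounds in $\phi^{}_{\mathsf{wr},\text{\rm odd}}$ coming from substituting $k\mapsto 2k+1$, $\ell\mapsto 2\ell+1$ into $\frac{\kappa}{\sqrt3}(2k+1)\le 2\ell+1\le\sqrt3\kappa(2k+1)$.

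Finally I would check that no well-rounded sublattice is counted twice. Because $\Gam_R$ is the \emph{unique} BRS sublattice (Proposition~\ref{prop:wr-exist}), there is no ``orientation'' ambiguity of the kind that forced the factor $3$ or the interchange of $p,q$ in the square and hexagonal cases: the pair of orthogonal directions $\QQ w,\QQ z$ is rigid, so the map $(k,\ell)\mapsto$ (well-rounded sublattice) is injective once we fix which generator plays which role — and here $\kappa$ is (generically) not $1$, so even the roles of $w$ and $z$ are distinguished, hence we let $\ell$ run over the full interval $[\frac{\kappa}{\sqrt3}k,\sqrt3\kappa k]$ rather than half of it. Assembling the even and (if $\sigma$ is even) odd contributions gives $\Phi^{}_{\Gam,\mathsf{wr}}(s)$ as stated.

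The main obstacle I anticipate is not any single hard estimate but rather being scrupulous about the bookkeeping in the parity case: one must verify that when $\sigma$ is odd there genuinely are \emph{no} well-rounded sublattices of odd index (so the formula has only the even term), that the index of the halved superlattice is correctly $2k'\ell'\sigma$ versus $k\ell\sigma$, and that the closed (non-strict) inequalities in the summation ranges are correct — unlike the $\ZZ[\ii]$ case, equality $\ell = \sqrt3\kappa k$ or $\ell = \frac{\kappa}{\sqrt3}k$ can now occur for suitable irrational $\kappa$ and still yields a genuinely well-rounded (indeed hexagonal-type) sublattice, so those endpoints must be \emph{included}. Getting these boundary conventions exactly right is the delicate point; everything else is a direct transcription of the geometric setup already in place.
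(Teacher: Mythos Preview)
Your approach is correct and essentially identical to the paper's: the proposition is stated there without a separate proof because it follows directly from the paragraph preceding it, which sets up exactly the parametrisation by $(k,\ell)$, the well-roundedness constraint $\frac{\kappa}{\sqrt{3}}\ts k \le \ell \le \sqrt{3}\ts\kappa\ts k$, and the parity analysis via Lemma~\ref{lem:even-index}, just as you describe.

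Two small corrections to your write-up. First, in case~(ii) with $k,\ell$ both odd, the index of the superlattice $\bigl\langle \tfrac{1}{2}(kw\pm\ell z)\bigr\rangle_\ZZ$ in $\Gam$ is $\tfrac{1}{2}k\ell\sigma$, not $k\ell\sigma$ --- this is what produces the factor $2^s/\sigma^s$ in the stated formula, which you do identify correctly a line later. Second, your concern about the endpoints is misplaced: equality $\ell = \sqrt{3}\ts\kappa\ts k$ or $\ell = \frac{\kappa}{\sqrt{3}}\ts k$ with $k,\ell\in\NN$ would force $\kappa^2\in\QQ$, making the Gram matrix of $\Gam_R$ (and hence of $\Gam$, which is commensurate with it) a rational multiple of a rational matrix, contrary to the hypothesis that $\Gam$ is non-rational. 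So for non-rational $\Gam$ the boundary is never attained, no hexagonal or square sublattices occur, and the inequalities may be taken strict or non-strict indifferently.
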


\begin{remark}\label{rem:kappa}
  The quantity $\kappa = |w|/|z|$ is unique up to taking its
  inverse. Note that $\phi^{}_{\mathsf{wr},\text{\rm even}}(\kappa;s)=
  \phi^{}_{\mathsf{wr},\text{\rm even}}(\tfrac{1}{\kappa};s)$ and
  $\phi^{}_{\mathsf{wr},\text{\rm odd}}(\kappa;s)=
  \phi^{}_{\mathsf{wr},\text{\rm odd}}(\tfrac{1}{\kappa};s)$. Hence,
  there is no ambiguity in the definition of the generating functions.
\end{remark}

In the cases of the square and hexagonal lattices we have been able to
give lower and upper bounds for the generating functions
$\Phi_{\mathsf{wr}}$.  In a similar way we obtain the following
result.
\begin{remark}\label{rem:phi-irr}
  We have the following inequalities for real $s>1$:
\begin{align}
   D^{}_{\text{\rm even}}(\kappa;s)-E^{}_{\text{\rm even}}(\kappa;s) 
  & \, < \, \phi^{}_{\mathsf{wr},\text{\rm even}}(\kappa;s) 
  \, < \, D^{}_{\text{\rm even}}
   (\kappa;s)+E^{}_{\text{\rm even}}(\kappa;s) \ts , \nonumber \\
  D^{}_{\text{\rm odd}}(\kappa;s)-E^{}_{\text{\rm odd}}(\kappa;s) 
  &\, < \;\ts \phi^{}_{\mathsf{wr},\text{\rm odd}}(\kappa;s)
  \; < \, D^{}_{\text{\rm odd}}(\kappa;s)+
   E^{}_{\text{\rm odd}}(\kappa;s) \ts , \nonumber
\end{align}
with the generating functions
\begin{align}
D^{}_{\text{\rm even}}(\kappa;s) & \, = \,
\frac{1}{2^s}\left(\frac{\sqrt{3}}{\kappa}\right)^{\! s-1} 
\frac{1-3^{1-s}}{s-1}\zeta(2s-1)\ts ,
\nonumber \\
E^{}_{\text{\rm even}}(\kappa;s) & \, = \,
\frac{1}{2^s}\left(\frac{\sqrt{3}}{\kappa}\right)^{\! s} \zeta(2s)\ts ,
\nonumber \\
D^{}_{\text{\rm odd}}(\kappa;s) & \, = \,
\frac{1}{2}\left(\frac{\sqrt{3}}{\kappa}\right)^{\! s-1} 
\frac{1-3^{1-s}}{s-1}\left(1-\frac{1}{2^{2s-1}}\right)\zeta(2s-1)\ts ,
\nonumber \\
E^{}_{\text{\rm odd}}(\kappa;s) & \, = \,
\left(\frac{\sqrt{3}}{\kappa}\right)^{\! s}
\left(1-\frac{1}{2^{2s}}\right) \zeta(2s) \ts .
\nonumber
\end{align}
\end{remark}

Let us now have a closer look at the analytic properties of
$\Phi^{}_{\Gam, \mathsf{wr}}$.  Before formulating the theorem, we
observe that the two cases of Proposition~\ref{prop:wr-non-rational}
can be unified by considering the index $\varSigma := [\Gam :
\Gam(R)]$ of the unique non-trivial CSL in $\Gam$. By
Corollary~\ref{cor:even-index}, $\sigma=\varSigma$ if $\sigma$ is odd
and $\sigma=2\varSigma$ if $\sigma$ is even.  We can now formulate a
refinement of Lemma~3.3 and Corollary~3.4 in \cite{Kuehn} as follows.

\begin{prop}\label{prop:non-rat-cont}
  Let $\Gam$ be a lattice with a well-rounded sublattice and
  assume that $\Gam$ is not rational, so that $\Gam$ has exactly one
  non-trivial CSL. Let $\varSigma$ be its index in $\Gam$.  Then, the
  generating function\/ $\Phi^{}_{\Gam, \mathsf{wr}}$ for the number of
  well-rounded sublattices has an analytic continuation to the open
  half plane $\{\RE(s) > \frac{1}{2} \}$ except for a simple pole
  at $s=1$, with residue $\frac{\log(3)}{4\varSigma}$.
\end{prop}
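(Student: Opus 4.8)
The plan is to reduce everything, via Proposition~\ref{prop:wr-non-rational}, to the two ``universal'' functions $\phi^{}_{\mathsf{wr},\text{\rm even}}(\kappa;s)$ and $\phi^{}_{\mathsf{wr},\text{\rm odd}}(\kappa;s)$, to split off from each of them a meromorphic main part by Euler's summation formula, and finally to extract the residue at $s=1$. By Proposition~\ref{prop:wr-non-rational}, we have $\Phi^{}_{\Gam,\mathsf{wr}}(s)=\sigma^{-s}\,\phi^{}_{\mathsf{wr},\text{\rm even}}(\kappa;s)$ when $\sigma=\sigma^{}_\Gam=[\Gam:\Gam_R]$ is odd, and $\Phi^{}_{\Gam,\mathsf{wr}}(s)=\sigma^{-s}\,\phi^{}_{\mathsf{wr},\text{\rm even}}(\kappa;s)+2^{s}\sigma^{-s}\,\phi^{}_{\mathsf{wr},\text{\rm odd}}(\kappa;s)$ when $\sigma$ is even. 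By Corollary~\ref{cor:even-index}, the CSL index satisfies $\varSigma=\sigma$ in the first case and $\varSigma=\sigma/2$ in the second, so that a uniform treatment becomes possible once the analytic structure of the $\phi$'s is understood.

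First I would treat $\phi^{}_{\mathsf{wr},\text{\rm even}}(\kappa;s)$, proceeding in the spirit of the proof of Theorem~\ref{thm:sq1}. Applying Euler's summation formula~\eqref{eq:euler-sum} to the inner sum over $\ell$, the leading integral $\int_{\kappa k/\sqrt3}^{\sqrt3\,\kappa k}\ell^{-s}\dd\ell$ evaluates to $\bigl(\sqrt3/\kappa\bigr)^{s-1}\frac{1-3^{1-s}}{s-1}\,k^{1-s}$, so that weighting by $2^{-s}k^{-s}$ and summing over $k$ produces precisely $D^{}_{\text{\rm even}}(\kappa;s)$ from Remark~\ref{rem:phi-irr}. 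The remaining pieces of Euler's formula, namely the boundary evaluations at the (generally non-integer) endpoints $\kappa k/\sqrt3$ and $\sqrt3\,\kappa k$ together with the $\{t\}$-integral, are each $\OO\bigl(k^{-\RE(s)}\bigr)$ with a constant locally bounded in $s$; hence, after weighting by $k^{-s}$ and summing over $k$, they give a Dirichlet-type series converging absolutely and locally uniformly on $\{\RE(s)>\frac12\}$, and therefore an analytic function there. The same argument applied to $\phi^{}_{\mathsf{wr},\text{\rm odd}}(\kappa;s)$ isolates $D^{}_{\text{\rm odd}}(\kappa;s)$. Since $D^{}_{\text{\rm even}}$ and $D^{}_{\text{\rm odd}}$ are meromorphic on all of $\CC$ and their only pole in $\{\RE(s)>\frac12\}$ is the simple pole at $s=1$ coming from the factor $\zeta(2s-1)$, the claimed analytic continuation of $\Phi^{}_{\Gam,\mathsf{wr}}$, with a simple pole at $s=1$, follows.

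It then remains to compute the residue at $s=1$. From $\zeta(2s-1)=\frac{1}{2(s-1)}+\gamma+\OO(s-1)$ and $\lim_{s\to1}\frac{1-3^{1-s}}{s-1}=\log(3)$, one reads off that the residue of $D^{}_{\text{\rm even}}(\kappa;s)$ at $s=1$ equals $\frac{\log(3)}{4}$, while that of $D^{}_{\text{\rm odd}}(\kappa;s)$ equals $\frac{\log(3)}{8}$ (the extra factor $\frac12$ stemming from $1-2^{1-2s}\to\frac12$); both are independent of $\kappa$. If $\sigma$ is odd, this gives $\frac{1}{\sigma}\cdot\frac{\log(3)}{4}=\frac{\log(3)}{4\varSigma}$. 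If $\sigma$ is even, the two contributions combine to $\frac{1}{\sigma}\cdot\frac{\log(3)}{4}+\frac{2}{\sigma}\cdot\frac{\log(3)}{8}=\frac{\log(3)}{2\sigma}=\frac{\log(3)}{4\varSigma}$, using $\sigma=2\varSigma$. This establishes the proposition.

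I expect the only genuinely delicate point to be the uniform bookkeeping in the Euler-summation step, that is, checking that the endpoint contributions at the irrational bounds and the remainder integral are indeed $\OO\bigl(k^{-\RE(s)}\bigr)$ with an $s$-dependence tame enough for the resulting series to be analytic on the whole half plane $\{\RE(s)>\frac12\}$; everything else, the reduction via Proposition~\ref{prop:wr-non-rational} and Corollary~\ref{cor:even-index} and the short Laurent expansion at $s=1$, is routine.
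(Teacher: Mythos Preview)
Your proposal is correct and follows essentially the same approach as the paper: reduce via Proposition~\ref{prop:wr-non-rational}, apply Euler's summation to the inner sums of $\phi^{}_{\mathsf{wr},\text{even}}$ and $\phi^{}_{\mathsf{wr},\text{odd}}$ to split off the meromorphic parts $D^{}_{\text{even}}$ and $D^{}_{\text{odd}}$, verify the remainders are analytic on $\{\RE(s)>\tfrac12\}$, and combine the residues $\tfrac{\log(3)}{4}$ and $\tfrac{\log(3)}{8}$ using the relation between $\sigma$ and $\varSigma$ from Corollary~\ref{cor:even-index}. Your write-up is in fact more explicit than the paper's own proof about the residue computation and the handling of the Euler-summation remainder terms.
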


\begin{proof}
  We proceed in a similar way as in the proof of Theorem~\ref{thm:sq1}
  by applying Euler's summation formula to the inner sum. This shows
  that both $\phi^{}_{\mathsf{wr},\text{\rm
      even}}(\kappa;s)-D^{}_{\text{\rm even}}(\kappa;s)$ and
  $\phi^{}_{\mathsf{wr},\text{\rm odd}}(\kappa;s)-D^{}_{\text{\rm
      odd}}(\kappa;s)$ are analytic in the open half plane $\{\RE(s) >
  \frac{1}{2} \}$. Moreover, the explicit formulas from above show
  that both $D^{}_{\text{\rm even}}(\kappa;s)$ and $D^{}_{\text{\rm
      odd}}(\kappa;s)$ are analytic in the whole complex plane except
  at $s=1$, where they have a simple pole with residue
  $\frac{\log(3)}{4}$ and $\frac{\log(3)}{8}$, respectively.
  Inserting this result into the expressions for $\Phi^{}_{\Gam,
    \mathsf{wr}}(s)$, we compute the residue at $s=1$ to
  $\frac{\log(3)}{4\varSigma}$, where we have used that
  $\sigma=\varSigma$ if $\sigma$ is odd and $\sigma=2\varSigma$ if
  $\sigma$ is even.
\end{proof}

Using similar arguments as in the proofs of Theorems~\ref{thm:sq1}
and~\ref{thm:sq2}, one can derive from
Proposition~\ref{prop:non-rat-cont} the asymptotic behaviour of the
number of well-rounded sublattices as follows.
\begin{theorem}\label{thm:non-rat-growth}
  Under the assumptions of Proposition $\ref{prop:non-rat-cont}$, the
  summatory function $A^{}_{\Gam} (x) \, = \, \sum_{n\leq x}
  a^{}_{\Gam} (n)$ possesses the asymptotic growth behaviour
\[
  A^{}_{\Gam} (x) \, = \,
  \frac{\log(3)}{4\varSigma}x + \OO\bigl(\sqrt{x}\ts \bigr)
\]
  as $x\to\infty$.    \qed
\end{theorem}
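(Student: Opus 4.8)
The plan is to deduce the asymptotic from the analytic continuation in Proposition~\ref{prop:non-rat-cont} by the standard Tauberian machinery already invoked in the proofs of Theorems~\ref{thm:sq1} and~\ref{thm:sq2}. Since $\Phi^{}_{\Gam,\mathsf{wr}}(s)$ has a \emph{simple} pole at $s=1$ with residue $\frac{\log(3)}{4\varSigma}$ and is otherwise analytic in $\{\RE(s)>\tfrac12\}$, Delange's theorem (Theorem~\ref{thm:meanvalues}) immediately gives the main term $\frac{\log(3)}{4\varSigma}\,x$ for $A^{}_{\Gam}(x)$, together with a qualitative $\oo(x)$ error. To sharpen the error term to $\OO(\sqrt{x}\,)$, however, Delange's theorem is too blunt; instead I would imitate the elementary/hyperbola-method route used for the square lattice, working directly with the explicit Dirichlet series from Proposition~\ref{prop:wr-non-rational}.

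Concretely, first I would reduce to estimating the summatory functions of $\phi^{}_{\mathsf{wr},\text{\rm even}}(\kappa;s)$ and $\phi^{}_{\mathsf{wr},\text{\rm odd}}(\kappa;s)$, since $\Phi^{}_{\Gam,\mathsf{wr}}$ is a finite $\ZZ$-linear combination of $\sigma^{-s}$- and $(2^s/\sigma^s)$-shifts of these (the shift by $1/\sigma$ in the variable $n$ just dilates the summation range, costing nothing in the order of the error). For the even part, the coefficient of $n^{-s}$ counts pairs $(k,\ell)$ with $\tfrac{\kappa}{\sqrt3}k\le\ell\le\sqrt3\,\kappa\,k$ and $2k\ell=n$; summing over $n\le x$ is exactly counting lattice points $(k,\ell)$ with $k\ell\le x/2$ in the angular sector $\tfrac{\kappa}{\sqrt3}\le \ell/k\le\sqrt3\,\kappa$. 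This is a region bounded by two rays through the origin and a hyperbola $k\ell= x/2$; its area is $\frac{\log 3}{4}\,x$ (the angular width contributes the factor $\log(\sqrt3\,\kappa)-\log(\kappa/\sqrt3)=\log 3$, and $\int$ of the hyperbola the factor $x/2$, with the $\tfrac12$ from $2k\ell=n$). A standard lattice-point count — e.g.\ slicing by $k\le\sqrt{x}$, bounding each inner sum over $\ell$ by its length plus $\OO(1)$ via $\sum_{k<\ell\le M}1 = M-k+\OO(1)$, and using the symmetry of the region under $k\leftrightarrow\ell$ when $\kappa=1$, or a direct two-sided sweep when $\kappa\ne1$ — yields the area as main term with error $\OO(\sqrt{x}\,)$. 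The odd part is handled identically after substituting $k\mapsto 2k+1$, $\ell\mapsto 2\ell+1$, which only rescales the sector and shifts the hyperbola, again with $\OO(\sqrt x)$ error; combining the two contributions with Corollary~\ref{cor:even-index} (so that $\sigma=\varSigma$ or $\sigma=2\varSigma$) collapses the main terms to the single coefficient $\frac{\log(3)}{4\varSigma}$, matching the residue computed in Proposition~\ref{prop:non-rat-cont}.

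The one genuinely delicate point is the contribution of the boundary of the angular sector. Because $\kappa$ can be irrational, the rays $\ell = \tfrac{\kappa}{\sqrt3}k$ and $\ell=\sqrt3\,\kappa\,k$ generically pass through no lattice points, so there is no subtlety about whether to count points on the boundary; but one must still check that replacing the integer-valued inner sum by its real length introduces only a bounded error \emph{per slice}, uniformly in $k$, so that summing over $k\le\sqrt x$ gives $\OO(\sqrt x)$ rather than something larger. This is where Euler's summation formula~\eqref{eq:euler-sum} enters, exactly as in the proof of Theorem~\ref{thm:sq1}: applied to the inner sum $\sum_\ell \ell^{-s}$ (or $(2\ell+1)^{-s}$) over an interval whose endpoints depend on $k$, it produces the length as the main term plus a correction that is $\OO(1)$ times endpoint values, and summing the latter over the outer variable is precisely the source of the $\sqrt x$. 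I do not expect any obstacle beyond bookkeeping here, since the structure is word-for-word that of the square and hexagonal cases with the concrete constants $\sqrt3$, $L(s,\chi)$ etc.\ replaced by the abstract $\kappa$, $\sigma$; one simply has to verify that none of the cancellations used in Theorem~\ref{thm:sq2} were specific to those arithmetic functions.
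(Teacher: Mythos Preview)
Your proposal is correct and follows essentially the same route the paper indicates: the paper gives no detailed proof here, only the remark that ``using similar arguments as in the proofs of Theorems~\ref{thm:sq1} and~\ref{thm:sq2}'' one derives the result from Proposition~\ref{prop:non-rat-cont}, and your outline---Delange for the leading term, then a direct lattice-point count in the hyperbolic sector (via Euler summation on the inner sum) for the $\OO(\sqrt{x}\,)$ error---is exactly this, only simpler because there is no convolution with $\Phi^{\mathsf{pr}}_{\square}$ or $\Phi^{\mathsf{pr}}_{\triangle}$. One small slip: in the final paragraph you speak of applying Euler summation to $\sum_\ell \ell^{-s}$, but for the summatory function you are counting points, so the inner sum is $\sum_\ell 1$ over an interval of length $O(k)$, and the error per slice is trivially $O(1)$---the $\ell^{-s}$ version is what gives the analytic continuation in Proposition~\ref{prop:non-rat-cont}, not the $\OO(\sqrt{x}\,)$ bound.
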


\subsection{The rational case}\label{sub:rational}

A rational lattice $\Gam$ contains infinitely many BRS sublattices
$\Gam_R$.  Using the same considerations as in the previous
subsection, for any given pair $\pm R$ we can count the number of
well-rounded sublattices invariant under $\pm R$ (that is, contained
in $\widetilde \Gam_R$). Counting all possible well-rounded
sublattices then amounts to sum over all possible pairs $\pm
R$. However, some care is needed in case of square and hexagonal
lattices.

For convenience, we use the notation $\CR_1 := \{\pm R \mid \widetilde
\Gam_R \not \subseteq \Gam \}$ and $\CR_2 := \{\pm R \mid \widetilde
\Gam_R \subseteq \Gam \}$, which, by Lemma~\ref{lem:even-index}, is a
partition of $\CR$ into sets of odd and even index of $\Gam_R$, which
is reflected by the indices $1$ and $2$.

\begin{prop}\label{prop:rational-gen}
  Let\/ $\Gam$ be a rational lattice and let\/
  $\Phi^{\triangle}_{\Gam}(s)$ be the generating function of all
  hexagonal sublattices of\/ $\Gam$. Now, for any pair of coincidence
  reflections\/ $\pm R \in \CR_\Gam$, let\/ $\sigma(R) = \mbox{$[\Gam :
    \Gam_R]$}$ and let\/ $\kappa(R)$ be the length ratio of orthogonal
  basis vectors of\/ $\Gam_R$.  Then, the generating function for the
  number of well-rounded sublattices reads 
\begin{align}\label{eq:rational-gen}
  \Phi^{}_{\Gam, \mathsf{wr}}(s) &\, = \sum_{\pm R \in \CR_1}
  \frac{1}{\sigma(R)^s}\,
  \phi^{}_{\mathsf{wr},\text{\rm even}}(\kappa(R);s)\\
  & \quad + \sum_{\pm R \in \CR_2}
  \frac{1}{\sigma(R)^s}\left(\phi^{}_{\mathsf{wr},\text{\rm
        even}}(\kappa(R);s)
    + 2^s \ts \phi^{}_{\mathsf{wr},\text{\rm odd}}(\kappa(R);s)\right)
   \nonumber\\[1mm]
  & \quad -2\ts \Phi^{\triangle}_{\Gam}(s),\nonumber
\end{align}
where\/ $\phi^{}_{\mathsf{wr},\text{\rm even}}(\kappa;s)$ and\/
$\phi^{}_{\mathsf{wr},\text{\rm odd}}(\kappa;s)$ are as in
Proposition~$\ref{prop:wr-non-rational}$.
\end{prop}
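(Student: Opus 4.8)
The plan is to parametrise the well-rounded sublattices of $\Gam$ by pairs $(\Lam,\pm R)$, where $\pm R\in\CR_\Gam$ is a coincidence reflection of $\Gam$ under which $\Lam$ is invariant and which acts on $\Lam$ in \emph{rhombic mode} (in the algebraic sense of the dichotomy recalled just before Lemma~\ref{lem:R-over-lattices}), and then to correct for the resulting multiplicities by comparing with the geometric type of $\Lam$.

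First I would fix $\pm R\in\CR_\Gam$ with $\Gam_R=\langle w,z\rangle$, $\sigma(R)=[\Gam:\Gam_R]$ and $\kappa(R)=|w|/|z|$, and run the argument preceding Proposition~\ref{prop:wr-non-rational} verbatim: a well-rounded sublattice $\Lam$ on which $R$ acts in rhombic mode must be of the form $\Lam=\bigl\langle\frac12(kw+\ell z),\frac12(kw-\ell z)\bigr\rangle_\ZZ$ with $k\equiv\ell\pmod 2$ and $\frac{\kappa(R)}{\sqrt3}\,k\le\ell\le\sqrt3\,\kappa(R)\,k$, and conversely every such lattice is well-rounded, $R$-invariant and a sublattice of $\Gam$, the last property forcing $\sigma(R)$ to be even whenever $k,\ell$ are odd, by Lemma~\ref{lem:even-index} together with Corollary~\ref{cor:even-index}. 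Since only the structure of $\Gam$ near $\Gam_R$ and $\widetilde\Gam_R$ enters here, the index bookkeeping is identical to the one in Proposition~\ref{prop:wr-non-rational}; hence the generating function of the well-rounded sublattices on which $R$ acts in rhombic mode equals $\sigma(R)^{-s}\,\phi^{}_{\mathsf{wr},\text{\rm even}}(\kappa(R);s)$ if $\pm R\in\CR_1$, and $\sigma(R)^{-s}\bigl(\phi^{}_{\mathsf{wr},\text{\rm even}}(\kappa(R);s)+2^{s}\,\phi^{}_{\mathsf{wr},\text{\rm odd}}(\kappa(R);s)\bigr)$ if $\pm R\in\CR_2$.

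Next I would sum these contributions over all $\pm R\in\CR_1\cup\CR_2=\CR_\Gam$; this produces the first two lines of~\eqref{eq:rational-gen}. Coefficientwise, the $n^{-s}$-term of the resulting series is $\sum_\Lam m(\Lam)$, summed over the well-rounded sublattices $\Lam$ of index $n$, where $m(\Lam)$ is the number of reflection pairs $\pm R$ of $\Lam$ that act on $\Lam$ in rhombic mode. Indeed, each such $\pm R$ lies in $\CR_\Gam$, since a reflection stabilising a finite-index sublattice of $\Gam$ is automatically a coincidence reflection of $\Gam$; and conversely, if $\Lam$ is counted under $\pm R$, then $R$ interchanges the two generators $\frac12(kw\pm\ell z)$ of $\Lam$, so $R\in\mathrm O(\Lam)$. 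It then remains to evaluate $m(\Lam)$ by geometric type, using the description of $\mathrm O(\Lam)$ recalled in Section~\ref{sec:tools}: for rhombic or centred-rectangular $\Lam$, $\mathrm O(\Lam)$ contributes one reflection pair, which is of rhombic type (as $\Lam$ has no orthogonal basis), so $m(\Lam)=1$; for square $\Lam$, there are two reflection pairs, the one along the edge directions splitting $\Lam$ as an orthogonal sum and hence being of rectangular type, the one along the diagonal directions being of rhombic type, so again $m(\Lam)=1$; for hexagonal $\Lam$, there are three reflection pairs, none of which can be of rectangular type since a hexagonal lattice has no orthogonal basis, so all three are of rhombic type and $m(\Lam)=3$. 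Therefore $\sum_\Lam m(\Lam)=a^{}_\Gam(n)+2h(n)$ over index-$n$ sublattices, where $h(n)$ counts the hexagonal sublattices of index $n$, and subtracting $2\,\Phi^{\triangle}_{\Gam}(s)$ corrects the overcount and yields~\eqref{eq:rational-gen}. (For each $n$ there are only finitely many BRS sublattices, so the sum over $\CR_\Gam$ is a bona fide Dirichlet series, absolutely convergent for $\RE(s)$ sufficiently large, where the coefficient identity becomes an identity of analytic functions.)

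The main obstacle is the multiplicity count in the previous step: one must verify that a square sublattice is recovered under exactly \emph{one} of its two reflection pairs — the diagonal one — because the edge reflection pair exhibits it as an orthogonal (rectangular) sum, a shape not produced by the parametrisation, whereas a hexagonal sublattice genuinely arises under all three of its reflection pairs. Determining these numbers $1,1,3$ — equivalently, deciding which reflection pairs act in rhombic and which in rectangular mode on $\Lam$ — is precisely the care required in the case of square and hexagonal lattices.
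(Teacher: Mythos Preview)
Your proposal is correct and follows essentially the same strategy as the paper's proof: for each reflection pair $\pm R$, enumerate the well-rounded sublattices arising from the parametrisation over $\Gam_R$ (equivalently, those contained in $\widetilde\Gam_R$), sum over $\CR_\Gam$, and then determine the multiplicity with which each well-rounded sublattice is counted according to its geometric type. Your explicit use of the rectangular/rhombic dichotomy for the action of $R$ on $\Lam$ is exactly the mechanism behind the paper's observation that, for a square sublattice $\Lam$, the reflection pair whose eigenvectors form a reduced basis of $\Lam$ does \emph{not} contribute; you have simply made this point more transparent by naming it, but the underlying argument and the resulting multiplicity count ($1$ for rhombic, $1$ for square, $3$ for hexagonal) are identical.
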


Keep in mind that we sum over pairs of coincidence reflections $\pm R$
here.  According to Lemma~\ref{lem:BRS}, we could alternatively sum
over BRS sublattices or rational orthogonal frames. Furthermore, note
that $\Phi^{\triangle}_{\Gam}(s) = 0$ unless $\Gam$ is commensurate to
a hexagonal lattice.

Before proving Proposition~\ref{prop:rational-gen}, let us have a
closer look at some special cases.
\begin{remark}\label{rem:rational-special}
  If $\Gam$ is not commensurate to a square or a hexagonal lattice,
  all well-rounded sublattices are rhombic. Likewise, all CSLs
  $\Gam(R)$ generated by a reflection are either rectangular or
  rhombic-cr. In fact, there exists a bijection between BRS
  sublattices $\Gam_R$ and the corresponding CSLs $\Gam(R)$, which
  implies that the summation in Eq.~\eqref{eq:rational-gen} could be
  carried out over CSLs as well. In particular, $\CR_1 =
  \CR_{\text{rec}} := \{\pm R \mid \Gam(R) \text{ rectangular}\}$ and
  $\CR_2 = \CR_{\text{rh-cr}} := \{\pm R \mid \Gam(R) \text{
    rhombic-cr}\}$ by Lemma~\ref{lem:even-index}.

  The case that $\Gam$ is commensurate to a hexagonal lattice is the
  only one where the additional term $-2\ts \Phi^{\triangle}_{\Gam}(s)$ is
  non-trivial, which compensates for the fact that the sum over $\pm
  R\in \CR_2$ counts every hexagonal sublattice thrice. Here, we do
  not have the bijection between the BRS sublattices $\Gam_R$ and CSLs
  $\Gam(R)$ any more, and the sums cannot be replaced by sums over
  CSLs. Still, we have a characterisation of the sets $\CR_1$ and
  $\CR_2$ via CSLs, namely $\CR_1 = \CR_{\text{rec}} := \{\pm R \mid
  \Gam(R) \text{ rectangular}\}$ and $\CR_2 = \CR_{\text{rh-cr-hex}}
  := \{\pm R \mid \Gam(R) \text{ rhombic-cr or hexagonal}\}$.

  If $\Gam$ is commensurate to a square lattice, no simple
  characterisation of $\CR_1$ and $\CR_2$ via CSLs is possible. This
  is due to the fact that square CSLs may appear both in $\CR_1$ and
  in $\CR_2$.
\end{remark}

\begin{proof}[Proof of Proposition~$\ref{prop:rational-gen}$]
  As indicated above, counting all well-rounded sublattices that are
  invariant under a given pair $\pm R$ (that is, contained in
  $\widetilde \Gam_R$) gives a contribution
\[
   \frac{1}{\sigma(R)^s}\,
   \phi^{}_{\mathsf{wr},\text{\rm even}}\bigl(\kappa(R);s \bigr)
\]
if $\widetilde \Gam_R \not\in \Gam$, and
\[
   \frac{1}{\sigma(R)^s}\ts
    \Bigl(\phi^{}_{\mathsf{wr},\text{\rm even}}\bigl(\kappa(R);s \bigr)
    + 2^s\ts \phi^{}_{\mathsf{wr},\text{\rm odd}}\bigl(\kappa(R);s \bigr)\Bigr)
\]
if $\widetilde \Gam_R \in \Gam$. If $\Gam$ is not commensurate to a
hexagonal or a square lattice, every well-rounded sublattice is of
rhombic type and belongs to a unique pair $\pm R$ of coincidence
reflections. Thus, summing over all pairs $\pm R$ immediately gives
the result in this case.

The situation is more complex for lattices that are commensurate to a
hexagonal or a square lattice, since some well-rounded sublattices may
be of hexagonal or square type, respectively, and hence there may be
more than one pair $\pm R$ of coincidence reflections associated with
it.  The rhombic well-rounded sublattices may still be treated in the
same way as above, but the hexagonal and square sublattices need extra
care.

A hexagonal sublattice corresponds to exactly three pairs of
coincidence reflections. Thus we count the hexagonal lattices thrice
if we sum over all pairs of coincidence reflections, which we
compensate by subtracting the term $2\ts \Phi^{\triangle}_{\Gam}(s)$.

Similarly, a square sublattice $\Lam$ is invariant under two pairs
$\pm R, \pm S$ of coincidence reflections. However, these two pairs
play different roles, as exactly one of these pairs, say $\pm S$, has
eigenvectors which form a reduced basis of $\Lam$.  This implies that
$\Lam$ is only counted in the set of rhombic and square lattices which
emerge from $\Gam_R$.  Hence, we have a unique pair $\pm R$ in this
case as well, and no correction term is needed here.
\end{proof}

\begin{theorem}
  For any rational lattice $\Gam$, the generating function\/ $
  \Phi^{}_{\Gam, \mathsf{wr}}(s)$ has an analytic continuation to the
  half plane $\{ \RE(s) >\frac{1}{2} \}$ except for a pole of order\/
  $2$ at\/ $s=1$.  Hence there exists a constant $c>0$ such that the
  asymptotic growth rate, as $x\to\infty$, is
\[
     A^{}_{\Gam} (x) \, = \sum_{n\leq x} a^{}_{\Gam}(n) 
     \, \sim \, c \ts x \log(x) \ts .
\]
\end{theorem}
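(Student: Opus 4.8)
The plan is to start from the explicit formula~\eqref{eq:rational-gen} for $\Phi^{}_{\Gam,\mathsf{wr}}(s)$ in Proposition~\ref{prop:rational-gen} and analyse each of its three contributions separately. The finitely summed correction term $-2\,\Phi^{\triangle}_{\Gam}(s)$ is (up to a constant and an index shift) a Dedekind zeta function of $\QQ(\rho)$, hence analytic for $\RE(s)>\tfrac12$ apart from a simple pole at $s=1$, so it cannot affect the claimed order-$2$ pole and only shifts the constant $c$. The heart of the matter is therefore the two infinite sums over $\pm R\in\CR_1$ and $\pm R\in\CR_2$. For a \emph{single} pair $\pm R$, Proposition~\ref{prop:non-rat-cont} (via Remark~\ref{rem:phi-irr}) already tells us that $\sigma(R)^{-s}\phi^{}_{\mathsf{wr},\text{\rm even}}(\kappa(R);s)$ and the analogous odd piece each have a meromorphic continuation to $\{\RE(s)>\tfrac12\}$ whose only singularity is a \emph{simple} pole at $s=1$. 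Summing infinitely many such terms produces an \emph{order-$2$} pole, because the residues themselves grow like $\sigma(R)^{-1}$ and the associated Dirichlet series $\sum_{\pm R}\sigma(R)^{-s}$ has a simple pole at $s=1$; this telescoping of a simple pole against a simple-pole-carrying outer sum is exactly the mechanism seen in Theorems~\ref{thm:sq1} and~\ref{thm:tri1}.

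To make this rigorous I would split, for each $\pm R$, the local factor $\phi^{}_{\mathsf{wr},\text{\rm even}}(\kappa(R);s)$ into its principal part $D^{}_{\text{\rm even}}(\kappa(R);s)$ from Remark~\ref{rem:phi-irr} plus an analytic remainder, and likewise for the odd piece. The remainders, when divided by $\sigma(R)^s$ and summed over $\CR$, should be shown to converge locally uniformly on $\{\RE(s)>\tfrac12\}$ to an analytic function; the needed bound is that the remainder in Euler's summation formula applied to the inner sum is $\OO(\kappa(R)^{-s}\,k^{-s})$-type and that, more importantly, the outer sum $\sum_{\pm R}\sigma(R)^{-\RE(s)}$ converges for $\RE(s)>1$ and continues past $\RE(s)=\tfrac12$ after one subtracts its own pole. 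The principal parts contribute $\bigl(\sum_{\pm R}\sigma(R)^{-s}\bigr)$ times the meromorphic prefactor $\bigl(\tfrac{\sqrt3}{\kappa}\bigr)^{s-1}\tfrac{1-3^{1-s}}{s-1}\zeta(2s-1)$ — but note $\kappa(R)$ varies with $R$, so in fact one gets a weighted Dirichlet series $\sum_{\pm R}\kappa(R)^{1-s}\sigma(R)^{-s}$ (and a $\CR_1$ versus $\CR_2$ variant with an extra $2^s$ factor). Each such weighted series must be identified with a known arithmetic object: since $\Gam$ is rational, one may rescale so the Gram matrix is integral, and then $\pm R\leftrightarrow\Gam_R$ ranges over BRS sublattices, whose indices and length ratios are governed by the arithmetic of the binary quadratic form attached to $\Gam$. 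I would argue that $\sum_{\pm R}\kappa(R)^{1-s}\sigma(R)^{-s}$ is, up to elementary Euler factors, a product of $\zeta$-functions and/or an Epstein zeta function (Appendix~\ref{app-sec:epst}), hence has a meromorphic continuation to $\{\RE(s)>\tfrac12\}$ with a \emph{simple} pole at $s=1$.

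Combining: the product of a simple pole (from the outer index sum) with the simple pole of $\zeta(2s-1)$ inside $D^{}_{\text{\rm even}}$ yields exactly a pole of order $2$ at $s=1$; the prefactors $\tfrac{1-3^{1-s}}{s-1}$, the $2^s$-twists, and the subtracted $\Phi^{\triangle}_{\Gam}(s)$ are all analytic and non-vanishing near $s=1$, so no cancellation occurs and the pole is genuinely of order exactly $2$. The leading Laurent coefficient is a positive constant — positivity is immediate since all the summands of $\Phi^{}_{\Gam,\mathsf{wr}}(s)$ beyond $-2\Phi^{\triangle}_{\Gam}$ are Dirichlet series with non-negative coefficients that dominate the subtracted term, or more simply because $a^{}_{\Gam}(n)\ge 0$ forces the top coefficient to be $\ge 0$ and it cannot be $0$ as there are infinitely many well-rounded sublattices. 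Finally, an application of the Tauberian/mean-value machinery (Theorem~\ref{thm:meanvalues} in Appendix~\ref{app-sec:ana}, Delange's theorem), exactly as at the end of the proof of Theorem~\ref{thm:sq1}, converts the order-$2$ pole at $s=1$ with leading coefficient $c$ into $A^{}_{\Gam}(x)\sim c\,x\log(x)$.

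\textbf{Main obstacle.} The delicate point is the \emph{uniform} control of the infinite sum over $\pm R\in\CR$: one must show that subtracting the principal parts $D^{}_{\text{\rm even}}$, $D^{}_{\text{\rm odd}}$ term-by-term leaves a remainder series that converges locally uniformly on $\{\RE(s)>\tfrac12\}$, which requires a quantitative handle on how the inner-sum error in Euler's summation formula depends on $\kappa(R)$ as $\sigma(R)\to\infty$, together with knowing the abscissa of the weighted index series $\sum_{\pm R}\kappa(R)^{1-s}\sigma(R)^{-s}$ — i.e.\ identifying it explicitly enough (via the quadratic form of $\Gam$ and Epstein zeta functions) to read off both its region of analyticity and the order of its pole at $s=1$.
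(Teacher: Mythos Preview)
Your overall architecture is right --- start from Proposition~\ref{prop:rational-gen}, handle $\Phi^{\triangle}_{\Gam}$ separately, and analyse the infinite sum over $\pm R$ --- and you have correctly located the crux: controlling $\sum_{\pm R}\sigma(R)^{-s}\phi^{}_{\mathsf{wr},\bullet}(\kappa(R);s)$ as $\kappa(R)$ varies with $R$. But you are missing the single observation that dissolves your ``main obstacle'' entirely, and without it your proposed route (term-by-term subtraction of $D^{}_{\text{even}}$, $D^{}_{\text{odd}}$ and uniform convergence of the remainder series on $\{\RE(s)>\tfrac12\}$) is genuinely hard to complete.

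The missing idea is this: after rescaling so that $\Gam$ is integral and primitive, Proposition~\ref{prop:index-of-BRS} gives $\sigma(w)=(w,w)/g^*(w)$ and $\kappa(w)=g^*(w)/\sqrt{d}$, and crucially $g^*(w)$ is always a \emph{divisor of the discriminant} $d$. Hence $\kappa(R)$ takes only \emph{finitely many} values. Moreover, $g^*(w)$ (and the parity of $\sigma(w)$, by Proposition~\ref{prop:parity-of-index}) depends only on the residue of $w$ modulo a fixed sublattice. This lets you split the sum over $\pm R$ (equivalently over primitive $w$, with a factor $\tfrac14$) into a \emph{finite} outer sum over divisors $k\mid D=\lcm(2,d)$ and $\ell\mid 2$, in each piece of which $\kappa$ is constant, so $\phi^{}_{\mathsf{wr},\text{even}}(\kappa;s)$ and $\phi^{}_{\mathsf{wr},\text{odd}}(\kappa;s)$ factor out exactly. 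What remains is, for each $(k,\ell)$, a restricted Epstein-type sum
\[
\sum_{\substack{\gcd(m,n)=1\\ \gcd(m,D)=k,\ \gcd(n,2)=\ell}}\frac{\gcd(k,d)^s}{Q(m,n)^s}\,,
\]
and Appendix~\ref{app-sec:epst} shows each such sum is meromorphic on $\{\RE(s)>\tfrac12\}$ with a simple pole at $s=1$. The product of this simple pole with the simple pole of $\phi^{}_{\mathsf{wr},\bullet}$ gives the order-$2$ pole, and Delange finishes as you say. There is no need for any uniformity argument, no weighted series $\sum_{\pm R}\kappa(R)^{1-s}\sigma(R)^{-s}$ to identify, and no Euler-summation remainder to control across varying $\kappa$: the finiteness of the value set of $\kappa$ makes the factorisation exact.
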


\begin{proof}
  We have already shown that $\phi^{}_{\mathsf{wr},\text{\rm
      even}}(\kappa;s)$ and $\phi^{}_{\mathsf{wr},\text{\rm
      odd}}(\kappa;s)$ are analytic in the half plane $\{ \RE(s) >
  \frac{1}{2} \}$ except for $s=1$, where both functions have a simple
  pole. The same holds true for $\Phi^{\triangle}_{\Gam}(s)$.  It thus
  remains to analyse the sums over the pairs of coincidence
  reflections in Proposition~\ref{prop:rational-gen}.  By
  Lemma~\ref{lem:BRS}, summing over all pairs of coincidence
  reflections is equivalent to summing over all four-element subsets
  $\{\pm w, \pm z\}$ of primitive orthogonal lattice vectors. Since
  these sets are disjoint, we can as well sum over all primitive
  vectors in $\Gam$, obtaining each summand exactly four times.  As
  earlier, we denote by $\Gam_w$ the BRS-sublattice corresponding to
  $\{\pm w, \pm z\}$, and we define $\sigma(w) := [\Gam : \Gam_w ]$,
  the index of $\Gam_w$ in $\Gam$. Finally, we use the notation
  $\kappa(w)=\frac{|w|}{|z|}$ for the quantity $\kappa$ introduced in
  Remark~\ref{rem:kappa}.  We thus obtain
\begin{align*}
  \Phi^{}_{\Gam, \mathsf{wr}}(s) - 2\ts \Phi^{\triangle}_{\Gam}(s) & \, =
  \, \frac{1}{4} \sum_{\substack{ w \text{ primitive}\\
      \sigma(w) \text{ odd}}}
  \frac{1}{\sigma(w)^s}\, \phi^{}_{\mathsf{wr},\text{even}}(\kappa(w);s) \\
  & \quad + \frac{1}{4} \sum_{\substack{ w \text{ primitive}\\
      \sigma(w) \text{ even}}}
  \frac{1}{\sigma(w)^s}\ts \left(\phi^{}_{\mathsf{wr},\text{even}}
    (\kappa(w);s) + 2^s \ts 
       \phi^{}_{\mathsf{wr},\text{odd}}(\kappa(w);s)\right) , 
\end{align*}
where the factor $\frac{1}{4}$ reflects the four elements of
$\{\pm w, \pm z\}$, as observed above.

From now on, we assume w.l.o.g.\ that $\Gam$ is integral and
primitive. Then, by Proposition~\ref{prop:index-of-BRS} of
Appendix~\ref{app-sec:index}, we have $\sigma(w)=
\frac{(w,w)}{g^*(w)}$, and $\kappa(w)= \frac{g^*(w)}{\sqrt{d}}$, where
$d$ is the discriminant of $\Gam$ and $g^*(w)$ is the coefficient of
$w$ in $\Gam^*$.  By Proposition~\ref{prop:index-of-BRS}, $g^*(w)$ is
a divisor of $d$, and can therefore take only a finite number of
distinct values. As a consequence, also $\kappa (w)$ takes only
finitely many values.  Moreover, $g^* (w)$ and $\kappa(w)$ are constant on
the cosets of an appropriate sublattice of $\Gam$. Accordingly, we can
subdivide the above summation into finitely many sums of simpler type.

To work this out explicitly, we choose a basis $\{v_1,v_2\}$ of
$\Gam^*$ such that $\{ v_1, d v_2 \}$ is a basis of $\Gam$, as in
Appendix~\ref{app-sec:index}. Using the quadratic form $Q(m,n) :=
|mv_1+ndv_2|^2$, and similarly the notation $g^*(m,n) :=
g^*(mv_1+ndv_2)$, $\sigma(m,n) := \sigma(mv_1+ndv_2)$ and $\kappa(m,n)
:= \kappa(mv_1+ndv_2)$, for $(m,n)\in \ZZ^2$, we have
$g^*(m,n)=\gcd(m,d)$ and $\sigma(m,n)=\frac{Q(m,n)}{g^*(m,n)}$, by
formula~\eqref{eq:g-star}, assuming $\gcd(m,n)=1$.  It follows from
Proposition~\ref{prop:parity-of-index} that the parity of
$\sigma(m,n)$ only depends on $\gcd(m,D)$ and $\gcd(n,2)$, where
$D=\lcm(2,d)$, and if the residues $m \bmod D$ and $n \bmod 2 $ are
fixed, the index $\sigma(m,n)$ only depends on $Q(m,n)$.  Hence,
\begin{align*}
  \Phi^{}_{\Gam, \mathsf{wr}}(s) - 2\ts \Phi^{\triangle}_{\Gam}(s) \, = \, &
  \frac{1}{4} \sum_{\gcd(m,n)=1}
  \frac{\gcd(m,d)^s}{Q(m,n)^s}  \\
  &\quad \times \left(\phi^{}_{\mathsf{wr},\text{\rm even}}(\kappa(m,n);s)
    + \delta_\sigma(m,n) \, 2^s \ts \phi^{}_{\mathsf{wr},\text{\rm
        odd}}(\kappa(m,n);s)\right)
  \nonumber \\
  \, = \, & \frac{1}{4} \sum_{k\mid D} \sum_{\ell \mid 2}
  \left(\phi^{}_{\mathsf{wr},\text{\rm even}}(\kappa(k,\ell);s) +
    \delta_\sigma(k,\ell)\, 2^s \ts 
  \phi^{}_{\mathsf{wr},\text{\rm odd}}(\kappa(k,\ell);s)\right)  \\
  &\quad \times \sum_{\substack{ \gcd(m,n)=1 \\ \gcd(m,D)=k \\
      \gcd(n,2)=\ell}} \frac{\gcd(k,d)^s}{Q(m,n)^s} \, , \nonumber
\end{align*}
where $\delta_\sigma$ is defined by
\[
\delta_\sigma(m,n) \, := \,
   \begin{cases}
       1 & \text{ if $\sigma(m,n)$ is even} \\
       0 & \text{ if $\sigma(m,n)$ is odd}
   \end{cases}
\]
and depends on $\gcd(m,D)$ and $\gcd(n,2)$, only. By
Remark~\ref{rem:phi-irr}, both $\phi^{}_{\mathsf{wr},\text{\rm
    even}}(\kappa(k,\ell);s)$ and $\phi^{}_{\mathsf{wr},\text{\rm
    odd}}(\kappa(k,\ell);s)$ are analytic in the open half plane
$\{\RE(s)> \frac{1}{2}\}$ except for $s=1$, where both have a simple
pole. Invoking Appendix \ref{app-sec:epst}, this is true of
\[
   \sum_{\substack{ \gcd(m,n)=1 \\ \gcd(m,D)=k \\ \gcd(n,2)=\ell}}
                   \frac{1}{Q(m,n)^s}
\]
as well, which shows that $\Phi^{}_{\Gam, \mathsf{wr}}(s) -
2\ts \Phi^{\triangle}_{\Gam}(s)$, and thus $\Phi^{}_{\Gam,
  \mathsf{wr}}(s)$, has a pole of order $2$ at $s=1$ and is analytic
elsewhere in $\{\RE(s)> \frac{1}{2}\}$, as claimed. The asymptotic
behaviour now follows from an application of Delange's theorem;
compare Theorem~\ref{thm:meanvalues}.
\end{proof}

At this stage, it remains an open question whether, in the general
rational case, the growth rate behaves as $c_1 x\log(x) + c_2 x +
\oo(x)$, like for the square and hexagonal lattices.

\appendix

\section{Some useful results from analytic
number theory}\label{app-sec:ana}

In what follows, we summarise some results from analytic number
theory that we need to determine certain asymptotic properties of the
coefficients of Dirichlet series generating functions. {}For the general
background, we refer to \cite{Apostol} and \cite{Zagier}.

Consider a Dirichlet series of the form $F(s)=\sum_{m=1}^{\infty} a(m)
m^{-s}$. We are interested in the summatory function $A(x)=\sum_{m\leq
  x} a(m)$ and its behaviour for large $x$. Let us give one classic
result for the case that $a(m)$ is real and non-negative.
\begin{theorem} \label{thm:meanvalues} Let $F(s)$ be a Dirichlet
  series with non-negative coefficients which converges for $\RE (s) >
  \alpha > 0$. Suppose that $F(s)$ is holomorphic at all points of the
  line $\{ \RE (s) = \alpha \}$ except at $s=\alpha$. Here, when
  approaching $\alpha$ from the half-plane to the right of it, we
  assume $F(s)$ to have a singularity of the form $F(s) = g(s) +
  h(s)/(s-\alpha)^{n+1}$ where $n$ is a non-negative integer, and both
  $g(s)$ and $h(s)$ are holomorphic at $s=\alpha$. Then, as
  $x\rightarrow\infty$, we have
\begin{equation} \label{meanvalues}
     A(x) \; := \; \sum_{m\leq x} a(m)
          \; \sim \;  \frac{h(\alpha)}{\alpha\cdot n!}
             \; x_{}^{\alpha} \, \bigl(\log(x)\bigr)^{n}  .
\end{equation}
\end{theorem}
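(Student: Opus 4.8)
The plan is to recognise this as Delange's Tauberian theorem and to deduce it from the classical Wiener--Ikehara theorem (the case of a simple pole) by two elementary reductions, one removing the parameter $\alpha$ and one lowering the order $n+1$ of the pole, each followed by an Abel summation. The non-negativity of the $a(m)$ will be used crucially at every stage: it keeps the auxiliary Dirichlet series positive, so Wiener--Ikehara keeps applying, and it makes the relevant partial-summation boundary terms behave. I would not attempt a Perron/contour-shift argument here, since only holomorphy \emph{on} the line $\{\RE(s)=\alpha\}$ (apart from one pole) is assumed, and without an a priori bound on $A(x)$ a shifted integral cannot be estimated.

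\emph{Reduction to $\alpha=1$.} Put $\tilde a(m):=a(m)\,m^{1-\alpha}\ge0$, so that $\tilde F(s):=\sum_m\tilde a(m)m^{-s}=F(s+\alpha-1)$ converges for $\RE(s)>1$, is holomorphic on $\{\RE(s)=1\}$ away from $s=1$, and has there a singularity of the prescribed form of order $n+1$ with top Laurent coefficient $h(\alpha)$. Granting the theorem in this normalised form, $\tilde A(x):=\sum_{m\le x}\tilde a(m)\sim\frac{h(\alpha)}{n!}\,x(\log x)^n$, and Abel summation
\[
  A(x)=\int_{1^-}^{x}t^{\alpha-1}\dd\tilde A(t)=x^{\alpha-1}\tilde A(x)-(\alpha-1)\int_{1}^{x}t^{\alpha-2}\tilde A(t)\dd t ,
\]
combined with $\int_{1}^{x}t^{\alpha-1}(\log t)^n\dd t\sim\frac1\alpha\,x^\alpha(\log x)^n$, yields $A(x)\sim\frac{h(\alpha)}{\alpha\cdot n!}x^\alpha(\log x)^n$. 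This is where the factor $1/\alpha$ comes from, and it reduces everything to the case $\alpha=1$.

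\emph{Reduction of the pole order.} Now take $\alpha=1$ and proceed by induction on $n$. For $n\ge1$ we may assume $a(1)=0$ (subtracting the constant $a(1)$ from $A$ changes nothing), and set $G(s):=\int_s^\infty F(w)\dd w$, the integral along a horizontal ray to the right; since $F$ is holomorphic off $w=1$ and decays geometrically in $\RE(w)$, this ray misses the pole, and one gets $G(s)=\sum_{m\ge2}\frac{a(m)}{\log m}\,m^{-s}$ --- again a Dirichlet series with non-negative coefficients, holomorphic on $\{\RE(s)\ge1\}\setminus\{1\}$, with a pole of order $n$ at $s=1$ and top Laurent coefficient $h(1)/n$. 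By the induction hypothesis, $B(x):=\sum_{2\le m\le x}\frac{a(m)}{\log m}\sim\frac{h(1)/n}{(n-1)!}\,x(\log x)^{n-1}=\frac{h(1)}{n!}\,x(\log x)^{n-1}$, and a second Abel summation $A(x)=\int_{2^-}^{x}\log t\,\dd B(t)=\log x\cdot B(x)-\int_2^x\frac{B(t)}{t}\dd t$, together with $\int_2^x(\log t)^{n-1}\dd t\sim x(\log x)^{n-1}=\oo\bigl(x(\log x)^n\bigr)$, gives $A(x)\sim\frac{h(1)}{n!}x(\log x)^n$. The base case $n=0$ is exactly the classical Wiener--Ikehara theorem for a simple pole, which I would invoke as a black box.

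The main obstacle, then, is essentially one of organisation: the entire analytic weight of the statement sits in the classical Wiener--Ikehara theorem, whose proof (Fej\'er-kernel smoothing of a Laplace transform, together with monotonicity of the partial sums) is the real content --- once that is granted, the rest is bookkeeping. The one place where genuine (if routine) care is needed is in the pole-lowering step: one must check that the ray integral defining $G(s)$ converges and provides a holomorphic continuation up to the line $\{\RE(s)=1\}$ --- this uses the geometric decay of $F$ in $\RE(s)$, holomorphy of $F$ off $s=1$, and the harmless normalisation $a(1)=0$ --- and that the two Abel summations are legitimate, the boundary terms at $t=1$ and $t=2$ vanishing because the relevant partial sums are $0$ just below those points.
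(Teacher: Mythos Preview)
The paper does not actually prove this theorem; it simply records that the statement is a special case of Delange's Tauberian theorem and points to Tenenbaum's formulation \cite[Ch.~II.7, Thm.~15]{Tenenbaum}. Your proposal is therefore more ambitious: you try to derive the result from the smaller black box of Wiener--Ikehara by two reductions. The reduction to $\alpha=1$ is fine, and both Abel summations are handled correctly.

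There is, however, a genuine gap in your pole-lowering step. You assert that $G(s)=\int_s^\infty F(w)\,\dd w$ has ``a pole of order $n$ at $s=1$'', but this need not be true. Writing $h(s)=\sum_{k\ge 0}c_k(s-1)^k$, the function $F$ has residue $c_n=h^{(n)}(1)/n!$ at $s=1$, and integrating the term $c_n(s-1)^{-1}$ produces $-c_n\log(s-1)$. Hence, for $s\to 1^+$,
\[
   G(s) \, = \, \frac{h(1)/n}{(s-1)^{n}} \, + \, (\text{lower-order polar terms})
   \, - \, c_n\ts\log(s-1) \, + \, (\text{holomorphic}) \ts .
\]
Unless the residue $c_n$ happens to vanish (which is not assumed), $G$ has a logarithmic singularity, and the inductive hypothesis as stated in the theorem does not apply to $G$. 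The induction therefore does not close. Attempting to kill the residue by subtracting a multiple of $\zeta(s)$ destroys non-negativity of the coefficients, and iterating the integration only piles up further logarithmic terms.

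The natural repair is to strengthen the inductive hypothesis so as to allow singularities of mixed polar--logarithmic type, but carrying this through is essentially the content of Delange's theorem itself---which is precisely what the paper invokes. So your route, while conceptually attractive, does not in the end bypass Delange with only Wiener--Ikehara as input.
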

The proof follows easily from Delange's theorem, for instance by
taking $q=0$ and $\omega=n$ in Tenenbaum's formulation of it; see
\cite[ch.\ II.7, Thm.\ 15]{Tenenbaum} and references given there.

The critical assumption in Theorem~\ref{thm:meanvalues} is the
behaviour of $F(s)$ along the line $\{ \RE (s) = \alpha \}$. In all
cases where we apply it, this can be checked explicitly. To
do so, we have to recall a few properties of the Riemann zeta
function $\zeta(s)$, and of the Dedekind zeta functions of
imaginary quadratic fields.

It is well-known that $\zeta(s)$ is a meromorphic function in the
complex plane, and that it has a sole simple pole at $s=1$ with
residue $1$; see \cite[Thm.\ 12.5(a)]{Apostol}.  It has no zeros in
the half-plane $\{ \RE (s)\geq 1 \}$; compare \cite[ch.\ II.3, Thm.\
9]{Tenenbaum}.  The values of $\zeta(s)$ at positive even integers are
known \cite[Thm.\ 12.17]{Apostol} and we have
\begin{equation}
     \zeta(2) \; = \; \frac{\pi^2}{6} \ts .
\end{equation}
This is all we need to know for this case.

Let us now consider an imaginary quadratic field $K$, written
as $K=\QQ(\sqrt{d}\,)$ with $d<0$ squarefree. The corresponding
discriminant is 
\[
   D \, = \, \begin{cases}
   4 \ts d, & \text{if $d\equiv 2,3 \bmod{4}$}, \\
    d ,    & \text{if $d\equiv 1 \bmod{4}$},
   \end{cases}
\]
see \cite[\S 10]{Zagier} for more. We need the Dedekind zeta function
of $K$ (with fundamental discriminant $D < 0$). It follows from
\cite[\S 11, Eq.\ (10)]{Zagier} that it can be written as
\begin{equation}
  \zeta^{}_{K}(s) \; = \; \zeta(s) \cdot L(s,\chi^{}_{D}) 
\end{equation}
where $L(s,\chi^{}_{D})=\sum_{m=1}^{\infty}\chi^{}_{D}(m)\, m^{-s}$ is
the $L$-series \cite[Ch.\ 6.8]{Apostol} of the primitive Dirichlet
character $\chi^{}_{D}$. The latter is a totally multiplicative
arithmetic function, and thus completely specified by
\begin{equation}
  \chi^{}_{D}(p) \, = \, \bigl( \myfrac{D}{p} \bigr), 
\end{equation} 
for odd primes, where $\bigl( \frac{D}{p} \bigr)$ is the usual
Legendre symbol, together with
\[
    \bigl( \myfrac{D}{2} \bigr) \, = \, \begin{cases}
     0, & \text{if $D\equiv 0 \bmod{4}$}, \\
     1, & \text{if $D\equiv 1 \bmod{8}$}, \\
     -1, & \text{if $D\equiv 5 \bmod{8}$}.  \end{cases}
\]
$L(s,\chi^{}_{D})$ is an entire function \cite[Thm.\ 12.5]{Apostol}.
Consequently, $\zeta^{}_{K}(s)$ is meromorphic, and its only pole is
simple and located at $s=1$. The residue is $L(1,\chi^{}_{D})$, and
from \cite[\S 9, Thm.~2]{Zagier} we get the simple formula
\begin{equation}\label{Lseries-at-one}
  L(1, \chi^{}_{D}) \, = \, - \ts \myfrac{\pi}{\lvert D \rvert^{3/2}}
  \sum_{n=1}^{\lvert D \rvert -1} n \, \chi^{}_{D} (n) \ts .
\end{equation}
In particular, for the two fields $\QQ(\ii)$ and $\QQ(\rho)$,
one has the values $\pi/4$ and $\pi/ 3 \sqrt{3}$, respectively.
\medskip

Our next goal is an estimate on sums of the form
$\sum_{\ell < n < \alpha \ell} n^{-s}$ for $\ell \in \NN$,
$\alpha > 1$ and $s>0$. Invoking Euler's summation formula from
\cite[Thm.~3.1]{Apostol}, one has
\begin{equation}\label{eq:euler-sum}
   \sum_{\ell < n \le \alpha \ell} \myfrac{1}{n^{s}} \; = \,
   \int_{\ell}^{\alpha \ell} \myfrac{\dd x}{x^{s}} \, -
   \int_{\ell}^{\alpha \ell} \bigl(x - [x] \bigr)\,
   \myfrac{ s \dd x}{x^{s+1}} \, + \, 
   \myfrac{[\alpha \ell] - \alpha \ell}{(\alpha \ell)^{s}}
   \, - \, \myfrac{[\ell] - \ell}{\ell^{s}} \ts .
\end{equation}
The last term vanishes (since $\ell\in\NN$), while the second last
does whenever $\alpha\ell \in \NN$ (otherwise, it is negative).
Since the second integral on the right hand side is strictly
positive (due to $\alpha > 1$), we see that
\begin{equation}\label{eq:int-def}
    \sum_{\ell < n < \alpha \ell} \myfrac{1}{n^{s}} \; \le 
    \sum_{\ell < n \le \alpha \ell} \myfrac{1}{n^{s}} \, < \,
    I_{s} \, :=   \int_{\ell}^{\alpha \ell} \myfrac{\dd x}{x^{s}} 
    \, = \, \myfrac{1-\alpha^{1-s}}{s-1} \ts \ell^{1-s} .
\end{equation}
Observing next (once again due to $\alpha > 1$) that
\[
     \int_{\ell}^{\alpha \ell} \bigl(x - [x] \bigr)\,
     \myfrac{ s \dd x}{x^{s+1}} \, < \, \myfrac{1}{\ell^{s}}
     \, - \, \myfrac{1}{(\alpha \ell)^{s}} \ts ,
\]
one can separately consider the two cases $\alpha \ell \not\in \NN$ 
and $\alpha \ell \in \NN$ to verify that we always get
\[
   \sum_{\ell < n < \alpha \ell} \myfrac{1}{n^{s}} \, >
   I_{s} - \myfrac{1}{\ell^{s}} \ts .
\]
This can immediately be generalised to sums of the form $\sum_{\ell <
  n < \alpha \ell + \beta} (n+\gamma)^{-s}$ with $\beta,\gamma \ge 0$,
which we summarise as follows.

\begin{lemma}\label{lem:est-sum}
  Let\/ $\ell\in\NN$, $\alpha > 1$, $\beta \ge 0$ and\/ $0\le \gamma <
  1$.  If $s \ge 0$, one has the estimate
\[
   I_{s} - \myfrac{1}{(\ell + \gamma)^{s}} \;  <
    \sum_{\ell < n < \alpha \ell + \beta} \myfrac{1}{(n + \gamma)^{s}}
    \, < \, I_{s} \ts ,
\]
  with the integral $I_{s} = \int_{\ell}^{\alpha \ell + \beta}
  \frac{\dd x}{(x+\gamma)^{s}} $ as the generalisation of
  that in Eq.~\eqref{eq:int-def}.  \qed
\end{lemma}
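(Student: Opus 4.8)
The plan is to mimic the computation already performed above for the special case $\beta=\gamma=0$, now applied to the function $f(x)=(x+\gamma)^{-s}$ on the interval $[\ell,\alpha\ell+\beta]$. First I would invoke Euler's summation formula \cite[Thm.~3.1]{Apostol} to write
\[
   \sum_{\ell<n\le\alpha\ell+\beta}\frac{1}{(n+\gamma)^{s}}
   \,=\, I_{s}-\int_{\ell}^{\alpha\ell+\beta}\bigl(x-[x]\bigr)\frac{s\dd x}{(x+\gamma)^{s+1}}
   +\frac{[\alpha\ell+\beta]-(\alpha\ell+\beta)}{(\alpha\ell+\beta+\gamma)^{s}}\,,
\]
where $I_{s}=\int_{\ell}^{\alpha\ell+\beta}(x+\gamma)^{-s}\dd x$ is the integral from the statement, and where the contribution at the left endpoint has been dropped because $\ell\in\NN$.

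For the upper bound I would note that, since $\alpha>1$ and $\beta\ge 0$, the interval $[\ell,\alpha\ell+\beta]$ has positive length, so the fractional-part integral is strictly positive, while the remaining boundary term is $\le 0$. Hence the displayed sum is $<I_{s}$, and since passing to the strict range $\ell<n<\alpha\ell+\beta$ only removes non-negative terms, the same bound holds for $\sum_{\ell<n<\alpha\ell+\beta}(n+\gamma)^{-s}$.

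For the lower bound I would estimate the fractional-part integral from above, using $x-[x]<1$, by $\int_{\ell}^{\alpha\ell+\beta}s(x+\gamma)^{-s-1}\dd x=(\ell+\gamma)^{-s}-(\alpha\ell+\beta+\gamma)^{-s}$, and then distinguish two cases. If $\alpha\ell+\beta\notin\NN$, the boundary term satisfies $\frac{[\alpha\ell+\beta]-(\alpha\ell+\beta)}{(\alpha\ell+\beta+\gamma)^{s}}>-(\alpha\ell+\beta+\gamma)^{-s}$ and the sums over $\ell<n\le\alpha\ell+\beta$ and over $\ell<n<\alpha\ell+\beta$ agree; if $\alpha\ell+\beta\in\NN$, that boundary term vanishes but removing the endpoint costs exactly $(\alpha\ell+\beta+\gamma)^{-s}$. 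In both cases the two $(\alpha\ell+\beta+\gamma)^{-s}$-contributions cancel, leaving $\sum_{\ell<n<\alpha\ell+\beta}(n+\gamma)^{-s}>I_{s}-(\ell+\gamma)^{-s}$, as desired.

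I do not expect a genuine obstacle here: the whole argument is elementary bookkeeping, structurally identical to the case $\beta=\gamma=0$ treated in the text. The only mildly delicate points are keeping the inequalities strict --- which rests on $\alpha>1$ forcing the fractional-part integral to be positive --- and the case distinction according to whether the right endpoint $\alpha\ell+\beta$ is an integer. (The degenerate value $s=0$ would be handled by direct counting; it is in any case irrelevant for our applications, where $\alpha=\sqrt{3}$ and $\beta$ is rational, so that $\alpha\ell+\beta\notin\NN$.)
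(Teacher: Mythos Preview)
Your proposal is correct and takes essentially the same approach as the paper: the paper derives the special case $\beta=\gamma=0$ via Euler's summation formula, the bound $\int_\ell^{\alpha\ell}(x-[x])\,s\,x^{-s-1}\,\dd x<\ell^{-s}-(\alpha\ell)^{-s}$, and the two-case analysis on whether $\alpha\ell\in\NN$, and then simply states that this ``can immediately be generalised'' to the sums with parameters $\beta,\gamma$, which is exactly what you have written out.
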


Let us finally mention that
\[
   \myfrac{1 - \alpha^{1-s}}{s-1}  \, = \,
   \log (\alpha) \sum_{m\ge 0} 
   \frac{\bigl( \log (\alpha) \ts (1-s)\bigr)^{m}}{(m+1) !} \ts ,
\]
so that this function is analytic in the entire complex plane. 
In particular, one has the asymptotic expression
$\frac{1 - \alpha^{1-s}}{s-1} = \log (\alpha) + \mathcal{O} \bigl(
\lvert 1-s \rvert \bigr)$ for $s \to 1$.

\section{Asymptotics of similar sublattices}\label{app-sec:asym}

We have sketched how to determine the asymptotics of the number of
well-rounded sublattices of the square and hexagonal lattices. As a
by-product of these calculations, and as a refinement of the results
from~\cite{Baake+Grimm}, we obtain the asymptotics of the number of
similar and primitive similar sublattices as follows.

\begin{theorem} 
  The asymptotics of the number of similar and of primitive similar
  sublattices of the square lattice is given by
\begin{align}
\label{eq:asym-ssl-sq}
  \sum_{n\leq x}b^{}_{\square}(n) 
     & \, = \, L(1,\chi^{}_{-4})\,x + \OO\bigl(\sqrt{x}\, \bigr) 
        \, = \, \frac{\pi}{4}\,x + \OO\bigl(\sqrt{x}\, \bigr) \\
\shortintertext{and}
  \sum_{n\leq x}b^{\mathsf{pr}}_{\square}(n)  
       & \, = \, \frac{L(1,\chi^{}_{-4})}{\zeta(2)}\, x + 
          \OO\bigl(\sqrt{x}\log(x)\bigr) \, = \, 
          \frac{3}{2\pi}\,x + \OO\bigl(\sqrt{x}\log(x)\bigr) \ts .
\end{align}
\end{theorem}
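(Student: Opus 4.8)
The plan is to identify the arithmetic function $b^{}_{\square}(n)$ explicitly: since the generating function is $\Phi^{}_{\square}(s) = \zeta(s)\ts L(s,\chi^{}_{-4})$, one has $b^{}_{\square} = \one * \chi^{}_{-4}$, that is, $b^{}_{\square}(n) = \sum_{d \mid n} \chi^{}_{-4}(d)$ (the number of ideals of norm $n$ in $\ZZ[\ii]$). The single analytic input needed is that the partial sums $F(y) := \sum_{n \le y} \chi^{}_{-4}(n)$ are bounded, $\lvert F(y)\rvert \le 1$, because $\chi^{}_{-4}$ has period $4$ with vanishing period sum; in particular the tail of its $L$-series satisfies $\sum_{d > y} \chi^{}_{-4}(d)/d = \OO(1/y)$ by partial summation. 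First I would apply Dirichlet's hyperbola method to $b^{}_{\square} = \chi^{}_{-4} * \one$, writing
\[
   \sum_{n \le x} b^{}_{\square}(n) \, = \, \sum_{d \le \sqrt{x}} \chi^{}_{-4}(d) \Bigl\lfloor \myfrac{x}{d} \Bigr\rfloor \, + \sum_{e \le \sqrt{x}} F\Bigl(\myfrac{x}{e}\Bigr) \, - \, F(\sqrt{x})\,\lfloor \sqrt{x}\rfloor \ts .
\]
The first sum equals $x \sum_{d \le \sqrt{x}} \chi^{}_{-4}(d)/d + \OO(\sqrt{x}) = L(1,\chi^{}_{-4})\ts x + \OO(\sqrt{x})$, using the tail estimate above, while the remaining two terms are each $\OO(\sqrt{x})$ since $\lvert F\rvert \le 1$. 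This yields the first asymptotic formula, with $L(1,\chi^{}_{-4}) = \pi/4$ from Eq.~\eqref{Lseries-at-one}.

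For the primitive count, I would use $\Phi^{\mathsf{pr}}_{\square}(s) = \Phi^{}_{\square}(s)/\zeta(2s)$ and the expansion $1/\zeta(2s) = \sum_k \mu(k)\ts k^{-2s}$, which gives $b^{\mathsf{pr}}_{\square}(n) = \sum_{k^2 \mid n} \mu(k)\, b^{}_{\square}(n/k^2)$. Summing and substituting $n = k^2 m$,
\[
   \sum_{n \le x} b^{\mathsf{pr}}_{\square}(n) \, = \sum_{k \le \sqrt{x}} \mu(k) \sum_{m \le x/k^2} b^{}_{\square}(m) \, = \sum_{k \le \sqrt{x}} \mu(k) \Bigl( L(1,\chi^{}_{-4})\ts \myfrac{x}{k^2} + \OO\bigl(\sqrt{x}/k\bigr)\Bigr) ,
\]
by the already proved first part applied with $x$ replaced by $x/k^2$. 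The main term is $L(1,\chi^{}_{-4})\ts x \sum_{k \le \sqrt{x}} \mu(k)/k^2 = \bigl(L(1,\chi^{}_{-4})/\zeta(2)\bigr) x + \OO(\sqrt{x})$, since the tail $\sum_{k > \sqrt{x}} \mu(k)/k^2$ is $\OO(1/\sqrt{x})$; and the accumulated error terms contribute $\OO\bigl(\sqrt{x}\sum_{k \le \sqrt{x}} 1/k\bigr) = \OO(\sqrt{x}\log x)$. With $L(1,\chi^{}_{-4})/\zeta(2) = (\pi/4)/(\pi^2/6) = 3/(2\pi)$, this is the second asymptotic formula.

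Nothing in this argument is genuinely hard; it is all classical in the style of \cite[Sec.~3.5]{Apostol}. The two points that require care are the bookkeeping of the $\OO(\sqrt{x})$ error in the hyperbola split (keeping the smoothing error $\lfloor x/d\rfloor - x/d$, the two ``tail'' sums, and the truncation of the $L$-series tail all at size $\OO(\sqrt{x})$), and the fact that in the primitive case the convolution with $\mu$ unavoidably introduces the harmonic factor $\sum_{k\le\sqrt{x}} 1/k$, which is the source of the extra $\log x$. One could alternatively read off the leading constants directly from Theorem~\ref{thm:meanvalues}, as the residues of $\Phi^{}_{\square}(s)$ and of $\Phi^{}_{\square}(s)/\zeta(2s)$ at $s=1$, but the elementary route above is what produces the stated sharp error terms.
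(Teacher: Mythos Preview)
Your proof is correct and follows essentially the same approach as the paper's: write $b^{}_{\square}=\chi^{}_{-4}*\one$ and apply the hyperbola method (the paper's Eq.~\eqref{eq:diri-conv2}) for the first formula, then write $b^{\mathsf{pr}}_{\square}=\nu*b^{}_{\square}$ with $\nu(n)=\mu(\sqrt{n})$ on squares and use the one-sided convolution summation (the paper's Eq.~\eqref{eq:diri-conv1}) together with the first formula for the second. The details you supply are exactly the ones the paper's sketch omits.
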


\begin{proof}[Sketch of proof]
  Note that $b^{}_{\square}(n)=(\chi^{}_{-4}*1)(n)$. We now get the
  asymptotics of its summatory function by an application of
  Eq.~\eqref{eq:diri-conv2}.  Observe
  $b^{\mathsf{pr}}_{\square}=\nu*b^{}_{\square}$, where
  $\nu(n):=\mu(\sqrt{n})$ is defined to be $0$ if $n$ is not a square
  and $\mu$ is the Moebius function.  An application of
  Eq.~\eqref{eq:diri-conv1} then yields the result.
\end{proof}

Similarly, one proves the following result.
\begin{theorem} 
  The asymptotics of the number of similar and of primitive similar
  sublattices of the hexagonal lattice is given by
\begin{align}
\label{eq:asym-ssl-tr}
  \sum_{n\leq x}b^{}_{\triangle}(n) 
     & \, = \, L(1,\chi^{}_{-3})\,x + \OO\bigl(\sqrt{x}\, \bigr) 
        \, = \, \frac{\pi}{3\sqrt{3}}\,x + \OO\bigl(\sqrt{x}\, \bigr) \\
\shortintertext{and}
  \sum_{n\leq x}b^{\mathsf{pr}}_{\triangle}(n)  
       & = \frac{L(1,\chi^{}_{-3})}{\zeta(2)}\, x + 
            \OO\bigl(\sqrt{x}\log(x)\bigr)
           = \frac{2}{\pi\sqrt{3}}\,x + \OO\bigl(\sqrt{x}\log(x)\bigr)),
\end{align}
as $x\to\infty$.    \qed
\end{theorem}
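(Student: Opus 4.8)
The plan is to follow, almost verbatim, the sketch given above for the similar sublattices of the square lattice, replacing $\chi^{}_{-4}$ by $\chi^{}_{-3}$ throughout. First, Eq.~\eqref{eq:Dedekind-tri} gives $\Phi^{}_{\triangle}(s) = \zeta(s)\ts L(s,\chi^{}_{-3})$, so that $b^{}_{\triangle} = \chi^{}_{-3} * 1$ as an arithmetic function, with $1$ the constant-one function. The summatory function $\sum_{n\le x} b^{}_{\triangle}(n)$ is then evaluated by Dirichlet's hyperbola method, i.e.\ by the symmetric splitting of Eq.~\eqref{eq:diri-conv2} applied with $f = \chi^{}_{-3}$ and $g = 1$. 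The decisive feature is that the partial sums $S(t) := \sum_{n\le t}\chi^{}_{-3}(n)$ are bounded by $1$ (since $\chi^{}_{-3}$ is periodic of period $3$ with vanishing sum over a period). Consequently, every piece of the splitting in which the inner sum runs over an interval $m < d \le x/m$ contributes only $\OO(\sqrt{x})$ in total, while the remaining terms reduce, up to $\OO(\sqrt{x})$, to $\sum_{m\le\sqrt{x}}\chi^{}_{-3}(m)\lfloor x/m\rfloor$; writing $\lfloor x/m\rfloor = x/m + \OO(1)$ and estimating the tail $\sum_{m>\sqrt{x}}\chi^{}_{-3}(m)/m = \OO(1/\sqrt{x})$ via $S$ again, this equals $L(1,\chi^{}_{-3})\ts x + \OO(\sqrt{x})$. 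The explicit value $L(1,\chi^{}_{-3}) = \pi/(3\sqrt{3})$ is the one recorded right after Eq.~\eqref{Lseries-at-one}, which yields the first displayed identity.

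For the primitive count, I would invoke the standard Möbius relation: since $\Phi^{\mathsf{pr}}_{\triangle}(s) = \Phi^{}_{\triangle}(s)/\zeta(2s)$ and $1/\zeta(2s) = \sum_{n\ge1}\nu(n)\ts n^{-s}$ with $\nu(n) := \mu(\sqrt{n})$ if $n$ is a perfect square and $\nu(n):=0$ otherwise, one has $b^{\mathsf{pr}}_{\triangle} = \nu * b^{}_{\triangle}$. Summing via Eq.~\eqref{eq:diri-conv1} and inserting the asymptotics just obtained gives
\begin{align*}
  \sum_{n\le x} b^{\mathsf{pr}}_{\triangle}(n)
  & \, = \, \sum_{m\le\sqrt{x}}\mu(m)\Bigl( L(1,\chi^{}_{-3})\ts\myfrac{x}{m^2}
       + \OO\bigl(\sqrt{x}/m\bigr)\Bigr) \\
  & \, = \, \frac{L(1,\chi^{}_{-3})}{\zeta(2)}\ts x + \OO\bigl(\sqrt{x}\log(x)\bigr) ,
\end{align*}
where the main term uses $\sum_{m\le\sqrt{x}}\mu(m)/m^2 = 1/\zeta(2) + \OO(1/\sqrt{x})$ and the error collects the harmonic sum $\sum_{m\le\sqrt{x}}1/m = \OO(\log x)$. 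Since $L(1,\chi^{}_{-3})/\zeta(2) = \bigl(\pi/(3\sqrt{3})\bigr)\big/\bigl(\pi^2/6\bigr) = 2/(\pi\sqrt{3})$, the second displayed identity follows.

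I do not expect a genuine obstacle here; the argument is routine bookkeeping and runs exactly parallel to the square-lattice case already sketched. The only point requiring (mild) care is keeping the error terms at $\OO(\sqrt{x})$ and $\OO(\sqrt{x}\log x)$ when truncating the two auxiliary Dirichlet series $L(s,\chi^{}_{-3})$ and $1/\zeta(2s)$ near $s=1$, and this is precisely what the boundedness of the character partial sums, respectively the absolute convergence of $\sum_m\mu(m)/m^2$, provides.
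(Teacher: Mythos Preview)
Your proposal is correct and follows essentially the same approach as the paper, which simply says ``Similarly, one proves the following result'' in reference to the square-lattice sketch: write $b^{}_{\triangle}=\chi^{}_{-3}*1$ and apply the hyperbola identity~\eqref{eq:diri-conv2}, then write $b^{\mathsf{pr}}_{\triangle}=\nu*b^{}_{\triangle}$ and apply~\eqref{eq:diri-conv1}. Your added detail (boundedness of the partial sums of $\chi^{}_{-3}$, the tail estimate for $L(1,\chi^{}_{-3})$, and the truncation of $\sum_m\mu(m)/m^2$) is exactly what the paper's sketch leaves implicit.
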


\section{The index of BRS sublattices}\label{app-sec:index}

Let us complement the discussion of rational orthogonal frames and BRS
sublattices as introduced in Lemma~\ref{lem:BRS}.  We start with an
arbitrary rational, primitive, planar lattice $\Gam$ and denote by
$(v,w) \in \ZZ$ with $v,w \in \Gam$ the given positive definite
integer-valued primitive symmetric bilinear form on $\Gam$, extended
to the rational space $\QQ \Gam$.  Primitivity means that the form is
not a proper integral multiple of another form; it is equivalent to
the condition that $\gcd (a,b,c) = 1$, where $G =
\left( \begin{smallmatrix} a & b \\ b & c \end{smallmatrix} \right)$
is the Gram matrix with respect to an arbitrary basis $v_1,v_2$ of
$\Gam$.

In the following, we need the notion of the \emph{coefficient}
$g^{}_\Gam(v)$ of an arbitrary vector $v \in \QQ \Gam$ with respect to
$\Gam$. This is the unique positive rational number $g$ such that
$v=gv_0$, where $v_0 \in \Gam$ is primitive in $\Gam$. Equivalently,
$g^{}_\Gam(v)$ is the unique positive generator of the rank one
$\ZZ$-submodule of $\QQ$ consisting of all $q \in \QQ$ such that
$q^{-1}v \in \Gam$. So, a vector $v$ is primitive in $\Gam$ if and
only if $g^{}_\Gam (v)=1$, in accordance with the first
definition. Still another description of $g^{}_\Gam(v)$ is the gcd
(taken in $\QQ$) of the coefficients of $v$ with respect to an
arbitrary $\ZZ$-basis of $\Gam$. Below, we shall use the coefficient
$g^* := g^{}_{\Gam^*}$ in particular with respect to the dual lattice
$\Gam^* := \{w \in \QQ \Gam \mid \forall v \in \Gam : (v,w) \in \ZZ
\}$.

For an arbitrary primitive vector $w \in \Gam $, we recall the
notation $\Gam_w$ for the BRS sublattice spanned by $w$ and its
orthogonal sublattice $w^\perp \cap \Gam$, i.e.\ by $w$ and $z$, where
$z$ is the primitive lattice vector orthogonal to $w$ (unique up to
sign).  The main result of this appendix is to compute the index of
$\Gam_w \in \Gam$ as follows.

\begin{prop}\label{prop:index-of-BRS}
  Let $w$ be a primitive vector in a planar lattice $\Gam$ with
  primitive symmetric bilinear form, let $g^*(w)$ denote its
  coefficient in the dual lattice $\Gam^*\subseteq \Gam$. Then,
  $g^*(w)$ is a divisor of the discriminant $d$ of the lattice, and
\[
   [\Gam : \Gam_w ] \, = \, \frac{(w,w)}{g^*(w)} \, .
\]
\end{prop}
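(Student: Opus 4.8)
The plan is to work with a convenient basis adapted to the dual lattice. Since $\Gam$ is primitive and integral, the dual $\Gam^*$ contains $\Gam$, and the quotient $\Gam^*/\Gam$ is a finite abelian group of order $d = \mathrm{disc}(\Gam)$. First I would choose a basis $\{v_1, v_2\}$ of $\Gam^*$ such that $\{v_1, d\, v_2\}$ is a basis of $\Gam$; this is possible because $\Gam^*/\Gam$ is cyclic in rank $2$ (the Smith normal form of the inclusion $\Gam \hookrightarrow \Gam^*$ has elementary divisors $1$ and $d$, as the form is primitive). Writing $w = m v_1 + n d\, v_2$ with $\gcd(m,n) = 1$, one computes directly that the coefficient $g^*(w)$ of $w$ in $\Gam^*$ — i.e.\ the gcd of its coordinates with respect to $\{v_1, v_2\}$, which are $m$ and $nd$ — equals $\gcd(m, nd) = \gcd(m, d)$, using $\gcd(m,n)=1$. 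This already shows $g^*(w) \mid d$, proving the first assertion.

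Next I would compute the index $[\Gam : \Gam_w]$. The sublattice $\Gam_w = \ZZ w \oplus \ZZ z$, where $z$ is the primitive lattice vector orthogonal to $w$. The key point is the identity relating the discriminant of $\Gam_w$ to that of $\Gam$: one has $\mathrm{disc}(\Gam_w) = [\Gam : \Gam_w]^2 \, d$. Since $w \perp z$, the Gram matrix of $\Gam_w$ is diagonal, so $\mathrm{disc}(\Gam_w) = (w,w)(z,z)$. Hence
\[
  [\Gam : \Gam_w]^2 \, = \, \frac{(w,w)(z,z)}{d}\, .
\]
It therefore remains to show $(z,z)/d = (w,w)/g^*(w)^2$, equivalently $(z,z) \, g^*(w)^2 = (w,w)\, d$, which would give $[\Gam:\Gam_w] = (w,w)/g^*(w)$ after taking square roots (all quantities positive).

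To establish this last identity, I would exploit orthogonality together with the dual lattice. The vector $w^{\perp} \in \QQ\Gam$ spanning the orthogonal line is, up to scalar, obtained by applying the adjugate/rotation; concretely, in terms of the Gram data the orthogonal direction to $w$ within $\QQ\Gam$ is computed by solving $(w, \cdot) = 0$, and one finds that the \emph{dual} description is cleanest: the functional $v \mapsto (w,v)$ corresponds, via the form, to $w$ itself, and the annihilator line is spanned by a vector whose coefficient in $\Gam$ relates to $(w,w)/g^*(w)$. Specifically, $\frac{1}{(w,w)}$ times an appropriate integer combination shows that $z$, the primitive orthogonal vector, satisfies $g^{}_\Gam(z)\cdot(\text{something}) = \dots$; carrying the bookkeeping through with the basis $\{v_1, v_2\}$ above — where the Gram matrix of $\Gam$ with respect to $\{v_1, d v_2\}$ and that of $\Gam^*$ with respect to $\{v_1, v_2\}$ are related by scaling rows/columns by $d$ — reduces everything to an elementary gcd computation.

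The main obstacle I anticipate is precisely this last bookkeeping: correctly tracking how primitivity of $z$ in $\Gam$ (as opposed to in $\Gam^*$ or in some intermediate lattice) interacts with the factor $g^*(w)$, and making sure the square root is extracted with the correct sign/normalisation. The cleanest route is likely to fix the explicit coordinates $w = m v_1 + n d v_2$, write down the Gram matrix $\bigl(\begin{smallmatrix} a & b \\ b & c\end{smallmatrix}\bigr)$ of $\Gam^*$ in the basis $\{v_1,v_2\}$ (so that of $\Gam$ in $\{v_1, dv_2\}$ is $\bigl(\begin{smallmatrix} a & bd \\ bd & cd^2\end{smallmatrix}\bigr)$, with determinant $d$ forcing $ac - b^2 = 1/d$, i.e.\ the $\Gam^*$-form has discriminant $1/d$), then solve $(w,x)=0$ for $x \in \QQ\Gam$, clear denominators to get the primitive $z \in \Gam$, and read off both $(w,w)$ and $(z,z)$ as explicit quadratic expressions in $m,n$; the identity $(z,z)\,g^*(w)^2 = (w,w)\,d$ then becomes a polynomial identity that follows from $ac-b^2 = 1/d$ and $g^*(w) = \gcd(m,d)$. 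This is routine once set up, so I would not belabour it in the write-up, but it is where all the care is needed.
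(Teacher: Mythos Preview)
Your approach via the discriminant relation $[\Gam:\Gam_w]^2 \, d = (w,w)(z,z)$ is sound in principle and genuinely different from the paper's, but you have not actually established the key identity $(z,z)\,g^*(w)^2 = (w,w)\,d$: you call it a ``polynomial identity'' that is ``routine once set up,'' yet this is exactly where the arithmetic content lives, and it is \emph{not} a polynomial identity. Concretely, in your Smith-normal-form basis with $w = m v_1 + nd\,v_2$ and Gram matrix $\bigl(\begin{smallmatrix}A & B\\ B & C\end{smallmatrix}\bigr)$ for $\Gam$ in $\{v_1,dv_2\}$ (so $AC-B^2=d$), the orthogonal line is spanned by $(p_0,q_0)=\bigl(-(Bm+Cn),\,Am+Bn\bigr)$, and one does get the polynomial identity $A p_0^2 + 2Bp_0q_0 + Cq_0^2 = d\,(w,w)$. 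But to obtain the \emph{primitive} $z$ you must divide by $g := \gcd(p_0,q_0)$, and what remains is the gcd identity $g = \gcd(m,d)$. This is true, but it relies on the facts $d\mid B$, $d\mid C$ and $\gcd(A,d)=1$, which follow from $v_2 \in \Gam^*$ together with primitivity of the form --- and none of which you invoke. Without carrying this out, your reduction is essentially a restatement of the claim.

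The paper sidesteps this difficulty by a different choice of basis: it takes $v_1 = w$ itself and completes to a basis $v_1,v_2$ of $\Gam$, then works with the \emph{dual basis} $v_1^*,v_2^*$ of $\Gam^*$. Since $(w,v_2^*)=0$ by construction, the primitive orthogonal vector is just $z = t v_2^*$ for the least $t$ with $tv_2^*\in\Gam$, and both $g^*(w)=\gcd(a,b)$ and $t = d/\gcd(a,b)$ (with $a=(w,w)$) are read off from $G$ and $G^{-1}$ directly; the index is then a single $2\times 2$ determinant. This bypasses solving for $z$ and extracting a gcd altogether, which is what makes your route laborious.
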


\begin{proof}
  The first claim follows easily from the fact that $d$ is equal to
  the order of the factor group $\Gam^* / \Gam$, but it is also a
  consequence of the following computation leading to a proof of the
  second claim.  Since $w$ is primitive, we can complement it to a
  basis $v_1=w,v_2$ of $\Gam$. Consider the dual basis $v^*_1, v^*_2$
  with respect to the given scalar product; it is a $\ZZ$-basis of
  $\Gam^*$. Writing the above vector $z$ as $z=sv^*_1 +tv^*_2$ with
  $s,t \in \ZZ$ clearly leads to $s=0$, and $t$ is the smallest
  integer such that $tv^*_2 \in \Gam$. If $G$ is the Gram matrix with
  respect to $v_1,v_2$ as above, then $G$ is also the transformation
  matrix which expresses the original basis vectors $v_1,v_2$ in terms
  of their dual vectors, in particular $v_1=a v^*_1 + b v^*_2$, which
  shows that the coefficient of $w=v_1$ in $\Gam^*$ is
\[
   g^*(w) \, = \, \gcd(a,b) \ts .
\]
On the other hand, with $d:= ac-b^2$,
\[
  G^{-1} \, = \, \frac 1 d \left(\begin{matrix}
      c & -b\\-b & a \end{matrix} \right)
\]
is the transformation matrix expressing the dual basis in terms of the
original basis. In particular
\[
v^*_2 \, = \, \frac 1 d (-b v_1 + a v_2) \ts ,
\]
which implies that
\[
   t \, = \, \frac{d}{\gcd(a,b)} \ts .
\]
To compute the index of $\Gam_w$ in $\Gam$, we use the bases $v_1,v_2$
of $\Gam$ and $v_1,tv^*_2$ of $\Gam_w$. The corresponding
transformation matrix is $ \left(\begin{smallmatrix} 1 &
    -\frac{b}{d}t\\0 & \frac a d t \end{smallmatrix} \right) $, which
has determinant
\[
   \frac{a}{ d} \,  t \, = \,
   \frac {a} {d}\, \frac{d}{\gcd(a,b)} 
    \, = \, \frac{a}{g^*(w)} \ts ,
\]
as claimed.
\end{proof}

Since the vector $w$ was assumed primitive in $\Gam$, it is even true
that $g^*(w)$ is a divisor of the exponent of the factor group $\Gam^*
/ \Gam$. But from the primitivity of the bilinear form it follows that
this factor group is actually cyclic of order $d$, so its exponent is
equal to $d$, and we do not get an improvement: all divisors of the
discriminant $d$ can occur as a value $g^*(w)$.

It is easy to see that the quantity $g^*(w)$ only depends on an
appropriate coset of $w$; in fact, under the assumptions of the last
proposition, the coset modulo $d\Gam^*$ suffices. For purposes of
reference, we state this as an explicit remark.

\begin{remark}\label{rem:g-star}
  Under the assumptions of Proposition~\ref{prop:index-of-BRS}, let
  $w,w'$ be primitive such that\/ $w \equiv w' \pmod{d\Gam^*}$. Then,
  $g^*(w)=g^*(w')$.
\end{remark}

For explicit computations involving $g^*$, it is convenient to use a
basis corresponding to the elementary divisors of $\Gam$ in $\Gam^*$,
that is, a basis $\{v_1,v_2\}$ of $\Gam^*$ such that $\{ v_1, d v_2
\}$ is a basis of $\Gam$. The primitive vectors in $\Gam$ read $w =
mv_1 + ndv_2$ with $\gcd(m,n) = 1$. Using $g := \gcd(m,d)$, we can
rewrite this as $w=g \ts ( (m/g)v_1 + n(d/g)v_2 )$, where the coefficients
$m/g$ and $n(d/g)$ are coprime, in other words, $(m/g)v_1 + n(d/g)v_2$
is primitive in $\Gam^*$. This proves
\begin{equation}\label{eq:g-star}
   g^*(mv_1 + ndv_2) \, = \, \gcd(m,d) \ts ,
   \quad \text{ if } \gcd(m,n)=1 \ts .
\end{equation}
Notice that this formula again proves Remark~\ref{rem:g-star}.

For our application to well-rounded sublattices, we also have to
consider the parity of the index $[\Gam : \Gam_w ]$. For this, we need
the following refinement of Remark~\ref{rem:g-star}.

\begin{prop}\label{prop:parity-of-index}
  Under the assumptions of Proposition~$\ref{prop:index-of-BRS}$, let
  $w,w'$ be primitive such that $w \equiv w' \pmod{d\Gam^*}$ and $w
  \equiv w' \pmod{2 \Gam}$. Then,
  $[\Gam : \Gam_w ] \equiv [\Gam : \Gam_{w'} ] \pmod{2}$.
\end{prop}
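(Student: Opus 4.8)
The plan is to pass to the explicit parametrisation of primitive vectors by coprime pairs $(m,n)$ used in Appendix~\ref{app-sec:index}, translate the two congruence hypotheses into congruences on $m$ and on $n$, and then read off the parity of $[\Gam:\Gam_w]$ directly from the index formula of Proposition~\ref{prop:index-of-BRS}. Concretely, fix a basis $\{v_1,v_2\}$ of $\Gam^*$ with $\{v_1,dv_2\}$ a basis of $\Gam$, so that the primitive vectors of $\Gam$ are $w=mv_1+ndv_2$ with $\gcd(m,n)=1$ and $g^*(w)=\gcd(m,d)$ by~\eqref{eq:g-star}. Since $d\Gam^*=\langle dv_1,dv_2\rangle_\ZZ$ and the $v_2$-coordinate of $w-w'$, namely $d(n-n')$, is automatically a multiple of $d$, the condition $w\equiv w'\pmod{d\Gam^*}$ is equivalent to $m\equiv m'\pmod d$ and imposes no constraint on $n$; reading $w-w'$ in the basis $\{v_1,dv_2\}$ of $\Gam$ shows that $w\equiv w'\pmod{2\Gam}$ is equivalent to $m\equiv m'\pmod 2$ together with $n\equiv n'\pmod 2$. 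In particular $\gcd(m,d)=\gcd(m',d)=:g$, which already re-proves Remark~\ref{rem:g-star}; what is left to do is to control the \emph{numerator} of the index modulo $2g$.

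For this I would write $Q(m,n):=|mv_1+ndv_2|^2=\alpha m^2+2\beta mn+\gamma n^2$ with $\alpha:=(v_1,v_1)$, $\beta:=(v_1,dv_2)$ and $\gamma:=(dv_2,dv_2)$, all of them integers because $\Gam$ is integral; Proposition~\ref{prop:index-of-BRS} then gives $[\Gam:\Gam_w]=Q(m,n)/g$. As $g\mid m$, the first two terms of $Q(m,n)$ are divisible by $g$, and since the index is an integer $g$ must divide $\gamma n^2$ as well; from $\gcd(g,n)\mid\gcd(m,n)=1$ we get $g\mid\gamma$ (alternatively $g\mid d\mid\gamma$, the last divisibility because $(v_2,dv_2)\in\ZZ$). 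Hence
\[
  [\Gam:\Gam_w]\,=\,\alpha\,\frac{m^2}{g}\,+\,2\beta n\,\frac{m}{g}\,+\,\frac{\gamma}{g}\,n^2
\]
is a sum of three integers.

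Finally I would reduce this modulo $2$. The middle summand is even; writing $k:=m/g\in\ZZ$ one has $\tfrac{m^2}{g}-m=gk(k-1)$, which is even because $k(k-1)$ is, so $\alpha\tfrac{m^2}{g}\equiv\alpha m\pmod 2$; and $\tfrac{\gamma}{g}n^2\equiv\tfrac{\gamma}{g}n\pmod 2$. Thus $[\Gam:\Gam_w]\equiv\alpha m+\tfrac{\gamma}{g}n\pmod 2$, and the same computation for $w'$ yields $[\Gam:\Gam_{w'}]\equiv\alpha m'+\tfrac{\gamma}{g}n'\pmod 2$ with the \emph{same} coefficient $\gamma/g$, since $g=\gcd(m,d)=\gcd(m',d)$. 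Because $m\equiv m'\pmod 2$ and $n\equiv n'\pmod 2$, the two right-hand sides coincide, which is the claim.

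The argument is elementary throughout; the only points that need a little care are the correct translation of $w\equiv w'\pmod{d\Gam^*}$ into the single congruence $m\equiv m'\pmod d$ (one must notice it says nothing about $n$, which is why the separate hypothesis $w\equiv w'\pmod{2\Gam}$ is needed), and the divisibility $g\mid\gamma$, which I expect to be the main — though minor — obstacle and which I would obtain for free from the integrality of the index together with $\gcd(m,n)=1$.
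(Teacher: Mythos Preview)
Your argument is correct. The paper's proof is shorter and coordinate-free: it writes $w'=w+u$ with $u=du'$, $u'\in\Gam^*$, $u\in 2\Gam$, then expands
\[
  \frac{(w',w')}{g}\,=\,\frac{(w,w)}{g}+2\,\frac{d}{g}\,(w,u')+\frac{d}{g}\,(u,u')
\]
and observes that the middle term is even (it carries a factor $2$) and the last term is even because $(u,u')\in 2\ZZ$ (from $u\in 2\Gam$, $u'\in\Gam^*$). Both proofs rest on the same index formula $[\Gam:\Gam_w]=(w,w)/g^*(w)$ and on $g^*(w)=g^*(w')$; the difference is only that you pass through the elementary-divisor basis and the explicit quadratic form $Q(m,n)$, while the paper stays with the bilinear form. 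Your route has the advantage of yielding the explicit residue $[\Gam:\Gam_w]\equiv\alpha m+\tfrac{\gamma}{g}n\pmod 2$, which makes the dependence on the cosets of $m\bmod D$ and $n\bmod 2$ completely transparent and dovetails with how the result is actually used in Section~\ref{sub:rational}; the paper's route is quicker and avoids choosing a basis at all.
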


\begin{proof}
  The proof is of course based on Proposition~\ref{prop:index-of-BRS},
  taking into account that, under our assumptions, $g:= g^*(w) =
  g^*(w')$, by Remark~\ref{rem:g-star}. First of all, recall that $g$
  divides $d$.  Now, we write $w' = w + u = w + du'$ with $u' \in
  \Gam^*$ and $u \in 2 \Gam$, and we compute explicitly
\[
  \frac{(w',w')}{g} \, = \, \frac{(w,w)}{g} + 2\ts \frac{d}{g}(w,u') +
  \frac{d}{g}(u,u') \, \equiv \, \frac{(w,w)}{g} \pmod{2} \ts .
\]
Notice that the last inner product $(u,u')$ is indeed in $2\ZZ$, since
$u \in 2\Gam$ and $u' \in \Gam^*$.
\end{proof}

\section{Epstein's $\zeta$-function}\label{app-sec:epst}

For a quadratic form $Q(m,n)= a m^2 + 2b mn + c n^2$, the Epstein
$\zeta$-function is defined as
\begin{equation}
   \zeta^{}_Q(s) \, :=  \sum_{(m,n)\neq (0,0)} \frac{1}{Q(m,n)^s} \ts ,
\end{equation}
where the sum runs over all non-zero vectors $(m,n)\in\ZZ^2$.  The
series converges in the half plane $\{\RE(s)>1\}$. It has an analytic
continuation which is a meromorphic function in the whole complex
plane with a single simple pole at $s=1$ with residue
$\frac{\pi}{\sqrt{d}}$, where $d=ac-b^2$ as before;
see~\cite{KK,Siegel}.  It is closely connected to
\begin{equation}
  \zeta^{\mathsf{pr}}_Q(s)\, := \sum_{(m,n)=1} \frac{1}{Q(m,n)^s}
  \, = \,  \frac{1}{\zeta(2s)}\, \zeta^{}_Q(s) \ts ,
\end{equation}
where the sum runs over all pairs of integers that are relatively prime.
In the explicit summations, we now use $(m,n)$ instead of $\gcd(m,n)$.

In Section~\ref{sub:rational}, we need the sum
\begin{equation}
  \sum_{ \substack{ (m,n)=1 \\ (m,D)=k \\ (n,C)=\ell}} \frac{1}{Q(m,n)^s} \, ,
\end{equation}
where $C,D,k,\ell$ are some fixed positive integers with $k,\ell$
relatively prime. Using the Moebius $\mu$-function, we can express
\begin{equation}
  \sum_{\substack{ (m,n)=1 \\ (m,D)=k \\ (n,C)=\ell}} \frac{1}{Q(m,n)^s}
  \; = \sum_{\substack{ (m,n)=1 \\ (m,\ell D/k)=1 \\ (n,kC/\ell)=1}} 
    \frac{1}{Q(k m,\ell n)^s}
  \,  = \sum_{c\mid \frac{ \ell D}{k}} \mu(c) \; 
  \varphi^{}_Q \left(c\frac{kC}{\ell};c k,\ell;s\right)
\end{equation}
in terms of
\begin{equation}
  \varphi^{}_Q(a;k,\ell;s)\, := \sum_{\substack{ (m,n)=1 \\ (n,a)=1}}
  \frac{1}{Q(k m,\ell n)^s}\, .
\end{equation}
As $Q(m,n)$ is homogeneous of degree $2$, we have
\begin{equation}
  \varphi^{}_Q(a;k b,\ell b;s) \, = \,
    \frac{1}{b^{2s}}\, \varphi^{}_Q(a;k,\ell;s).
\end{equation}
Furthermore, observe that $\varphi^{}_Q(a;k,\ell;s)=
\varphi^{}_Q(b;k,\ell;s)$, whenever $a$ and $b$ have the same prime
factors. In particular, we may assume that $a$ is squarefree in the
following.  Using the same methods as above, we can derive the
following recursion
\begin{equation}
  \varphi^{}_Q(a;k,\ell;s) \, = 
  \sum_{b \mid a} \sum_{c \mid \frac{a}{b}} \mu(c) \ts
   \frac{1}{b^{2s}}\ts \varphi^{}_Q(b;k,c \ell;s)\ts ,
\end{equation}
where we have made use of the assumption that $a$ is squarefree and
employed the multiplicativity of $\mu$. This recursion has the
solution
\begin{equation}
   \varphi^{}_Q(a;k,\ell;s) \, = \, 
   \left( \prod_{p \mid a} \frac{1}{1-p^{-2s}} \right)
   \left( \sum_{b \mid a} \mu(b) \, \varphi^{}_Q(1;k,b \ell;s) \right),
\end{equation}
where the product is taken over all primes $p$ dividing $a$.  As
$\varphi^{}_Q(1;k,b\ell;s)$ is the primitive Epstein $\zeta$-function
$\zeta^{\mathsf{pr}}_{\tilde Q}(s)$ corresponding to the quadratic
form $\tilde Q(m,n)= Q( km, b\ell n)$, this shows that
$\varphi^{}_Q(a;k,\ell;s)$ and thus
\[
   \sum_{\substack{(m,n)=1 \\ (m,D)=k \\ (n,C)=\ell}} \frac{1}{Q(m,n)^s} 
\]
are sums of Epstein zeta functions, and thus are meromorphic functions
with a simple pole at $s=1$ and analytic elsewhere in
$\{\RE(s)>\frac{1}{2}\}$.

Alternatively, we can obtain this result by an application of
Theorem~3 in~\cite{Siegel}; see also \cite{KS}. Applied to our
situation, it states that
\begin{equation}
   \psi^{}_Q (D,C,i,j;s) \; :=
   \sum_{\substack{ m \equiv i (D) \\ n\equiv j (C)}} \frac{1}{Q(m,n)^s} 
\end{equation}
has an analytic continuation, which is analytic in the entire complex
plane except for a simple pole at $s=1$ with residue
$\frac{\pi}{\sqrt{\det (Q')}}$, where $Q'(m,n):= Q(Dm,Cn)$.  Using
methods similar to those in~\cite{BMP-visible,PleaHuck}, we first
observe for $k,\ell$ coprime
\begin{align*}
\sum_{\substack{ (m,n)=1 \\ (m,D)=k \\ (n,C)=\ell}} \frac{1}{Q(m,n)^s}\, 
& = \sum_{\substack{ (m,D)=k \\ (n,C)=\ell}} 
         \frac{1}{Q(m,n)^s} \sum_{r\mid (m,n)} \mu (r) \\
& = \sum_{r\in\NN } \mu(r) \frac{1}{r^{2s}} 
      \sum_{\substack{ (rm,D)=k \\ (rn,C)=\ell}} 
           \frac{1}{Q(m,n)^s}  \\[1mm]
& =  \sum_{u\mid k} \sum_{v \mid \ell} \sum_{\substack{ r\in\NN \\ (r,CD)=1}}
         \frac{\mu(uvr)}{(uvr)^{2s}} \sum_{\substack{ (uvrm,D)=k \\ 
         (uvrn,C)=\ell}} \frac{1}{Q(m,n)^s} \, .  
\end{align*}
As $r$ is coprime with $C$ and $D$ we see that
\begin{equation}
  \sum_{\substack{ (uvrm,D)=k \\ (uvrn,C)=\ell}} 
           \frac{1}{Q(m,n)^s}
  \; = \sum_{\substack{ (vm,D/u)=k/u \\ (un,C/v)=\ell/v}} 
           \frac{1}{Q(m,n)^s}
\end{equation}
is independent of $r$. Moreover, the latter sum can be written as a
(finite) sum of suitable functions of the form $\psi^{}_Q(D,C,i,j;s)$
and therefore it is analytic in the entire complex plane except for a
simple pole at $s=1$. As $u,v,r$ are coprime, $\mu(uvr)=
\mu(u)\mu(v)\mu(r)$, and hence the only remaining infinite sum
\begin{equation}
  \sum_{\substack{ r\in\NN \\ (r,CD)=1}} \frac{\mu(r)}{r^{2s}}
  \, = \, \frac{1}{\zeta(2s)} \prod_{p\mid CD} \frac{1}{1-p^{2s}}
\end{equation}
is analytic in $\{\RE(s)>\frac{1}{2}\}$, which again shows that
\[
  \sum_{ \substack{ (m,n)=1 \\ (m,D)=k \\ (n,C)=\ell}} \frac{1}{Q(m,n)^s} 
\]
is a meromorphic function with a simple pole at $s=1$ and analytic
elsewhere in $\{\RE(s)>\frac{1}{2}\}$.

\section*{Acknowledgements}

The authors thank S.~Akiyama, J.~Br{\"u}\-dern and L.~Fukshansky for
discussions, R.~Schulze-Pillot for bringing Siegel's work on Epstein's
zeta function to our attention, and S.~K{\"u}hn\-lein for
making~\cite{Kuehn} available to us prior to publication. This work
was supported by the German Research Foundation (DFG), within the CRC
701.


\begin{thebibliography}{99}

\bibitem{Apostol}
T.M.~Apostol, 
\textit{Introduction to Analytic Number Theory},
Springer, New York (1976).

\bibitem{Baake97}
M.~Baake, 
Solution of the coincidence problem in dimensions $d\leq 4$,
in \textit{The Mathematics of Long-Range Aperiodic Order}, ed. R.~V.~Moody,
Kluwer, Dordrecht (1997), 9--44; 
rev.\ version: \texttt{arXiv:math.MG/0605222}.

\bibitem{BMP-visible}
M.~Baake, R.V.~Moody and P.A.B.~Pleasants,
Diffraction from visible lattice points and $k$th power free integers,
\textit{Discr. Math.} \textbf{221} (200), 3--42;
\texttt{arXiv:math/9906132}.

\bibitem{Baake+Grimm}
M.~Baake and U.~Grimm,
Bravais colourings of planar modules with ${N}$-fold symmetry,
\textit{Z. Kristallogr.} \textbf{219} (2004), 72--80;
\texttt{arXiv:math.CO/0301021}.

\bibitem{BGHZ08}
M.~Baake, U.~Grimm, M.~Heuer and P.~Zeiner,
Coincidence rotations of the root lattice ${A}_4$,
\textit{Europ. J. Combin.} \textbf{29} (2008), 1808--1819;
\texttt{arXiv:0709.1341}.

\bibitem{BSZ11}
M.~Baake, R.~Scharlau and P.~Zeiner,
Similar sublattices of planar lattices,
\textit{Canad. J. Math.} \textbf{63} (2011), 1220--1237;
\texttt{arXiv:0908.2558}.

\bibitem{Borevic-Sh} 
I. Borevich and I. Shafarevich, 
\textit{Number Theory}, translated by N.~Greenleaf, 
Academic Press, New York (1966).

\bibitem{Bruedern}
J.~Br\"udern,
\textit{Einf\"uhrung in die analytische Zahlentheorie},
Springer, Berlin (1995).

\bibitem{Bush} 
C.J. Bushnell and I. Reiner, 
Zeta functions of arithmetic orders and Solomon's conjectures, 
\textit{Math.\ Z.} \textbf{173} (1980), 135--161.

\bibitem{ConSlo}
J.H.~Conway and N.J.A.~Sloane,
\textit{Sphere Packings, Lattices and Groups}, 3\textsuperscript{rd} ed.
Springer, New York (2010).

\bibitem{duSautoy} 
M.~du Sautoy and F.~Grunewald, 
Analytic properties of zeta functions and subgroup growth, 
\textit{Ann.\ Math.} \textbf{152} (2000), 793--833.

\bibitem{Fuksh1} 
L.~Fukshansky,
On distribution of well-rounded sublattices of $\ZZ^2$, 
\textit{J.\ Number Th.} \textbf{128} (2008), 2359--2393. 

\bibitem{Fuksh3} 
L.~Fukshansky,
On well-rounded sublattices of the hexagonal lattice,
\textit{Discr.\ Math.} \textbf{310} (2010), 3287--3302.

\bibitem{Fuksh2} 
L.~Fukshansky,
Well-rounded zeta-function of planar arithmetic lattices, 
\textit{Proc.\ AMS} \textbf{142} (2014), 369--380.

\bibitem{Grune} 
F. Grunewald, D. Segal and G.C. Smith, 
Subgroups of finite index in nilpotent groups, 
\textit{Invent.\ Math.} \textbf{93} (1988), 185--223.

\bibitem{Klemm}
M.~Klemm,
\textit{Symmetrien von Ornamenten und Kristallen},
Springer, Berlin (1982).

\bibitem{KK}
M.~Koecher and A.~Krieg,
\textit{Elliptische Funktionen und Modulformen},
Springer, Berlin (2007).

\bibitem{Kuehn}
S.~K{\"u}hnlein,
Well-rounded sublattices,
\textit{Int.\ J.\ Number Th.} \textbf{8} (2012), 1133--1144.

\bibitem{KS}
S.~K{\"u}hnlein and R.~Schwerdt,
Well-rounded sublattices and twisted Epstein zeta functions,
Preprint (2014).

\bibitem{Lub}
A.~Lubotzky and D.~Segal,
\textit{Subgroup Growth},
Birkh\"auser, Basel (2003).

\bibitem{Martinet}
J. Martinet,
\textit{Perfect Lattices in Euclidean Spaces},
Springer, Berlin (2010).

\bibitem{McMullen}
C. McMullen, 
Minkowski's conjecture, well-rounded lattices and topological dimension, 
\textit{J.\ Amer.\ Math.\ Soc.} \textbf{18} (2005), 711--734.

\bibitem{Moree}
P.~Moree,
Chebyshev's bias for composite numbers with restricted prime divisors,
\textit{Math.\ Comp.} \textbf{73} (2004), 425--449.

\bibitem{PleaHuck}
P.A.B.~Pleasants and C.~Huck,
Entropy and diffraction of the $k$-free points in $n$-dimensional lattices,
\textit{Discr.\ Comput. Geom.} \textbf{50} (2013), 39--68;
\texttt{arXiv:1112.1629}.

\bibitem{Siegel}
C.L.~Siegel,
\textit{Advanced Analytic Number Theory},
Tata, Bombay (1980).

\bibitem{Shimura} 
G.~Shimura, 
\textit{Introduction to the Arithmetic Theory of Automorphic Functions}, 
Iwanami Shoten, Tokyo, and Princeton University Press, Princeton, NJ (1971).

\bibitem{Sol}
L.~Solomon, 
Zeta functions and integral representation theory,
\textit{Adv.\ Math.\ } \textbf{26} (1977), 306--326.

\bibitem{Tenenbaum}
G.~Tenenbaum,
\textit{Introduction to Analytic and Probabilistic Number 
Theory}, Cambridge University Press, Cambridge (1995).

\bibitem{vanderWaerden}
B.L.~van der Waerden,
Die Reduktionstheorie der positiven quadratischen Formen,
\textit{Acta Math.} \textbf{96} (1956), 265--309.

\bibitem{Zagier}
D.B.~Zagier, 
\textit{Zetafunktionen und quadratische K\"orper}, 
Springer, Berlin (1981).

\bibitem{pzcsl2}
P.~Zeiner,
Coincidences of hypercubic lattices in 4 dimensions,
\textit{Z.\ Kristallogr.} \textbf{221} (2006), 105--114;
\texttt{arXiv:math/0605526}.

\bibitem{suppl}
P.~Zeiner, Supplement to ``Well-rounded sublattices of planar lattices'',
available from the author, or as supplement to the \texttt{arXiv}-version.

\bibitem{zou}
Y.M.~Zou,
Structures of coincidence symmetry groups,
\textit{Acta Cryst.\ A} \textbf{62} (2006), 109--114.

\end{thebibliography}
\end{document}